\newtheorem{theorem}{Theorem}[section]
\newtheorem{lemma}[theorem]{Lemma}
\newtheorem{proposition}[theorem]{Proposition}
\newtheorem{remark}[theorem]{Remark}
\newtheorem{corollary}[theorem]{Corollary}
\newtheorem{definition}[theorem]{Definition}
\begin{document}

\title{\Large \textbf{On certain Iwahori representations of unramified $U(2, 1)$ in characteristic $p$}}
\date{}
\author{\textbf{Peng Xu}}
\maketitle

\begin{abstract}
Let $F$ be a non-archimedean local field of odd residue characteristic $p$. Let $G$ be the unramified unitary group $U(2, 1)(E/F)$, and $K$ be a maximal compact open subgroup of $G$. For an $\overline{\mathbf{F}}_p$-smooth representation $\pi$ of $G$ containing a weight $\sigma$ of $K$, we follow the work of Hu (\cite{Hu12}) to attach $\pi$ a certain $I_K$-subrepresentation, where $I_K$ is the Iwahori subgroup in $K$. In terms of such an $I_K$-subrepresentation, we prove a sufficient condition for $\pi$ to be non-finitely presented. We determine such an $I_K$-subrepresentation explicitly, when $\pi$ is either a spherical universal Hecke module or an irreducible principal series.
\end{abstract}

\tableofcontents

\section{Introduction}
In the last fifteen years, the area of $p$-modular representations of $p$-adic reductive groups is in a period of vast development. The recent work of Abe--Henniart--Herzig--Vign$\acute{\text{e}}$ras (\cite{AHHV17}) and their forthcoming ones, generalizing \cite{Her2011b}, \cite{Abe-2011}, reduce the classification of irreducible admissible mod-$p$ representations of a $p$-adic reductive group to supersingular (i.e., supercuspidal) representations, which is similar to the classical work of Bernstein and Zelevinski on the classification of complex smooth representations of $GL_n$ (\cite{BZ1977}).

However, supersingular representations remain mysterious largely since Barthel and Livn$\acute{\text{e}}$ discovered them two decades ago, and the classifications are only understood for the group $GL_2 (\mathbf{Q}_p)$ (\cite{Breuil03}) and a few other groups closely related to it. In general, the work of Breuil and Pa$\check{\text{s}}$k$\bar{\text{u}}$nas (\cite{BP12}) shows that there are much more supersingular representations of $GL_2 (\mathbf{Q}_{p^f})$ ($f> 1$) than the two dimensional irreducible continuous mod-$p$ representations of the absolute Galois group $G_{\mathbf{Q}_{p^f}}$, and their method is to construct many supersingular representations in the $0$-th homology group of certain coefficient systems attached to the Bruhat--Tits tree of $SL_2$, where the recipes of coefficient systems in use come from the weight part of generalized Serre's conjecture (\cite{BDJ10}).

To analyze the Bruhat--Tits building of the group in consideration is then very useful; actually in most works mentioned above, a maximal compact induction and its associated spherical Hecke algebra play crucial roles. In \cite{Hu12}, Hu attached a diagram \footnote{Roughly speaking, a diagram is the restriction of a $G$-equivariant coefficient system to a fixed edge on the tree of $G$.} to an irreducible smooth representation (with central character) of $GL_2$, and he proved such a diagram determines the original representation uniquely. Hu has also determined his canonical diagrams explicitly in many important cases.

In the current paper, we follow Hu's idea to study an analogous problem for the unitary group $G= U(2, 1)(E/F)$ over a non-archimedean local field $F$ of odd residue characteristic $p$. Let $K$ be a maximal compact open subgroup of $G$, and $\sigma$ be an irreducible smooth representation of $K$ over $\overline{\mathbf{F}}_p$. By considering the Bruhat--Tits tree of $G$, the maximal compact induction $\textnormal{ind}^G _K \sigma$ is decomposed into a sum of $I_K$-representations
\begin{center}
$\text{ind}^G _K \sigma= I^+ (\sigma) \oplus I^- (\sigma)$,
\end{center}
where $I_K$ is the Iwahori subgroup in $K$. For a smooth representation $\pi$ of $G$ containing $\sigma$, we consider the intersection $I^{+} (\sigma, \pi)\cap I^{-} (\sigma, \pi)$ of the images of $I^+ (\sigma)$ and $I^- (\sigma)$ in $\pi$, which by definition is an $I_K$-subrepresentation of $\pi$. Such an $I_K$-subrepresentation is expected to contain important information of $\pi$.

The first main result proved is as follows:
\begin{theorem}\label{first main intro}(Theorem \ref{images of f_0 and f_1 give that of I_K})
Assume $\pi=\textnormal{ind}^G _K \sigma /(T-\lambda)$, for a $\lambda \in \overline{\mathbf{F}}_p$. Then the representation $I^{+} (\sigma, \pi)\cap I^{-} (\sigma, \pi)$ is two dimensional, with a canonical basis.
\end{theorem}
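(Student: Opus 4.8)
The plan is to convert the intersection into a single explicit finite computation on the tree of $G$ near the $I_K$-fixed edge. First I would fix the base vertex $x_0$ stabilised by $K$ and the edge $(x_0,x_1)$ stabilised by $I_K$, and recall the standard basis $\{[g,v]\}$ of $\textnormal{ind}^G _K\sigma$ indexed by the vertices in the $K$-orbit of $x_0$; the decomposition $\textnormal{ind}^G _K\sigma=I^+(\sigma)\oplus I^-(\sigma)$ is then the splitting of this basis according to which side of the edge $(x_0,x_1)$ the supporting vertex lies on. Writing $q$ for the quotient map onto $\pi$ and $\mathrm{pr}^{\pm}$ for the two $I_K$-equivariant projections, the key preliminary reduction I would establish is the identity
\[
 I^+(\sigma,\pi)\cap I^-(\sigma,\pi)=q\Big(\mathrm{pr}^+\big((T-\lambda)\,\textnormal{ind}^G _K\sigma\big)\Big),
\]
which follows formally from the direct-sum decomposition together with $\ker q=(T-\lambda)\,\textnormal{ind}^G _K\sigma$: an element lies in both images exactly when it is $q$ of the positive component of a kernel element.

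Second, I would carry out the local computation of the Hecke operator $T$ on this basis. For a vertex lying deep inside one half of the tree, every Hecke-neighbour lies in the same half, so $(T-\lambda)[g,v]$ belongs entirely to $I^+(\sigma)$ or entirely to $I^-(\sigma)$; in the first case $\mathrm{pr}^+$ returns it to the kernel and $q$ kills it, while in the second case $\mathrm{pr}^+$ annihilates it. Hence only the finitely many vertices adjacent to the central edge contribute, namely $x_0$ itself and the vertices of the $K$-orbit lying immediately beyond $x_1$. Feeding the explicit form of $T$ on the bi-regular tree of $U(2,1)$ into $\mathrm{pr}^+$ and then applying $q$ together with the relation $T=\lambda$ holding in $\pi$, I expect the resulting span to collapse onto the images of the two distinguished functions $f_0$ and $f_1$ supported at the two central vertices and built from the canonical one-dimensional subspace of $\sigma$ fixed by the pro-$p$ radical of $I_K$. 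This collapse, namely verifying that the boundary contributions are spanned by exactly $q(f_0)$ and $q(f_1)$ and no more, is where the real work and the main obstacle lie, since it requires the precise coset representatives, valences, and $\sigma$-equivariant coefficients of the Hecke action for $U(2,1)$ rather than the softer formalism of the first step.

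Finally, to see that the space is genuinely two-dimensional I would prove that $q(f_0)$ and $q(f_1)$ are linearly independent. This I would deduce from the injectivity of the natural map $\sigma\to\pi$, $v\mapsto q([1,v])$, the standard property that the bottom layer embeds in such a universal module quotient, which separates the two distinct supports $x_0$ and the vertex beyond $x_1$ and shows that neither image vanishes nor is a scalar multiple of the other. Combined with the spanning statement of the second step, this yields that $I^+(\sigma,\pi)\cap I^-(\sigma,\pi)$ has dimension exactly two, with the canonical basis $\{q(f_0),q(f_1)\}$.
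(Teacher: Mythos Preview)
Your overall framework coincides with the paper's: the identity
\[
I^{+}(\sigma,\pi)\cap I^{-}(\sigma,\pi)=q\bigl(\mathrm{pr}^{\pm}\bigl((T-\lambda)\,\textnormal{ind}^G_K\sigma\bigr)\bigr)
\]
is precisely Lemma~\ref{the image of phi_sigma} (the paper takes $\mathrm{pr}^-$, but the choice is immaterial). Where you diverge is in how the kernel is parametrised. The paper writes $(T-\lambda)\,\textnormal{ind}^G_K\sigma=\langle (T-\lambda)f_0\rangle_G$, uses the Iwahori decomposition to reduce to computing $\phi_\sigma(g\cdot(T-\lambda)f_0)$ for $g\in\{\alpha^n,\beta_K\alpha^n\}_{n\in\mathbb{Z}}$, and tracks these through the tree via Lemmas~\ref{transition} and~\ref{transition2}. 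Your route, parametrising the kernel \emph{linearly} by $(T-\lambda)[g,v]$ and observing from Proposition~\ref{image of the tree under T} that only $[g,v]\in R^+_0(\sigma)\oplus R^-_0(\sigma)$ survive the projection, is a legitimate and more local alternative: it trades the transition lemmas for the explicit cross-boundary piece of $T$ on all of $R^+_0\oplus R^-_0$ rather than on $f_0$ alone. In practice the latter is of comparable weight (for the $R^-_0$ piece one uses $T^-[u'\alpha,v]=[u',j_\sigma v]\in\langle f_0\rangle$, and a dual computation handles $R^+_0$), and it does collapse to $\langle\overline{f_0},\overline{f_1}\rangle$ as you anticipate.

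There is, however, a genuine gap in your final step. The injectivity of $\sigma\hookrightarrow\pi$ gives only $\overline{f_0}\neq 0$; it says nothing about $\overline{f_1}$, which lives in $R^-_0(\sigma,\pi)$ and not in the embedded copy of $\sigma$, and ``separating supports'' does not survive passage to the quotient. To show $\overline{f_0}$ and $\overline{f_1}$ are independent one must rule out $f_0-cf_1\in(T-\lambda)\,\textnormal{ind}^G_K\sigma$ for every $c$, and this genuinely requires the recursion of $T$ on the full $I_{1,K}$-invariant basis $\{f_n\}_{n\in\mathbb{Z}}$ (Proposition~\ref{hecke operator formula} and Corollary~\ref{f_0 and f_1 are l.d in quotient} in the paper), not merely the embedding of the bottom layer.
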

Here, the notation $T$ denotes certain spherical Hecke operator (see subsection \ref{subsection 2.2}). The representation $\pi$ considered in above theorem is usually called a spherical universal Hecke module, and such a representation plays a central role in the $p$-modular representation theory of $G$.

The second main result proved is the following:
\begin{theorem}\label{second main intro}(Theorem \ref{main})
Assume $\pi$ is an irreducible principal series. Then
 \begin{center}
 $I^{+} (\sigma, \pi)\cap I^{-} (\sigma, \pi)=\pi^{I_{1, K}}$,
 \end{center}
where $I_{1, K}$ is the pro-$p$-Sylow subgroup of $I_{K}$.
\end{theorem}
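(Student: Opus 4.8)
The plan is to deduce Theorem~\ref{main} from Theorem~\ref{images of f_0 and f_1 give that of I_K} by realizing $\pi$ as a quotient of a spherical universal Hecke module and then matching the resulting space against a direct computation of $\pi^{I_{1,K}}$. First I would let $\lambda\in\overline{\mathbf{F}}_p$ be the scalar by which $T$ acts on the line $\textnormal{Hom}_K(\sigma,\pi)$; since $\pi$ is an irreducible principal series one has $\lambda\neq 0$, and Frobenius reciprocity yields a surjection $\bar\phi\colon M\twoheadrightarrow\pi$ with $M=\textnormal{ind}^G_K\sigma/(T-\lambda)$. Because the canonical map $\textnormal{ind}^G_K\sigma\to\pi$ is $G$-equivariant and factors through $M$, we have $I^{\pm}(\sigma,\pi)=\bar\phi\big(I^{\pm}(\sigma,M)\big)$ and hence the identity
\[
I^{+}(\sigma,\pi)\cap I^{-}(\sigma,\pi)=\bar\phi\Big(I^{+}(\sigma,M)\cap\big(I^{-}(\sigma,M)+\ker\bar\phi\big)\Big),
\]
which isolates the one new ingredient beyond Theorem~\ref{images of f_0 and f_1 give that of I_K}: the interaction of the relations cutting out $\pi$, encoded by $\ker\bar\phi$, with the two half-images.

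Next I would compute $\pi^{I_{1,K}}$ by hand. Writing $\pi=\textnormal{Ind}^G_B\chi$ and using the Iwasawa decomposition $G=BK$, Mackey's theorem gives $\pi|_K\cong\textnormal{Ind}^{K}_{B\cap K}(\chi|_{B\cap K})$, so that $\pi^{I_{1,K}}\cong(\textnormal{Ind}^{\mathbb{G}}_{\mathbb{B}}\bar\chi)^{\mathbb{U}}$, where $\mathbb{G}$ is the finite reductive quotient of $K$, $\mathbb{B}=\mathbb{T}\mathbb{U}$ a Borel (with $\mathbb{U}$ the image of $I_{1,K}$ in $\mathbb{G}$), and $\bar\chi$ the character of $\mathbb{T}$ induced by $\chi$. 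As $G$ has relative rank one, the Weyl group of $\mathbb{G}$ has order two, so the Bruhat decomposition of $\mathbb{G}$ has exactly two cells $\mathbb{B}w\mathbb{U}$; since $\bar\chi$ is trivial on $\mathbb{B}\cap w\mathbb{U}w^{-1}$ for each of the two $w$ (this group being $\mathbb{U}$ for $w=1$ and trivial otherwise), each cell contributes a one-dimensional space, and we obtain $\dim_{\overline{\mathbf{F}}_p}\pi^{I_{1,K}}=2$. In particular $\pi^{I_{1,K}}$ has the same dimension as the intersection in Theorem~\ref{images of f_0 and f_1 give that of I_K}.

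With these in hand, I would prove the two inclusions. For the inclusion $\subseteq$, the images of the canonical functions $f_0,f_1$ of Theorem~\ref{images of f_0 and f_1 give that of I_K}, being built from the one-dimensional $I_{1,K}$-fixed line of $\sigma$, already lie in $\pi^{I_{1,K}}$; to rule out any further vector I would argue in the model $\textnormal{Ind}^G_B\chi$ that a function representing a class common to both half-images is, modulo $\ker\bar\phi$, concentrated on the $K$-orbit of the fixed edge and hence $I_{1,K}$-fixed. For the inclusion $\supseteq$, I would realize each of the two basis vectors of $\pi^{I_{1,K}}$ simultaneously as the image of an element of $I^{+}(\sigma)$ and of an element of $I^{-}(\sigma)$, moving the support of a function across the fixed edge of the Bruhat--Tits tree by means of the relation $T=\lambda$ together with the further relations contained in $\ker\bar\phi$.

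The main obstacle will be the case in which $\bar\phi$ is \emph{not} injective, that is, when the irreducible principal series is a proper quotient of $M$. There the image-of-intersection supplied by Theorem~\ref{images of f_0 and f_1 give that of I_K} may a priori drop below two dimensions while the intersection-of-images may a priori grow, both effects being governed by $\ker\bar\phi$ through the displayed identity. The crux is therefore to show that $\bar\phi(f_0)$ and $\bar\phi(f_1)$ stay linearly independent and that no extra vector is created in $I^{+}(\sigma,\pi)\cap I^{-}(\sigma,\pi)$; establishing this demands an explicit analysis on the tree of $U(2,1)$ of how the principal-series relations transport functions between the two half-trees, and it is precisely this bookkeeping that separates Theorem~\ref{main} from Theorem~\ref{images of f_0 and f_1 give that of I_K}.
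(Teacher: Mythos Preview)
Your outline conflates the three cases of ``irreducible principal series'' in the paper (characters, twists of Steinberg, and irreducible full inductions $\textnormal{Ind}^G_B\varepsilon$) into a single case, and the resulting mismatch is where the argument breaks. Your Mackey computation of $\pi^{I_{1,K}}$ presupposes $\pi=\textnormal{Ind}^G_B\chi$, which is only case~(iii); for a character (case~(i)) or the Steinberg (case~(ii)) one has $\dim\pi^{I_{1,K}}=1$, not $2$. Likewise the claim $\lambda\neq 0$ fails for the Steinberg: there the relevant weight is the Steinberg weight $\mathrm{st}$ and the kernel of $\textnormal{ind}^G_K\mathrm{st}\to St$ is $(T)\oplus\langle f_0+f_1\rangle_G$, so $\lambda=0$ and $\ker\bar\phi\neq 0$.

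More importantly, in case~(iii) the ``main obstacle'' you flag is vacuous: the map $\textnormal{ind}^G_K\sigma\to\textnormal{Ind}^G_B\varepsilon$ has kernel exactly $(T-\lambda)$ (this is the input from \cite{AHHV17} that the paper invokes), so $\bar\phi$ is an isomorphism and Theorem~\ref{images of f_0 and f_1 give that of I_K} already gives the answer. The only case requiring genuine extra work is the Steinberg, and there your displayed identity is correct but your proposed bookkeeping does not match the actual shape of the problem: one must show that the \emph{additional} generator $f_0+f_1$ of the kernel, under the $I_K$-map $\phi_\sigma$ of Lemma~\ref{the image of phi_sigma}, contributes nothing beyond $\overline{f_0}=-\overline{f_1}$. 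The paper does this by the same tree computation (Lemmas~\ref{transition}, \ref{transition2}) used for Proposition~\ref{I-space contained in I_1 for coker(P(T))}, applied to $g\cdot(c_{-1}f_{-1}+c_1(f_0+f_1))$ for $g$ ranging over Iwahori double-coset representatives. Your plan to ``argue in the model $\textnormal{Ind}^G_B\chi$'' cannot handle this case, since $St$ is a proper quotient of that model; you would need instead to work with the explicit kernel and the map $\phi_\sigma$ as the paper does.
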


Both theorems above are analogy of results of Hu on $GL_2$ (\cite{Hu12}). In the case of $GL_2$, such an Iwahori subrepresentation is the main ingredient in Hu's canonical diagram attached to $\pi$. In this paper, we don't define a diagram explicitly but only keep it in mind as a general guideline.

We also obtain some other partial result on the $I_K$-subrepresentation $I^{+} (\sigma, \pi)\cap I^{-} (\sigma, \pi)$:
\begin{proposition}\label{hu's criteria intro}(Proposition \ref{Hu's criteria})
If $\pi$ is finitely presented, then the $I_K$-subrepresentation $I^{+} (\sigma, \pi)\cap I^{-} (\sigma, \pi)$ is finite dimensional.
\end{proposition}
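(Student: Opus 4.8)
The plan is to reduce the statement to a finiteness property of a relation module and then to a support estimate on the tree of $G$. Write $\phi\colon\mathrm{ind}^G_K\sigma\to\pi$ for the canonical $G$-map adjoint to the inclusion $\sigma\hookrightarrow\pi$, set $W=\ker\phi$, and let $p^{+},p^{-}$ denote the projections attached to the $I_K$-stable decomposition $\mathrm{ind}^G_K\sigma=I^{+}(\sigma)\oplus I^{-}(\sigma)$, so that $I^{\pm}(\sigma,\pi)=\phi(I^{\pm}(\sigma))$. First I would record the elementary identity
\[
I^{+}(\sigma,\pi)\cap I^{-}(\sigma,\pi)=\phi\bigl(p^{+}(W)\bigr),
\]
valid for any $\pi$: if $\phi(a)=\phi(b)$ with $a\in I^{+}(\sigma)$ and $b\in I^{-}(\sigma)$, then $w:=a-b\in W$ satisfies $p^{+}(w)=a$, whence the image equals $\phi(p^{+}(w))$; conversely, for $w\in W$ one has $\phi(p^{+}(w))=-\phi(p^{-}(w))$, which lies in both $I^{+}(\sigma,\pi)$ and $I^{-}(\sigma,\pi)$. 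Thus it suffices to bound $\dim_{\overline{\mathbf{F}}_p}\phi(p^{+}(W))$, and in particular the surjectivity of $\phi$ is irrelevant.

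Next I would use finite presentation of $\pi$ to conclude that $W$ is finitely generated as a $G$-representation, this being the step where the definition of finite presentation set up earlier enters. Granting it, I would choose generators $w_1,\dots,w_n\in W$, each an element of $\mathrm{ind}^G_K\sigma$ supported on finitely many cosets $gK$, i.e.\ on a finite set of vertices of diameter at most $d$ in the tree $\mathcal{T}$ of $G$. Since $p^{+}$ and $\phi$ are linear and $W$ is spanned by the $G$-translates $g\cdot w_i$, the space $\phi(p^{+}(W))$ is spanned by the vectors $\phi\bigl(p^{+}(g\cdot w_i)\bigr)$ for $g\in G$ and $1\le i\le n$.

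The heart of the argument is then a trichotomy according to the position of $\mathrm{supp}(g\cdot w_i)$ relative to the base edge $e_0$ separating $\mathcal{T}$ into the two half-trees $\mathcal{T}^{+},\mathcal{T}^{-}$ underlying $I^{+}(\sigma),I^{-}(\sigma)$, so that $p^{+}$ is truncation to $\mathcal{T}^{+}$. If $\mathrm{supp}(g\cdot w_i)\subseteq\mathcal{T}^{+}$ then $p^{+}(g\cdot w_i)=g\cdot w_i$ and $\phi(p^{+}(g\cdot w_i))=g\cdot\phi(w_i)=0$; if $\mathrm{supp}(g\cdot w_i)\subseteq\mathcal{T}^{-}$ then $p^{+}(g\cdot w_i)=0$. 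In the remaining case the support straddles $e_0$, so any vertex of $\mathrm{supp}(g\cdot w_i)$ lying in $\mathcal{T}^{+}$ is joined to a vertex in $\mathcal{T}^{-}$ by a geodesic through $e_0$ of length at most $d$, and hence lies within distance $d$ of $e_0$. Therefore $\mathrm{supp}(p^{+}(g\cdot w_i))\subseteq B_d(e_0)\cap\mathcal{T}^{+}$, a finite set independent of $g$; so every straddling contribution lies in the space $V$ of functions in $\mathrm{ind}^G_K\sigma$ supported on $B_d(e_0)\cap\mathcal{T}^{+}$. As the tree is locally finite and $\sigma$ is finite-dimensional, $V$ is finite-dimensional, and we conclude $\phi(p^{+}(W))\subseteq\phi(V)$ is finite-dimensional, as desired.

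I expect the main obstacle to be the passage in the second step from finite presentation of $\pi$ to finite generation of the \emph{specific} relation module $W$: because compact inductions are not projective in characteristic $p$, one cannot simply invoke a Schanuel-type comparison of presentations, and one should instead argue directly from a chosen presentation, or first reduce to the $G$-subrepresentation of $\pi$ generated by $\sigma$, inside which the intersection $I^{+}(\sigma,\pi)\cap I^{-}(\sigma,\pi)$ already lives. A secondary point needing care, specific to $U(2,1)$, is to pin down that $p^{+}$ genuinely is truncation to a half-tree, which legitimizes the support estimate and rests on the explicit description of $I^{\pm}(\sigma)$ in terms of the building of $G$.
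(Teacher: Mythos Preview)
Your approach is essentially the same as the paper's. Both proofs rest on the identity $I^{+}(\sigma,\pi)\cap I^{-}(\sigma,\pi)=\phi\bigl(p^{\pm}(\ker\phi)\bigr)$ (the paper states it as Lemma~\ref{the image of phi_sigma}, using the projection to $I^{-}(\sigma)$ rather than $I^{+}(\sigma)$, which of course gives the same image up to sign), then take a finite generating set for the kernel supported in a bounded ball, and argue that for any $g\in G$ the projection $p^{\pm}(g\cdot w_i)$ either vanishes, equals $g\cdot w_i$ (hence dies under $\phi$), or is supported in a fixed finite-dimensional space near the base vertex.

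The only real difference is packaging: where you invoke the tree metric and a diameter bound to control the straddling case, the paper instead reduces via the Iwahori decomposition $G=\bigcup_{g\in\mathcal{M}} I_K g I_K$ with $\mathcal{M}=\{\alpha^{n},\beta_K\alpha^{n}\}_{n\in\mathbb{Z}}$ and then appeals to the explicit estimates of Lemma~\ref{action of G on tree} on $\alpha^{\pm n}\cdot R^{\pm}_k(\sigma)$ to see that $(\alpha^{n}f)^{+}$ and $(\alpha^{-n}f)^{-}$ land in $\bigoplus_{0\le k\le m}R_k(\sigma)$. Your geometric phrasing is arguably cleaner and makes the independence from the particular group more visible; the paper's computation has the advantage of reusing machinery already set up for the later explicit determinations in Section~\ref{section: main results}. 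Finally, your worry about the passage from finite presentation to finite generation of the specific kernel $W=R(\sigma,\pi)$ is a non-issue here: the paper \emph{defines} ``$\pi$ finitely presented'' to mean precisely that $R(\sigma,\pi)$ is a finitely generated $\overline{\mathbf{F}}_p[G]$-module (see the sentence preceding Proposition~\ref{Hu's criteria}), so no Schanuel-type comparison is needed.
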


This Proposition is an analogue of one direction of Hu's criteria for finite presentation of smooth representations of $GL_2 (F)$. Note that Hu's criteria has been crucially used in his work (for $F$ of positive characteristic) and Schraen's work (for quadratic extensions $F/\mathbf{Q}_p$) on non-finite presentation of supersingular representations of $GL_2 (F)$ (\cite{Hu12}, \cite{Sch15}).

\medskip

This paper is organized as follows. In section \ref{section: notation}, we fix notations, and recall (actually prove) some preliminary results. In the first part of section \ref{section: spheres}, we recall a natural splitting of the spherical Hecke operator $T$ and prove that it satisfies some good properties, and in the second part we exhaust certain computation on the tree of $G$. In early parts of section \ref{section: initial results on the I-reps}, we prove the $I_K$-subrepresentation $I^+ (\sigma, \pi)\cap I^- (\sigma, \pi)$ is always non-zero for certain $\pi$, and prove Proposition \ref{hu's criteria intro}, where in later parts of this section we record some technical results and prove conditionally that the representation $I^+ (\sigma, \pi)\cap I^- (\sigma, \pi)$ is independent of the choice of $\sigma$. In section \ref{section: main results}, we prove the main Theorems and some other related results.

\section{Notations and Preliminary results}\label{section: notation}

The first two subsections reproduce almost \cite[Section 2]{X2016}.
\subsection{Notations}
Let $F$ be a non-archimedean local field of odd residue characteristic $p$, with ring of integers $\mathfrak{o}_F$ and maximal ideal $\mathfrak{p}_F$, and let $k_F$ be its residue field of cardinality $q=p^f$. Fix a separable closure $F_s$ of $F$. Let $E$ be the unramified quadratic extension of $F$ in $F_s$. We use similar notations $\mathfrak{o}_E$, $\mathfrak{p}_E$, $k_E$ for analogous objects of $E$, and we denote by $E^1$ the norm $1$ subgroup of $E^\times$. Let $\varpi_{E}$ be a uniformizer
of $E$, lying in $F$. Given a 3-dimensional vector space $V$ over $E$, we identify it with $E^{3}$, by fixing a basis of $V$.
Equip $V$ with the non-degenerate Hermitian form h:
\begin{center}
 $\text{h}:~V \times V \rightarrow E$,
$(v_{1}, v_{2}) \mapsto ~v_1 ^{\text{T}}\beta \overline{v_2}, v_1, v_2\in V$.
\end{center}
Here, $-$ denotes the non-trivial Galois conjugation on $E/F$, inherited by $V$, and
$\beta$ is the matrix
\[ \begin{matrix}\begin{pmatrix} 0  & 0 & 1  \\ 0  & 1 & 0\\
1 & 0 & 0
\end{pmatrix}
\end{matrix}. \]
The unitary group $G$ is the subgroup of $GL(3, E)$ whose elements fix the Hermitian form h:

\begin{center}
$G=\{g\in \text{GL}(3, E)\mid \text{h}(gv_{1}, gv_{2})= \text{h}(v_{1}, v_{2}), \text{for~any}~v_{1}, v_{2}~\in V\}.$
\end{center}

Let $B=HN$ (resp, $B'= HN'$) be the subgroup of upper (resp, lower) triangular matrices of $G$, where $N$ (resp, $N'$) is the unipotent radical of $B$ (resp, $B'$) and $H$ is the diagonal subgroup of $G$. Denote an element of the following form in $N$ and $N'$ by $n(x, y)$ and $n'(x, y)$ respectively:
\begin{center}
$\begin{pmatrix}  1 & x & y  \\ 0 & 1 & -\bar{x}\\
0 & 0 & 1
\end{pmatrix}$, ~
$\begin{pmatrix}  1 & 0 & 0   \\ x & 1 & 0\\
y & -\bar{x} & 1
\end{pmatrix}$
\end{center}
where $(x, y)\in E^2$ satisfies $x\bar{x}+ y+ \bar{y}=0$. Denote by $N_k$ (resp, $N'_k$), for any $k\in \mathbb{Z}$, the subgroup of $N$ (resp, $N'$) consisting of $n(x, y)$ (resp, $n'(x, y)$) with $y\in \mathfrak{p}^{k}_E$. For $x\in E^\times$, denote by $h(x)$ an element in $H$ of the following form:
\begin{center}
$\begin{pmatrix}  x & 0 & 0  \\ 0 & -\bar{x}x^{-1} & 0\\
0 & 0 & \bar{x}^{-1}
\end{pmatrix}$
\end{center}

We record the following useful identity in $G$: for $y\in E^\times$,
\begin{equation}\label{useful identity}
\beta n(x, y)= n(\bar{y}^{-1}x, y^{-1})\cdot h(\bar{y}^{-1})\cdot n'(-\bar{y}^{-1}\bar{x}, y^{-1}).
\end{equation}

\medskip
Up to conjugacy, the group $G$ has two maximal compact open subgroups $K_0$ and $K_1$ (\cite{Hij63}, \cite[Section 2.10]{Tits79}), which are given by:
\begin{center}
$K_0= \begin{pmatrix}  \mathfrak{o}_E & \mathfrak{o}_E & \mathfrak{o}_E  \\ \mathfrak{o}_E  & \mathfrak{o}_E & \mathfrak{o}_E\\
\mathfrak{o}_E & \mathfrak{o}_E & \mathfrak{o}_E
\end{pmatrix}\cap G, ~K_1= \begin{pmatrix}  \mathfrak{o}_E & \mathfrak{o}_E & \mathfrak{p}^{-1}_E  \\ \mathfrak{p}_E  & \mathfrak{o}_E & \mathfrak{o}_E\\
\mathfrak{p}_E & \mathfrak{p}_E & \mathfrak{o}_E
\end{pmatrix}\cap G$
\end{center}
The maximal normal pro-$p$ subgroups of $K_0$ and $K_1$ are respectively:

$K^1 _0= 1+\varpi_E M_3 (\mathfrak{o}_E)\cap G, ~K^1 _1= \begin{pmatrix}  1+\mathfrak{p}_E & \mathfrak{o}_E & \mathfrak{o}_E  \\ \mathfrak{p}_E  & 1+\mathfrak{p}_E & \mathfrak{o}_E\\
\mathfrak{p}^2_E & \mathfrak{p}_E & 1+\mathfrak{p}_E
\end{pmatrix}\cap G$

Let $\alpha$ be the following diagonal matrix in $G$:
\[ \begin{matrix}\begin{pmatrix} \varpi_{E}^{-1}  & 0 & 0  \\ 0  & 1 & 0\\
0 & 0 & \varpi_{E}
\end{pmatrix}
\end{matrix} ,\]
and put $\beta'=\beta \alpha^{-1}$. Note that $\beta\in K_0$ and $\beta'\in K_1$. We use $\beta_K$ to denote the unique element in $K\cap \{\beta, \beta'\}$.

Let $K\in \{K_0, K_1\}$, and $K^1$ be the maximal normal pro-$p$ subgroup of $K$. We identify the finite group $\Gamma_K= K/K^1$ with the $k_F$-points of an algebraic group defined over $k_F$, denoted also by $\Gamma_K$: when $K$ is $K_0$, $\Gamma_K$ is $U(2, 1)(k_E /k_F)$, and when $K$ is $K_1$, $\Gamma_K$ is $U(1, 1)\times U(1)(k_E /k_F)$. Let $\mathbb{B}$ (resp, $\mathbb{B}'$) be the upper (resp, lower) triangular subgroup of $\Gamma_K$, and $\mathbb{U}$ (resp, $\mathbb{U}'$) be its unipotent radical. The Iwahori subgroup $I_K$ (resp, $I'_K$) and pro-$p$ Iwahori subgroup $I_{1,K}$ (resp, $I' _{1,K}$) in $K$ are the inverse images of $\mathbb{B}$ (resp, $\mathbb{B}'$) and $\mathbb{U}$ (resp, $\mathbb{U}'$) in $K$. We have the following Bruhat decomposition for $K$:
 \begin{center}
 $K= I_K \cup I_K \beta_K I_K$.
 \end{center}
\medskip

\medskip
All the representations of $G$ and its subgroups considered in this paper are smooth over $\overline{\mathbf{F}}_p$.

\subsection{The spherical Hecke algebra $\mathcal{H}(K, \sigma)$}\label{subsection 2.2}
 Let $K$ be a maximal compact open subgroup of $G$, and $(\sigma, W)$ be an irreducible smooth representation of $K$. As $K^1$ is pro-$p$ and normal, $\sigma$ factors through the finite group $\Gamma_K= K/K^1$, i.e., $\sigma$ is the inflation of an irreducible representation of $\Gamma_K$. Conversely, any irreducible representation of $\Gamma_K$ inflates to an irreducible smooth representation of $K$. We may therefore identify irreducible smooth representations of $K$ with irreducible representations of $\Gamma_K$, and we shall call them \emph{weights} of $K$ or $\Gamma_K$ from now on.

  It is known that $\sigma^{I_{1,K}}$ and $\sigma_{I'_{1,K}}$ are both one-dimensional, and that the natural composition map $\sigma^{I_{1,K}}\hookrightarrow \sigma \twoheadrightarrow \sigma_{I'_{1,K}}$ is non-zero, i.e., an isomorphism of vector spaces (\cite[Theorem 6.12]{C-E2004}). Denote by $j_\sigma$ the inverse of the composition map just mentioned. For $v\in \sigma^{I_{1,K}}$, we have $j_\sigma (\bar{v})= v$, where $\bar{v}$ is the image of $v$ in $\sigma_{I'_{1,K}}$. When viewed as a map in $\text{Hom}_{\overline{\mathbf{F}}_p}(\sigma, \sigma^{I_{1, K}})$, the $j_\sigma$ factors through $\sigma_{I'_{1,K}}$, i.e., it vanishes on $\sigma (I'_{1,K})$.

\begin{remark}\label{value of lambda}
There is a unique constant $\lambda_{\beta_K, \sigma}\in \overline{\mathbf{F}}_p$, such that $\beta_K \cdot v-\lambda_{\beta_K, \sigma}v\in \sigma (I'_{1,K})$, for $v\in \sigma^{I_{1,K}}$. The value of $\lambda_{\beta_K, \sigma}$ is known: it is zero unless $\sigma$ is a character (\cite[Proposition 3.16]{H-V2011}), due to the fact that $\beta_K\notin I_K\cdot I'_K$. When $\sigma$ is a character, $\lambda_{\beta_K, \sigma}$ is just the scalar $\sigma (\beta_K)$.
\end{remark}

\begin{remark}\label{n_K and m_K}
There are unique integers $n_K$ and $m_K$ such that $N\cap I_{1, K}= N_{n_K}$ and $N'\cap I_{1, K}=N'_{m_K}$. Note that $n_K +m_K= 1$.
\end{remark}

 Let $\text{ind}_K ^{G}\sigma$ be the compactly induced smooth representation, i.e., the representation of $G$ with underlying space $S(G, \sigma)$
\begin{center}
$S(G, \sigma)=\{f: G\rightarrow W\mid  f(kg)=\sigma (k)\cdot f(g),~\text{for~any}~k\in K~\text{and}~g\in G,~ \text{locally~constant~with~compact~support}\}$
\end{center}
and $G$ acting by right translation.

As usual (\cite[section 2.3]{B-L95}), denote by $[g, v]$ the function in $S(G, \sigma)$, supported on $Kg^{-1}$ and having value $v\in W$ at $g^{-1}$.  An element $g'\in G$ acts on the function $[g, v]$ by $g'\cdot[g, v]= [g'g, v]$, and we have $[gk, v]= [g, \sigma(k)v]$ for $k\in K$.

The spherical Hecke algebra $\mathcal{H}(K, \sigma)$ is defined as $\text{End}_G (\text{ind}^G _K \sigma)$, and by \cite[Proposition 5]{B-L95} it is isomorphic to the convolution algebra $\mathcal{H}_K (\sigma)$ of all compactly support and locally constant functions $\varphi$ from $G$ to $\text{End}_{\overline{\mathbf{F}}_p}(\sigma)$, satisfying $\varphi(kgk')=\sigma(k)\varphi(g)\sigma(k')$ for any $g\in G$ and $k, k'\in K$. Let $\varphi$ be the function in $\mathcal{H}_K (\sigma)$, supported on $K\alpha K$, and satisfying $\varphi (\alpha)= j_\sigma$. Denote by $T$ the Hecke operator in $\mathcal{H}(K, \sigma)$, which corresponds to the function $\varphi$, via the aforementioned isomorphism between $\mathcal{H}_K (\sigma)$ and $\mathcal{H}(K, \sigma)$. We refer the readers to \cite[(4)]{X2016} for a formula of $T$.

The following proposition is a special case of \cite[Corollary 1.3]{Her2011a}.

\begin{proposition}
The algebra $\mathcal{H}(K, \sigma)$ is isomorphic to $\overline{\mathbf{F}}_p [T]$.
\end{proposition}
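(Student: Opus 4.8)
The statement is a special case of \cite[Corollary 1.3]{Her2011a}, but I would prove it directly, following the strategy of Barthel--Livn\'e adapted to the mod-$p$ setting. I work throughout in the convolution model $\mathcal{H}_K(\sigma)$. The starting point is the Cartan decomposition $G=\bigsqcup_{n\geq 0}K\alpha^n K$, which holds because $G$ has semisimple $F$-rank $1$, with $\alpha$ generating the monoid of dominant cocharacters; here one uses that $\beta_K\in K$ represents the non-trivial relative Weyl element and satisfies $\beta_K\alpha^n\beta_K^{-1}=\alpha^{-n}$, so that $K\alpha^n K=K\alpha^{-n}K$ and the index set may be taken to be $n\geq 0$. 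Since every $\varphi\in\mathcal{H}_K(\sigma)$ is compactly supported, it decomposes as a finite sum $\varphi=\sum_{n\geq 0}\varphi_n$ with $\varphi_n$ supported on the single double coset $K\alpha^n K$, so $\mathcal{H}_K(\sigma)=\bigoplus_{n\geq 0}\mathcal{H}_K(\sigma)_n$, where $\mathcal{H}_K(\sigma)_n$ denotes the space of functions supported on $K\alpha^n K$. The proposition will follow once I establish the two claims below.

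The crux is the estimate $\dim_{\overline{\mathbf{F}}_p}\mathcal{H}_K(\sigma)_n\leq 1$ for every $n$. Such a $\varphi_n$ is determined by the single value $\varphi_n(\alpha^n)\in\text{End}_{\overline{\mathbf{F}}_p}(\sigma)$, and the bi-equivariance condition forces $\varphi_n(\alpha^n)$ to intertwine the two actions of $K_n:=K\cap\alpha^n K\alpha^{-n}$ on $\sigma$, namely the natural one and the one twisted by conjugation by $\alpha^{-n}$. For $n\geq 1$ the subgroup $K_n$ contains the full upper unipotent part $N\cap K$, whose image in $\Gamma_K$ is $\mathbb{U}$; conjugating by $\alpha^{-n}$ pushes $N\cap K$ into $1+\varpi_E M_3(\mathfrak{o}_E)\cap G=K^1$, on which $\sigma$ is trivial. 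Hence equivariance forces the image of $\varphi_n(\alpha^n)$ into $\sigma^{\mathbb{U}}=\sigma^{I_{1,K}}$, a one-dimensional line. Running the symmetric analysis at $\alpha^{-n}=\beta_K\alpha^n\beta_K^{-1}$ (the same double coset) squeezes the lower unipotent into $K^1$ and forces $\varphi_n(\alpha^n)$ to vanish on $\sigma(I'_{1,K})$, so that it factors through the one-dimensional quotient $\sigma_{I'_{1,K}}$. A non-zero such map is thus a scalar multiple of $j_\sigma$ transported by $\alpha^n$, whence the bound. \emph{This one-dimensionality is exactly the characteristic-$p$ phenomenon that makes the argument work, and controlling how $\alpha$ moves the two unipotent radicals into $K^1$ is the step I expect to require the most care.}

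Finally I verify that $T^n$ is supported on $\bigsqcup_{0\leq m\leq n}K\alpha^m K$ and that its top component $(T^n)_n\in\mathcal{H}_K(\sigma)_n$ is non-zero; this is a direct computation using the explicit formula for $T$ (\cite[(4)]{X2016}) and the convolution product, the point being that the coefficient attached to $K\alpha^n K$ is a power of the isomorphism $j_\sigma$ and so does not vanish in characteristic $p$. Granting this, $(T^n)_n\neq 0$ together with the estimate $\dim\mathcal{H}_K(\sigma)_n\leq 1$ gives $\mathcal{H}_K(\sigma)_n=\overline{\mathbf{F}}_p\cdot(T^n)_n$, so each graded piece is exactly one-dimensional and is hit by $T^n$. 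The family $\{T^m\}_{m\geq 0}$ is then triangular with respect to the support filtration, hence a basis of $\mathcal{H}_K(\sigma)=\bigoplus_{n\geq 0}\mathcal{H}_K(\sigma)_n$; equivalently the map $\overline{\mathbf{F}}_p[X]\to\mathcal{H}(K,\sigma)$, $X\mapsto T$, is an isomorphism.
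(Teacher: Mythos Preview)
Your proposal is correct, and in fact goes well beyond what the paper does: the paper gives no argument at all for this proposition, simply recording it as a special case of \cite[Corollary~1.3]{Her2011a}. You acknowledge the same citation and then supply the standard Barthel--Livn\'e/Herzig direct proof via the Cartan decomposition, the one-dimensionality of each graded piece $\mathcal{H}_K(\sigma)_n$, and the triangularity of $\{T^m\}_{m\geq 0}$ with respect to the support filtration. All three ingredients are handled correctly; the only place requiring care, as you yourself flag, is checking for \emph{both} choices $K\in\{K_0,K_1\}$ that $\alpha^{-n}(N\cap K)\alpha^{n}\subset K^1$ and $\alpha^{n}(N'\cap K)\alpha^{-n}\subset K^1$ for $n\geq 1$ (your parenthetical ``$=K^1$'' literally matches only $K_0^1$, but the inclusion holds in either case). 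Since the paper offers nothing to compare against beyond the citation, your write-up is strictly more informative.
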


\subsection{A canonical set of generators for $\sigma$}

Recall the following Iwahori decomposition
\begin{equation}\label{iwahori decom in K}
K = I_K\bigcup_{u\in N_{n_K}/N_{n_K +1}} [u]\beta_K I_K,
\end{equation}
where $[u]$ denotes a representative of $u \in N_{n_K}/N_{n_K +1}$ in $N_{n_K}$.

\medskip
By the above \eqref{iwahori decom in K}, a set of representatives for the coset space $K/ I_K$ is given by
\begin{center}
$\{Id \} \cup \{[u]\beta_K\mid u\in  N_{n_K}/N_{n_K +1}\}$.
\end{center}
Let $v_0$ be a non-zero vector in the line $\sigma^{I_{1, K}}$. The vector $v_0$ generates the weight $\sigma$. Thus, the set
\begin{center}
$\{v_0\} \cup \{u\beta_K v_0\mid u\in  N_{n_K}/N_{n_K +1}\}$,
\end{center}
or alternatively the set
\begin{center}
$\{\beta_K v_0\} \cup \{\beta_K u\beta_K v_0\mid u\in  N_{n_K}/N_{n_K +1}\}$
\end{center}
spans the underlying space of $\sigma$.

However, we may do a little better:
\begin{lemma}\label{generators of sigma}
The set
\begin{center}
$\{u\beta_K v_0\mid u\in  N_{n_K}/N_{n_K +1}\}$,
\end{center}
or equivalently the set
\begin{center}
$\{\beta_K u\beta_K v_0\mid u\in  N_{n_K}/N_{n_K +1}\}$
\end{center}
spans the underlying space of $\sigma$.
\end{lemma}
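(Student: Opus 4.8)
The plan is to prove that the first set spans $\sigma$; the statement for the second set then follows formally. Indeed, $\beta_K$ acts on $\sigma$ as an invertible linear operator and sends each vector $u\beta_K v_0$ to $\beta_K u\beta_K v_0$, so it carries the first set bijectively onto the second; consequently their spans have the same dimension, and one equals $\sigma$ precisely when the other does. Thus I would reduce the lemma to showing that $S:=\langle\, u\beta_K v_0 : u\in N_{n_K}/N_{n_K+1}\,\rangle$ is all of $\sigma$. Here I would exploit the spanning statement already recorded above: the larger set $\{v_0\}\cup\{u\beta_K v_0\}$ generates $\sigma$, so $\sigma=S+\overline{\mathbf{F}}_p v_0$, and it suffices to prove $v_0\in S$.

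The key observation is that $S$ is stable under the finite $p$-group $\mathbb{U}=N_{n_K}/N_{n_K+1}$, the image of $N\cap K$ in $\Gamma_K$: for $\bar u_0\in\mathbb{U}$ one has $\bar u_0\cdot(\bar u\beta_K v_0)=(\bar u_0\bar u)\beta_K v_0$, so the $\mathbb{U}$-action merely permutes the chosen generators. Since $\beta_K v_0\neq 0$, the space $S$ is a nonzero $\overline{\mathbf{F}}_p$-representation of a $p$-group, and therefore has nonzero invariants, i.e.\ $S^{\mathbb{U}}\neq 0$. As $S^{\mathbb{U}}\subseteq\sigma^{\mathbb{U}}=\sigma^{I_{1,K}}=\overline{\mathbf{F}}_p v_0$, this forces $v_0\in S$, whence $S=\sigma$.

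The points that need care, rather than any genuine obstacle, are the well-definedness of the generator $u\beta_K v_0$ as a function of the coset of $u$ in $N_{n_K}/N_{n_K+1}$ --- which is exactly the content of the Iwahori decomposition \eqref{iwahori decom in K} indexing the cosets by $\mathbb{U}$ --- and the identification $\sigma^{\mathbb{U}}=\sigma^{I_{1,K}}$, valid because $I_{1,K}$ surjects onto $\mathbb{U}$ with kernel $K^1$, which acts trivially on $\sigma$. With these granted, the decisive ingredient is only the vanishing-invariants property of $p$-groups in characteristic $p$; no explicit knowledge of the weights of $U(2,1)(k_E/k_F)$ or $U(1,1)\times U(1)(k_E/k_F)$ is required. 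It is worth noting that the cruder approach of averaging, i.e.\ evaluating the norm element $\sum_{u\in\mathbb{U}}u$ on $\beta_K v_0$, would land in $\sigma^{\mathbb{U}}$ but typically vanishes in characteristic $p$; it is precisely the existence, rather than an explicit construction, of a nonzero fixed vector that drives the argument.
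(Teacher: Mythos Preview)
Your proof is correct and takes a genuinely different route from the paper's own argument. The paper splits into cases: when $\sigma$ is a character the claim is trivial, and when $\dim\sigma>1$ it invokes Remark~\ref{value of lambda} (that $\lambda_{\beta_K,\sigma}=0$, i.e.\ $\beta_K v_0\in\sigma(I'_{1,K})$) and then shows that the subspace $\sigma(I'_{1,K})$ is contained in the span of $\{\beta_K u\beta_K v_0\}$, using that this span is $I'_{1,K}$-stable and that $\beta_K v_0$ is $I'_{1,K}$-fixed. Your argument avoids both the case distinction and the input from Remark~\ref{value of lambda}: you simply observe that $S$ is a nonzero $\mathbb{U}$-subrepresentation, hence has a nonzero $\mathbb{U}$-fixed vector, which is forced to be a multiple of $v_0$. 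This is cleaner and more uniform; the paper itself notes (in the remark immediately following the lemma) that an alternative proof via \cite[Lemma~2.10]{Hu12} is available, and your argument is essentially in that spirit. What the paper's approach buys is an explicit identification of \emph{where} the missing vector $\beta_K v_0$ sits (namely in $\sigma(I'_{1,K})$), which connects to the structure of coinvariants used elsewhere in the paper; your approach trades that for economy.
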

\begin{proof}
If the weight $\sigma$ is a one-dimensional character, the statement is clear. Assume $\text{dim}~\sigma > 1$. the statement follows from the preceding remark and Remark \ref{value of lambda}. More precisely, due to the fact $\beta_K \notin I_K\cdot I'_K$, we have:
\begin{center}
$\beta_K v_0 \in \sigma (I'_{1, K})$.
\end{center}
Note that $I'_{1, K} =(\beta_K N_{n_K} \beta_K) K^1$, and that the group $I'_{1, K}$ fixes $\beta_K v_0$. We have immediately that $\sigma (I'_{1, K})$ is contained in the subspace of $\sigma$ spanned by the set
\begin{center}
$\{\beta_K u\beta_K v_0\mid u\in  N_{n_K}/N_{n_K +1}\}$.
\end{center}
We are done.
\end{proof}

\begin{remark}
One can also prove the Lemma by applying \cite[Lemma 2.10]{Hu12} to our case.
\end{remark}

\begin{remark}
When $\sigma$ is the Steinberg weight, its dimension is exactly the order of the coset space $N_{n_K}/N_{n_K +1}$, and the Lemma gives a \textbf{canonical basis} of it.
\end{remark}

\subsection{The space $(\textnormal{ind}_K ^G \sigma)^{I_{1, K}}$ and its image in $\textnormal{ind}^G _K \sigma / (T-\lambda)$}

We fix a non-zero vector $v_0\in \sigma^{I_{1, K}}$. Let $f_n$ be the function in $(\textnormal{ind}_K ^G \sigma)^{I_{1, K}}$, supported on $K \alpha^{-n} I_{1, K}$, such that
\begin{center}
$f_n (\alpha^{-n})=  \begin{cases}
\beta_K\cdot v_0, ~~~~~~~n>0,\\
v_0 ~~~~~n\leq 0.
\end{cases}$
\end{center}
Then, we have the following (\cite[Lemma 3.5]{X2016})

\begin{lemma}

The set of functions $\{f_n \mid n\in \mathbb{Z}\}$ consists of a basis of the $I_{1, K}$-invariants of the maximal compact induction $\textnormal{ind}^{G} _K\sigma$.

\end{lemma}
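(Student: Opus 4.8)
The plan is to analyze the $I_{1,K}$-invariant functions one $(K,I_{1,K})$-double coset at a time. As a vector space, $\textnormal{ind}^G_K\sigma$ is the direct sum over $Kg\in K\backslash G$ of copies of $W$, and for a single double coset $KgI_{1,K}$ the space of $I_{1,K}$-invariant functions supported on it is canonically isomorphic to the fixed space $W^{\,K\cap gI_{1,K}g^{-1}}$, via $f\mapsto f(g)$; indeed, well-definedness of such an $f$ forces its value $f(g)$ to be fixed by $\sigma(k)$ for every $k=gig^{-1}\in K\cap gI_{1,K}g^{-1}$ with $i\in I_{1,K}$. Thus the proof reduces to (i) describing the double coset space $K\backslash G/I_{1,K}$, and (ii) computing these fixed spaces. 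Since the $f_n$ are by construction supported on the double cosets $K\alpha^{-n}I_{1,K}$, once (i) shows these are pairwise distinct the $f_n$ are automatically linearly independent, having disjoint supports.

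For the double coset decomposition I would start from the Cartan decomposition $G=\bigsqcup_{m\ge 0}K\alpha^m K$ together with the Bruhat decomposition $K=I_K\cup I_K\beta_K I_K$ of \eqref{iwahori decom in K}, refined to $I_{1,K}$. The element $\alpha$ translates along the standard apartment of the tree of $G$, and $\beta_K$ realizes the reflection carrying $\alpha^m$ to $\alpha^{-m}$; because $\beta_K\notin I_K\cdot I'_K$ (Remark \ref{value of lambda}), the cosets $K\alpha^{-n}I_{1,K}$ for $n\in\mathbb Z$ are pairwise distinct, the index $n$ recording signed distance along the apartment. The key computation is then the conjugate $K\cap\alpha^{-n}I_{1,K}\alpha^{n}$. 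Writing $\alpha=\mathrm{diag}(\varpi_E^{-1},1,\varpi_E)$ and tracking how conjugation scales each off-diagonal entry, one checks that for $n\le 0$ this group reduces modulo $K^1$ to the upper unipotent $\mathbb U$, while for $n\ge 1$ it reduces to the lower unipotent $\mathbb U'$. Since $\sigma^{\mathbb U}=\sigma^{I_{1,K}}$ is the line spanned by $v_0$, and $\beta_K v_0$ spans $\sigma^{\mathbb U'}$ (because $\beta_K\mathbb U\beta_K^{-1}=\mathbb U'$), each of these double cosets supports a one-dimensional space of invariants, spanned precisely by $f_n$ with the prescribed value $v_0$ for $n\le 0$ and $\beta_K v_0$ for $n>0$. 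This simultaneously verifies that each $f_n$ is well defined.

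The remaining, and main, obstacle is to show that no double coset outside the family $\{K\alpha^{-n}I_{1,K}\}$ supports a nonzero $I_{1,K}$-invariant, so that the $f_n$ already span. Here I would argue that any other representative $g$ can be normalized, using the Iwahori factorization inside a fixed Cartan cell, to a form for which $K\cap gI_{1,K}g^{-1}$ reduces modulo $K^1$ to a subgroup of $\Gamma_K$ strictly larger than a maximal unipotent, containing either $\mathbb U$ together with a nontrivial diagonal element or both root directions, and whose fixed space in $\sigma$ is therefore zero. Granting this, every $I_{1,K}$-invariant $f$ is supported on $\bigsqcup_{n}K\alpha^{-n}I_{1,K}$ and restricts on each piece to a scalar multiple of $f_n$, so $f=\sum_n c_n f_n$ is a finite sum; combined with the linear independence above, $\{f_n\}_{n\in\mathbb Z}$ is a basis. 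The delicate points are the precise enumeration of $K\backslash G/I_{1,K}$ within a single Cartan cell and the verification that the corresponding reduced stabilizers kill all invariants; this is where the rank-one structure of $U(2,1)$ and the explicit shape of $I_{1,K}$ must be exploited, and it is the step I expect to demand the most bookkeeping.
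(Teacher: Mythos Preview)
The paper does not actually prove this lemma; it is quoted verbatim from \cite[Lemma 3.5]{X2016}. So there is no ``paper's own proof'' to compare against, and your outline is in fact the standard route to such a statement.

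Your steps (i) and (ii) are correct, but the step you label ``the remaining, and main, obstacle'' is a phantom: there are \emph{no} $(K,I_{1,K})$-double cosets other than the $K\alpha^{-n}I_{1,K}$. Indeed $I_K=(H\cap K)\cdot I_{1,K}$, and since $\alpha$ lies in $H$ and commutes with $H\cap K\subset K$, one has
\[
K\alpha^{n}I_K \;=\; K\alpha^{n}(H\cap K)I_{1,K}\;=\;K(H\cap K)\alpha^{n}I_{1,K}\;=\;K\alpha^{n}I_{1,K}.
\]
Combined with the decomposition $G=\bigsqcup_{n\in\mathbb Z}K\alpha^{n}I_K$ (Cartan together with $\beta_K\alpha^m\beta_K^{-1}=\alpha^{-m}$ and the Bruhat decomposition of $K$), this already gives $G=\bigsqcup_{n\in\mathbb Z}K\alpha^{-n}I_{1,K}$. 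Thus once your computation shows that each such coset supports a one-dimensional space of invariants spanned by $f_n$, the proof is finished. The argument you sketch for the non-existent ``other'' cosets is therefore unnecessary, and in any case not quite right as stated: a subgroup of $\Gamma_K$ containing $\mathbb U$ together with a nontrivial torus element need not have zero fixed space in $\sigma$, since the torus acts on the line $\sigma^{\mathbb U}$ by a character which may well be trivial on that element.
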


The following proposition (\cite[Proposition 3.6, Corollary 3.11]{X2016}) is very useful:
\begin{proposition}\label{hecke operator formula}

We have:

$(1)$~~~$T\cdot f_0= f_{-1} + \lambda_{\beta_K, \sigma}\cdot f_1$.

$(2)$~~~For $n\neq 0$, $T \cdot f_n= cf_n +f_{n+\delta(n)}$, where $c$ is a constant (depending on $\sigma$) and $\delta(n)$ is either $1$ or $-1$, depending on $n> 0$ or $< 0$.
\end{proposition}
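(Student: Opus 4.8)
The plan is to compute $T\cdot f_n$ directly from the explicit formula for the Hecke operator recorded in \cite[(4)]{X2016}, which expresses $T\cdot[g,v]$ as a finite sum of standard functions indexed by a fixed set of representatives $\{g_j\}$ for the single cosets in the decomposition of $K\alpha K$, the value of each term being $j_\sigma$ (or a $K$-translate thereof) applied to $v$. First I would rewrite each $f_n$ as a sum of functions $[g,v]$ supported on single $K$-cosets, by decomposing its support $K\alpha^{-n}I_{1,K}$; the $I_{1,K}$-invariance of $f_n$ forces all these values to be governed by the single generator $v_0$ (for $n\le 0$) or $\beta_K v_0$ (for $n>0$). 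Applying the $T$-formula termwise then presents $T\cdot f_n$ as a sum of functions supported on cosets of the form $K\alpha^{-n}g_j I_{1,K}$, and the whole problem reduces to reorganizing this sum into the canonical functions $f_m$.

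The reorganization is where the tree geometry of $G$ enters. The representatives $\{g_j\}$ split into a single \emph{outward} representative and a family indexed by the coset space $N_{n_K}/N_{n_K+1}$. For each $j$ I would put the product $\alpha^{-n}g_j$ into the normal form $k\cdot\alpha^{-m}\cdot u$ with $k\in K$, $u\in I_{1,K}$ and $m\in\{n-1,n+1\}$, using the identity \eqref{useful identity} to convert the upper-triangular pieces $\beta_K n(x,y)$ appearing in the representatives into lower-triangular and diagonal factors that can be absorbed into $I_{1,K}$, together with the Iwahori decomposition \eqref{iwahori decom in K}. The value produced is then $j_\sigma$ applied to a translate of $v_0$, and by the defining properties of $j_\sigma$ — namely $j_\sigma(\bar v)=v$ for $v\in\sigma^{I_{1,K}}$ and the vanishing of $j_\sigma$ on $\sigma(I'_{1,K})$ — this collapses to a scalar multiple of the standard generator, matching the value prescribed in the definition of the appropriate $f_m$.

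For $n\neq 0$ the outcome of this bookkeeping is that the outward representative contributes the clean term $f_{n+\delta(n)}$ with coefficient $1$ (here $\delta(n)$ is the sign of $n$, so the support genuinely moves away from the origin), while every member of the family indexed by $N_{n_K}/N_{n_K+1}$, after reduction modulo $I_{1,K}$ via \eqref{useful identity}, re-lands on the very coset $K\alpha^{-n}I_{1,K}$ supporting $f_n$; their $j_\sigma$-values then sum to a single scalar $c$, giving the diagonal term $c\,f_n$. Crucially, no inward term $f_{n-\delta(n)}$ survives, precisely because the folded products all reduce to the $\alpha^{-n}$-coset rather than to $\alpha^{-(n-\delta(n))}$. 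The case $n=0$ differs only because $f_0$ takes the value $v_0$ rather than $\beta_K v_0$: the outward representative now yields $f_{-1}$ with coefficient $1$ (from $j_\sigma(v_0)=v_0$), whereas the folded family is shifted onto the support of $f_1$ and its total value is $j_\sigma(\beta_K v_0)=\lambda_{\beta_K,\sigma}\,v_0$ by Remark \ref{value of lambda}, producing the term $\lambda_{\beta_K,\sigma}\,f_1$.

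The main obstacle I anticipate is exactly this folding computation: showing that the family sum indexed by $N_{n_K}/N_{n_K+1}$ collapses to a \emph{single} term with the stated coefficient rather than spawning new support. This rests on a careful application of \eqref{useful identity} to track the $j_\sigma$-values through the change of coset representatives, and on the relation $n_K+m_K=1$, which matches the range of the summation to the coset space controlling $I_{1,K}$ and $I'_{1,K}$. Verifying that the outward coefficient is exactly $1$, that the folded contributions in the $n\neq 0$ case are genuinely constant (independent of the inward representative, hence yielding a well-defined $c$), and that the $n=0$ scalar is precisely $\lambda_{\beta_K,\sigma}$, is the delicate accounting on which the whole statement depends.
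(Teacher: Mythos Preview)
The paper does not prove this proposition itself; it simply quotes it from \cite[Proposition~3.6, Corollary~3.11]{X2016}, so there is no in-paper argument to compare against. Your plan---expand $f_n$ over the cosets in $K\alpha^{-n}I_{1,K}$, apply the explicit formula \cite[(4)]{X2016} for $T$ termwise, then use the identity~\eqref{useful identity} together with the defining properties of $j_\sigma$ (it kills $\sigma(I'_{1,K})$, sends $\overline{v_0}$ to $v_0$, and returns $\lambda_{\beta_K,\sigma}v_0$ on $\beta_K v_0$ via Remark~\ref{value of lambda}) to collapse the sum back into the basis $\{f_m\}$---is a faithful outline of the computation actually carried out in that reference, and your identification of the mechanism behind the vanishing of the inward term is correct.
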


We record some simple and useful corollaries.
\begin{corollary}\label{injectivity of hecke operator}
Any non-zero Hecke operator $P(T)$ is injective.
\end{corollary}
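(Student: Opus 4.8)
The plan is to reduce injectivity of $P(T)$ on all of $\textnormal{ind}^G_K \sigma$ to injectivity on the one-variable space $(\textnormal{ind}^G_K\sigma)^{I_{1,K}}$, and then to dispatch the latter by a leading-term computation based on Proposition \ref{hecke operator formula}. The crucial structural observation is that $\ker P(T)$ is a $G$-subrepresentation, which lets a pro-$p$ fixed-point argument bring everything into the span of the $f_n$.

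First I would set up the reduction. Since $P(T)\in \mathcal{H}(K,\sigma)=\textnormal{End}_G(\textnormal{ind}^G_K\sigma)$, its kernel $\ker P(T)$ is a smooth $\overline{\mathbf{F}}_p$-subrepresentation of $\textnormal{ind}^G_K\sigma$, hence in particular a smooth representation of the pro-$p$ group $I_{1,K}$. Were $\ker P(T)$ nonzero, the standard fact that a nonzero smooth representation of a pro-$p$ group over a field of characteristic $p$ has nonzero invariants would produce a nonzero $f\in\ker P(T)$ fixed by $I_{1,K}$. As $T$ commutes with the $G$-action, it preserves $(\textnormal{ind}^G_K\sigma)^{I_{1,K}}$, so such an $f$ would be a nonzero invariant vector killed by $P(T)$. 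Thus it suffices to prove that $P(T)$ is injective on $(\textnormal{ind}^G_K\sigma)^{I_{1,K}}$, whose basis $\{f_n\mid n\in\mathbb{Z}\}$ we already have in hand.

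Next I would run the leading-term argument on the $f_n$. For a nonzero $f=\sum_n a_n f_n$ (a finite sum) set $N(f)=\max\{|n|\mid a_n\neq 0\}$. Proposition \ref{hecke operator formula} shows that for $n\neq 0$ one has $T f_n=c\,f_n+f_{n+\delta(n)}$ with $|n+\delta(n)|=|n|+1$ and with the new term $f_{n+\delta(n)}$ carrying coefficient $1$, while $Tf_0=f_{-1}+\lambda_{\beta_K,\sigma}f_1$; in every case $T$ raises $|n|$ by exactly one on a term of coefficient $1$. Consequently the ``top'' component of $Tf$, at index $|n|=N(f)+1$, equals $a_{N(f)}f_{N(f)+1}+a_{-N(f)}f_{-(N(f)+1)}$ (or $a_0 f_{-1}+a_0\lambda_{\beta_K,\sigma}f_1$ when $N(f)=0$), which is nonzero, whereas the $f_n$ with $|n|<N(f)$ feed only into indices of absolute value $\leq N(f)$. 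Hence $N(Tf)=N(f)+1$, and inductively $N(T^i f)=N(f)+i$ with nonzero top component. Writing $P(T)=\sum_{i=0}^{d}c_i T^i$ with $c_d\neq 0$, the component of $P(T)f$ at index $N(f)+d$ equals $c_d$ times that of $T^d f$ and is therefore nonzero, so $P(T)f\neq 0$.

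The only content-bearing input here is Proposition \ref{hecke operator formula}, which is already available, so no real obstacle remains; the pro-$p$ reduction is the decisive step and the rest is bookkeeping. The one place I would be careful is the base case $n=0$, where $Tf_0$ spreads in both directions $f_{\pm 1}$ and the scalar $\lambda_{\beta_K,\sigma}$ may vanish. This causes no trouble, since the coefficient of $f_{-1}$ is always $1$, so the top component never degenerates regardless of the value of $\lambda_{\beta_K,\sigma}$.
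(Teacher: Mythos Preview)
Your proposal is correct and follows essentially the same approach as the paper: reduce via the pro-$p$ fixed-point lemma to injectivity on the span of the $f_n$, and then use Proposition \ref{hecke operator formula} to see that no nonzero linear combination of the $f_n$ can be killed by $P(T)$. The only difference is that the paper leaves the last step as a one-line appeal to Proposition \ref{hecke operator formula}, whereas you spell out the leading-term computation explicitly; the extra detail is harmless and sound.
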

\begin{proof}
The kernel of $P(T)$ is $I_{1, K}$-stable; if it is non-zero, it contains some non-zero $I_{1, K}$-invariant function (\cite[Lemma 1]{B-L95}), which is a linear combination of the functions $\{f_n\}_{n\in \mathbb{Z}}$. But that can not happen by Proposition \ref{hecke operator formula}.
\end{proof}

\begin{corollary}\label{f_0 and f_1 are l.d in quotient}
For any $\lambda \in \overline{\mathbf{F}}_p$, the image of the space $(\textnormal{ind}^G _K \sigma)^{I_{1, K}}$ in the representation $\textnormal{ind}^G _K \sigma / (T-\lambda)$ is two dimensional, generated by the images of the functions $f_0$ and $f_1$.
\end{corollary}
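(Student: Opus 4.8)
The plan is to identify the image of $V:=(\textnormal{ind}^G_K\sigma)^{I_{1,K}}$ in $\pi:=\textnormal{ind}^G_K\sigma/(T-\lambda)$ with the cokernel of $(T-\lambda)$ acting on $V$ itself, and then to compute that cokernel from the explicit formulas in Proposition \ref{hecke operator formula}. Write $q$ for the quotient map and $M:=(T-\lambda)\textnormal{ind}^G_K\sigma$ for its kernel, so that the image in question is $q(V)\cong V/(V\cap M)$. Since $T$ is $G$-equivariant it commutes with the $I_{1,K}$-action, so $(T-\lambda)$ preserves $V$ and $(T-\lambda)V\subseteq V\cap M$. The first key step is the reverse inclusion $V\cap M\subseteq (T-\lambda)V$: if $v\in V$ and $v=(T-\lambda)g$, then for every $h\in I_{1,K}$ one has $(T-\lambda)(hg)=h(T-\lambda)g=hv=v=(T-\lambda)g$, whence $hg=g$ by the injectivity of $T-\lambda$ (Corollary \ref{injectivity of hecke operator}); thus $g\in V$ and $v\in (T-\lambda)V$. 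Consequently $q(V)\cong V/(T-\lambda)V$, and it remains to compute this cokernel.

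For the computation I would record the action of $A:=(T-\lambda)|_V$ on the basis $\{f_n\}_{n\in\mathbb Z}$ of $V$ supplied by Proposition \ref{hecke operator formula}:
\begin{equation*}
A f_0=f_{-1}+\lambda_{\beta_K,\sigma}f_1-\lambda f_0,\qquad A f_n=(c-\lambda)f_n+f_{n+\delta(n)}\quad (n\neq 0),
\end{equation*}
with $\delta(n)=1$ for $n>0$ and $\delta(n)=-1$ for $n<0$. Passing to the quotient, where $T$ acts by $\lambda$, gives $\overline{f_{n+1}}=(\lambda-c)\overline{f_n}$ for $n>0$, $\overline{f_{n-1}}=(\lambda-c)\overline{f_n}$ for $n<0$, and $\overline{f_{-1}}=\lambda\overline{f_0}-\lambda_{\beta_K,\sigma}\overline{f_1}$. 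Hence every $\overline{f_n}$ lies in the span of $\overline{f_0}$ and $\overline{f_1}$, so the image is at most two dimensional and generated by $\overline{f_0},\overline{f_1}$.

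It then remains to prove these two classes are linearly independent, which is the only delicate point. Here I would exhibit two linear functionals $\ell_0,\ell_1$ on $V$ vanishing on $A(V)$ and satisfying $\ell_i(f_j)=\delta_{ij}$ for $i,j\in\{0,1\}$; applying them to a putative relation $a\overline{f_0}+b\overline{f_1}=0$ forces $a=b=0$. A functional $\ell$ is determined by arbitrary values on the basis $\{f_n\}$, and the condition $\ell(Af_n)=0$ for all $n$ reads
\begin{equation*}
\ell(f_{-1})=\lambda\,\ell(f_0)-\lambda_{\beta_K,\sigma}\,\ell(f_1),\qquad \ell(f_{n+1})=-(c-\lambda)\ell(f_n)\ (n>0),\qquad \ell(f_{n-1})=-(c-\lambda)\ell(f_n)\ (n<0).
\end{equation*}
These recursions determine $\ell$ uniquely from the pair $(\ell(f_0),\ell(f_1))$, so such functionals form a two-dimensional space, inside which the required dual basis $\ell_0,\ell_1$ exists. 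This supplies the lower bound, and together with the spanning statement shows the image is exactly two dimensional with basis $\overline{f_0},\overline{f_1}$.

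The main obstacle is precisely this independence. The Hecke relations only bound the dimension from above, and the genuinely substantive point is to convert the abstract image $q(V)$ into the concrete cokernel $V/(T-\lambda)V$, where the functional computation lives; this conversion rests on the injectivity of $T-\lambda$ through the identity $V\cap M=(T-\lambda)V$, and without it one could a priori have a collapse of $\overline{f_0}$ and $\overline{f_1}$.
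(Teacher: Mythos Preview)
Your proof is correct and follows essentially the same line as the paper's. Both arguments obtain the spanning statement directly from the recursions in Proposition~\ref{hecke operator formula}, and both reduce the independence of $\overline{f_0},\overline{f_1}$ to a computation inside $V$ via the injectivity of $T-\lambda$ (Corollary~\ref{injectivity of hecke operator}): you phrase this as the identity $V\cap M=(T-\lambda)V$, while the paper argues in the specific instance that any $f$ with $(T-\lambda)f=f_0-cf_1$ must already be $I_{1,K}$-invariant. The only genuine difference is the final step: the paper rules out such an $f$ by a support/degree consideration (the leading $f_{n+\delta(n)}$ term cannot cancel), whereas you construct the dual functionals $\ell_0,\ell_1$ by solving the recursion for $\ell(Af_n)=0$; your version is arguably cleaner in that it handles the full independence $a\overline{f_0}+b\overline{f_1}=0\Rightarrow a=b=0$ at once, while the paper literally only checks $f_0-cf_1\notin(T-\lambda)$ and leaves the case $\overline{f_1}=0$ implicit (though the same degree argument covers it).
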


\begin{proof}
Applying $(2)$ of Proposition \ref{hecke operator formula} repeatedly, we see $f_n \in \langle f_1\rangle_{\overline{\mathbf{F}}_p} +(T-\lambda)$ for $n\geq 2$, and  $f_{-k}\in \langle f_{-1}\rangle_{\overline{\mathbf{F}}_p}+ (T-\lambda)$ for $k\geq 2$. Using $(1)$ of Proposition \ref{hecke operator formula}, we see that $f_{-1}\in \langle f_0, f_1\rangle_{\overline{\mathbf{F}}_p} +(T-\lambda)$. It remains to check that $f_0 -c f_1 \notin (T-\lambda)$ for any $c\in \overline{\mathbf{F}}_p$. If there is a non-zero function $f$ so that
\begin{center}
$f_0- cf_1 =(T-\lambda)f$
\end{center}
holds for some $c$, by last Corollary the function $f$ itself must be $I_{1, K}$-invariant; however, Proposition \ref{hecke operator formula} implies that such an equality can not hold for any non-zero $I_{1, K}$-invariant $f$.
\end{proof}

\section{The spheres $C_{n, \sigma}$}\label{section: spheres}

\subsection{The spheres $C_{n, \sigma}$ and their images under $T$}
Let $K$ be a maximal compact open subgroup of $G$, and $\sigma$ be a weight of $K$. For $n\geq 0$, denote by $R^+ _n (\sigma)$ (resp, $R^{-} _n (\sigma)$) the subspace of functions in $\text{ind}^G _K \sigma$ supported in the coset $K\alpha^n I_K$ (resp, $K\alpha^{-(n+1)}I_K$). Both spaces are $I_K$-stable.

\begin{definition}(\cite[Lemma 3.7]{X2016})

For $n\geq 0$, $R^+ _n (\sigma)= [N_{n_K} \alpha^{-n}, \sigma]$;

For $n\geq 1$, $R^- _{n-1} (\sigma) =[N'_{m_K}\alpha^n,  \sigma]$.

\end{definition}
Put $R_0 (\sigma)= R^- _{-1} (\sigma)=R^+ _0 (\sigma)$, and $R_n (\sigma)= R^+ _n (\sigma)\oplus R^- _{n-1} (\sigma)$, for $n\geq 1$. In terms of the tree of $G$, the space $R_n (\sigma)$ consists of all functions in $\text{ind}^G _K \sigma$ supported in the cycle $\mathbf{C}_n$ of radius $2n$ from the vertex $v_K$, and for this reason we will also denote it by $C_{n, \sigma}$.

\begin{remark}\label{Question}
 It is well-known that the $K$-space $R_0 (\sigma)$ is generated by its one-dimensional $I_{1, K}$-invariants. But we point out this is no longer true for $R_n (\sigma)$ $(n\geq 1)$: recall that the subspace of $I_{1, K}$-invariants of $R_n (\sigma) (n\geq 1)$ is two-dimensional, with a basis $\{f_{-n}, f_{n}\}$ (\cite[Remark 3.8]{X2016}). An estimation of the dimension of the subrepresentation generated by these two functions shows that it is strictly smaller than that of $R_n (\sigma)$.
\end{remark}

\medskip
The following Proposition (\cite[Proposition 3.9]{X2016}) describes how the above $I_K$-stable spaces are changed under the Hecke operator $T$.
\begin{proposition}\label{image of the tree under T}

$(1)$.~~$T (R^+ _0 (\sigma)) \subseteq R^{+}_1 (\sigma)\oplus R^{-}_0 (\sigma)$.

$(2)$.~~$T (R^{+}_n (\sigma)) \subseteq R^{+}_{n-1} (\sigma)\oplus R^{+}_n (\sigma)\oplus R^{+}_{n+1} (\sigma), n \geq 1$.

$(3)$.~~ $T (R^{-}_n (\sigma))\subseteq R^{-}_{n-1} (\sigma)\oplus R^{-}_{n} (\sigma)\oplus R^{-}_{n+1} (\sigma), n\geq 0$.

\end{proposition}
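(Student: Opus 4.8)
The plan is to reduce the three inclusions to an explicit coset computation on the tree of $G$, carried out on the generators of the spheres. Recall that $R^{+}_n(\sigma)$ is spanned by the functions $[u\alpha^{-n}, v]$ with $u\in N_{n_K}$ and $v\in W$, while $R^{-}_n(\sigma)=[N'_{m_K}\alpha^{n+1},\sigma]$ is spanned by the $[u'\alpha^{n+1}, v]$ with $u'\in N'_{m_K}$; it therefore suffices to evaluate $T$ on such generators. By the formula for $T$ in \cite[(4)]{X2016}, each $T\cdot[g,v]$ is a finite sum of functions of the shape $[g\gamma, v']$, where $\gamma$ runs over a fixed set of representatives attached to the double coset $K\alpha K$ and $v'$ is the value produced by $\varphi$. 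Since $[g\gamma, v']$ is supported on $K(g\gamma)^{-1}=K\gamma^{-1}g^{-1}$, the task is, for each $\gamma$, to locate the double coset $K\gamma^{-1}\alpha^{\pm n}(u^{-1}\ \text{or}\ u'^{-1})I_K$ among the $K\alpha^{m}I_K$ and to check that $m$ lies in the claimed range.

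The computational heart is the interaction between $\alpha^{n}$ and unipotents. A direct matrix calculation gives $\alpha^{-n}n(x,y)\alpha^{n}=n(\varpi_E^{\,n}x,\varpi_E^{\,2n}y)$, together with its analogue for $N'$, so that conjugating a unipotent through $\alpha^{\pm n}$ rescales its lower coordinate by $\varpi_E^{\mp 2n}$ and hence shifts it between the levels $N_k$ (resp.\ $N'_k$). The representatives $\gamma$ of $K\alpha K$ are, modulo $K$, essentially the translations by $\alpha^{\pm1}$ together with unipotent factors; accordingly the products $\gamma^{-1}\alpha^{\pm n}$ reduce to $\alpha^{\pm(n-1)}$, $\alpha^{\pm n}$ or $\alpha^{\pm(n+1)}$ modulo unipotent and $K$-factors whose levels are governed by the scaling above. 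To put each resulting element into the normal form $k\,\alpha^{m}\,i$ with $k\in K$ and $i\in I_K$, one moves $\beta_K$ past the unipotent factors using the identity \eqref{useful identity}, absorbs the resulting $K$-part on the left, and appeals to the Bruhat decomposition $K=I_K\cup I_K\beta_K I_K$ together with $n_K+m_K=1$ (Remark \ref{n_K and m_K}) to pin down the $I_K$-coset on the right. Reading off $m$ then yields the three inclusions, the middle term $R^{+}_n(\sigma)$ in $(2)$ (resp.\ $R^{-}_n(\sigma)$ in $(3)$) arising exactly from those $\gamma$ for which the rescaled unipotent lands at the boundary valuation that leaves the radius unchanged.

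The delicate point, and the main obstacle, concerns the $\pm$ type rather than the exponent. At the central sphere $n=0$ in $(1)$ the base vertex $v_K$ lies at the junction of the two half-apartments, so $T$ genuinely spreads in both directions and both $R^{+}_1(\sigma)$ and $R^{-}_0(\sigma)$ occur. For $n\geq1$ in $(2)$ and $(3)$ one must instead show that no term crosses type, i.e.\ that $T(R^{+}_n)$ has no $R^{-}$-component and conversely; concretely this means verifying that none of the normalized products can fold back across the centre. This is ultimately the statement that the building of $G$ is a tree: from a vertex at radius $2n$ with $n\geq1$, a single step of $T$ changes the radius by at most $2$ and therefore cannot reach the opposite branch, which sits at distance at least $2n+2$. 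Turning this geometric fact into the precise valuation thresholds that separate the three target spheres is where the genuine work lies; the remaining manipulations are formal.
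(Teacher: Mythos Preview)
The paper does not supply a proof of this proposition: it is quoted verbatim from \cite[Proposition 3.9]{X2016}, so there is no in-paper argument to compare against. Your outline---evaluate $T$ on generators $[u\alpha^{-n},v]$ and $[u'\alpha^{n+1},v]$ via the explicit coset formula \cite[(4)]{X2016}, use the conjugation $\alpha^{-n}n(x,y)\alpha^{n}=n(\varpi_E^{\,n}x,\varpi_E^{\,2n}y)$ to track levels, and normalize each term back to the form $K\alpha^{m}I_K$ using \eqref{useful identity}---is the natural route and is, as far as one can infer from the paper's surrounding text (e.g.\ the formula for $T^{-}$ recorded just after the proposition, said to be ``implicit in the argument of \cite[Proposition 3.9]{X2016}''), the same approach taken in the cited reference.

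Two minor comments. First, what you have written is a proof \emph{plan} rather than a proof: the actual case split over the coset representatives of $K\backslash K\alpha K$ and the verification that each term lands where claimed are gestured at but not carried out. Second, your tree heuristic in the last paragraph (``a single step of $T$ changes the radius by at most $2$ and therefore cannot reach the opposite branch'') is morally right but is not a substitute for the valuation check: the point that must be verified algebraically is that for $n\geq 1$ the conjugated unipotent never falls below the threshold that would flip the Iwahori coset across $\beta_K$. If you want this to stand as a self-contained proof rather than a pointer to \cite{X2016}, that computation needs to appear explicitly.
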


\medskip

By $(3)$ of Proposition \ref{image of the tree under T}, for $n\geq 0$, we may write $T\mid_{R^- _n (\sigma)}$ as the sum of two operators $T^-$ and $T^+$, where $T^-: R^- _n (\sigma)\rightarrow R^- _{n-1} (\sigma)$, and $T^+: R^- _n (\sigma)\rightarrow R^- _n (\sigma) \oplus R^- _{n+1} (\sigma)$. Similarly, from $(2)$ of Proposition \ref{image of the tree under T}, for $n \geq 1$, we may also write $T\mid_{R^+ _n (\sigma)}$ as the sum of $T^-$ and $T^+$,
where $T^-: R^+ _n (\sigma)\rightarrow R^+ _{n-1} (\sigma)$, and $T^+: R^+ _n (\sigma)\rightarrow R^+ _n (\sigma) \oplus R^+ _{n+1} (\sigma)$. Both operators $T^-$ and $T^+$ are $I_K$-maps.

We record the formula of $T^-$ here, which is very simple, and is implicit in the argument of \cite[Proposition 3.9]{X2016}:
\begin{center}
$T^- f= \begin{cases}
 [u'\alpha^n, j_\sigma v],~~~~~~~~~~~~~~~~~~~~~\text{if}~f=[u'\alpha^{n+1}, v]\in R^- _n (\sigma).\\
 [u\alpha^{-(n-1)}, \beta_K j_\sigma \beta_K v],~~~\text{if}~f=[u\alpha^{-n}, v]\in R^+ _n (\sigma).
\end{cases}$
\end{center}

\begin{corollary}\label{injectivity of T_- and surjectivity of T+}
The operator $T^-$ is surjective, and $T^+$ is injective.
\end{corollary}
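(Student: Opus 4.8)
The plan is to read off both halves from the explicit description of $T^-$ recorded above, together with the $I_K$-structure of the spheres and Lemma \ref{generators of sigma}.

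\emph{Surjectivity of $T^-$.} I would treat the family $R^-_n(\sigma)=[N'_{m_K}\alpha^{n+1},\sigma]$; the case of $R^+_n(\sigma)$ is identical after replacing $j_\sigma$ by $\beta_K j_\sigma\beta_K$. The reduced representatives of the source (radius $n+1$) refine those of the target $R^-_{n-1}(\sigma)$ (radius $n$): each level-$n$ representative $u'$ is covered by a family $u'\theta_i$, where $\theta_i$ runs over $\Theta_n/\Theta_{n+1}$ with $\Theta_m=\{\eta\in N'_{m_K}:\alpha^{-m}\eta\alpha^m\in K\}$ and $\Theta_{n+1}\subseteq\Theta_n$. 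Since $\theta_i\in\Theta_n$, the element $k_i:=\alpha^{-n}\theta_i\alpha^n$ lies in $K$, so the recorded formula gives
\[
T^-[u'\theta_i\alpha^{n+1},v]=[u'\theta_i\alpha^n,j_\sigma v]=[u'\alpha^n,\sigma(k_i)\,j_\sigma v].
\]
As $v$ varies, $j_\sigma v$ sweeps out the whole line $\sigma^{I_{1,K}}=\langle v_0\rangle$, so the image over this fibre contains every $[u'\alpha^n,\sigma(k_i)v_0]$. A direct matrix computation identifies $\{k_i\bmod K^1\}$ with $\{\beta_K u\beta_K\bmod K^1:u\in N_{n_K}/N_{n_K+1}\}$ (both exhaust the opposite unipotent radical $\mathbb{U}'$ of $\Gamma_K$), whence by the second form of Lemma \ref{generators of sigma} the vectors $\{\sigma(k_i)v_0\}$ span $\sigma$. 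Thus $[u'\alpha^n,w]$ lies in the image for every $w$, and letting $u'$ range over all target representatives shows $T^-$ is surjective. At the innermost level the target is the base sphere $R_0(\sigma)$, which is generated by its $I_{1,K}$-invariants, and the same computation applies.

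\emph{Injectivity of $T^+$.} Since $T^+$ has an outward component $T^+_{\mathrm{out}}\colon R^-_n(\sigma)\to R^-_{n+1}(\sigma)$ (resp.\ $R^+_n\to R^+_{n+1}$), it suffices to prove $T^+_{\mathrm{out}}$ injective. Geometrically $T^+_{\mathrm{out}}$ spreads a function supported at a vertex of radius $2(n+1)$ to its neighbours of radius $2(n+2)$; as each outer vertex has a unique inward neighbour, the supports attached to distinct source representatives $u'$ are pairwise disjoint, so no cancellation occurs between different $u'$. It then remains to check that, for a fixed $u'$, the map $v\mapsto T^+_{\mathrm{out}}[u'\alpha^{n+1},v]$ is injective; this holds because, unlike the inward operator $T^-$, the outward terms carry the identity rather than the rank-one map $j_\sigma$ on the $\sigma$-fibre, so the coefficient of one distinguished child already recovers $v$ up to an automorphism of $\sigma$. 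Alternatively, one can equip each sphere with the natural pairing for which $T$ is self-adjoint; then $T^+_{\mathrm{out}}$ is the transpose of $T^-$ on the adjacent sphere, and injectivity of $T^+$ for $\sigma$ follows formally from surjectivity of $T^-$ for the contragredient weight, already established.

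\emph{Main obstacle.} The surjectivity half is essentially forced by the given $T^-$-formula and Lemma \ref{generators of sigma}, the only labour being the bookkeeping that identifies $\{k_i\bmod K^1\}$ with a spanning set of $\sigma$. The genuinely delicate point is injectivity of $T^+$, because only $T^-$ is recorded explicitly here: one must first extract the outward part of $T$ from \cite[Proposition 3.9]{X2016} and then verify the two claims used above — that distinct inner vertices yield disjoint children (a clean tree statement) and, crucially, that the outward coefficient is full rank on $\sigma$ rather than factoring through some rank-one $j_\sigma$. Confirming this non-degeneracy, or else setting up the self-duality cleanly in characteristic $p$, is where the care lies; the boundary levels adjacent to the base vertex $v_K$ should be checked separately but present no new difficulty.
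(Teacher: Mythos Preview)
Your surjectivity argument is essentially the paper's: both exploit the recorded formula for $T^-$ together with Lemma~\ref{generators of sigma} to see that the image over each target vertex contains a spanning set of the fibre $\sigma$. The paper phrases it slightly differently---it writes an arbitrary $v\in\sigma$ as $\sum c_u\,\beta_K u\beta_K v_0$ and then uses the identity $\alpha^k\beta_K u\beta_K=u'\alpha^k$ (some $u'\in N'_{2k-1+m_K}$) to rewrite $[u'_1\alpha^k,\beta_K u\beta_K v_0]$ as $[u'_1 u'\alpha^k,v_0]=T^-[u'_1 u'\alpha^{k+1},v_0]$---but this and your ``vary $\theta_i$, identify $\{k_i\bmod K^1\}$ with $\{\beta_K u\beta_K\}$'' are the same bookkeeping viewed from the two sides.

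For injectivity of $T^+$ you take a genuinely different, and harder, route than the paper. The paper's argument is a one-liner: $T^+$ is an $I_K$-map, so its kernel is $I_K$-stable; if nonzero it contains a nonzero $I_{1,K}$-invariant (pro-$p$, \cite[Lemma~1]{B-L95}), which by \cite[Remark~3.8]{X2016} must be a scalar multiple of some $f_n$ or $f_{-n}$ with $|n|\geq 1$; but Proposition~\ref{hecke operator formula}(2) shows $T^+ f_{\pm n}\neq 0$. This uses no information about $T^+$ beyond what is already encoded in the formula $Tf_n=cf_n+f_{n+\delta(n)}$, and in particular never touches the explicit shape of the outward coefficient. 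Your approach instead rests on the unproved assertion that the outward coefficient of $T$ acts on the $\sigma$-fibre by an automorphism rather than through a rank-one map; you correctly flag this as the main obstacle, and indeed it is not obvious from what is recorded here (all the visible coefficients of $T$ involve $j_\sigma$ or $\beta_K j_\sigma\beta_K$). The self-duality alternative would also need care in characteristic $p$. So your injectivity half has a real gap at exactly the point you identify, and the paper's pro-$p$ trick bypasses it entirely.
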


\begin{proof}

The surjection of $T^-$ follows from its formula above and Lemma \ref{generators of sigma}. In more words, if we take $v= v_0$ and $\beta_K v_0$ respectively in the above formulae, we get:
\begin{center}
$T^- [u'\alpha^{k+1}, v_0]= [u'\alpha^k, v_0]$,
\end{center}
and
\begin{center}
$T^- [u\alpha^{-k}, \beta_K v_0]= [u\alpha^{-(k-1)}, \beta_K v_0]$.
\end{center}

We observe that, for $u \in N_{n_K}$
\begin{center}
$\alpha^k \beta_K u \beta_K = u' \alpha^k$
\end{center}
for some $u'\in N'_{2k-1 +m_K}$. Hence, for any $u'_1 \in N'_{m_K}$, we have
\begin{center}
$[u'_1 \alpha^k, \beta_K u \beta_K v_0]= [u'_1 u'\alpha^k, v_0]$
\end{center}

By Lemma \ref{generators of sigma}, we may write any $v\in \sigma$ as a linear combination of the vectors $\{\beta_K u \beta_K v_0\}_{u \in N_{n_K}/ N_{n_K +1}}$. Putting the preceding together, the surjection of $T^- :R^- _k (\sigma)\rightarrow R^- _{k-1} (\sigma)$ follows immediately.

The surjection of $T^-: R^+ _n (\sigma)\rightarrow R^+ _{n-1} (\sigma)$ can be verified in the same way.

\medskip

If $T^+$ is not injective, its kernel is a non-zero $I_K$-stable space, thus it contains a non-zero $I_{1, K}$-invariant function (\cite[Lemma 1]{B-L95}). By \cite[Remark 3.8]{X2016}, such a function is proportional to $f_n$ or $f_{-n}$ ($n\geq 1$). We get a contradiction with (2) of Proposition \ref{hecke operator formula}.
\end{proof}

\begin{remark}
Note that by $(1)$ of Proposition \ref{image of the tree under T}, we may also define $T^-$ on the space $R^+ _0 (\sigma)$, but one can easily check it is not surjective anymore (see \cite[(4)]{X2016}).
\end{remark}

\begin{lemma}\label{reduction lemma on negative part of tree}
 Let $P(x)$ be a polynomial of degree at least one.

$(1)$.~~For $k\geq 0$, given an $f\in \oplus_{n\geq k}R^- _n(\sigma)$, there is an
 $f'\in \oplus_{n\geq k+1}R^{-}_{n}(\sigma)$, depending on $f$ and $P(x)$, such that
 \begin{center}
$f-f'\in P(T) (\oplus_{n\geq k+1}R^- _n(\sigma)).$
 \end{center}

$(2)$.~~For $k\geq 0$, given an $f\in \oplus_{n\geq k}R^+ _n(\sigma)$, there is an
 $f'\in \oplus_{n\geq k+1}R^+ _n (\sigma)$, depending on $f$ and $P(x)$, such that
 \begin{center}
$f-f'\in P(T) (\oplus_{n\geq k+1}R^+ _n (\sigma)).$
 \end{center}

 \end{lemma}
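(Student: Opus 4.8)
The plan is to treat part $(1)$; part $(2)$ is identical with $R^-_n(\sigma)$ replaced by $R^+_n(\sigma)$ throughout, using the surjectivity of $T^-\colon R^+_n(\sigma)\to R^+_{n-1}(\sigma)$ ($n\geq 1$) recorded in Corollary \ref{injectivity of T_- and surjectivity of T+}. Write $d=\deg P\geq 1$ and $P(x)=\sum_{i=0}^d a_i x^i$ with $a_d\neq 0$, and decompose $f=\sum_{n\geq k}f_n$ with $f_n\in R^-_n(\sigma)$. The whole construction will depend only on the bottom component $f_k$, since it will suffice to cancel $f_k$ modulo $P(T)\bigl(\oplus_{n\geq k+1}R^-_n(\sigma)\bigr)$. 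Concretely, I will produce a single function $g\in R^-_{k+d}(\sigma)\subseteq \oplus_{n\geq k+1}R^-_n(\sigma)$ (note $k+d\geq k+1$) such that the $R^-_k(\sigma)$-component of $P(T)g$ is exactly $f_k$ while $P(T)g$ has no component on any lower sphere and no component in the $R^+$-part; then $f':=f-P(T)g$ lies in $\oplus_{n\geq k+1}R^-_n(\sigma)$ and $f-f'=P(T)g$, as required.

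The engine is the splitting $T=T^-+T^+$ from Proposition \ref{image of the tree under T}$(3)$, where $T^-\colon R^-_m(\sigma)\to R^-_{m-1}(\sigma)$ is surjective and $T^+\colon R^-_m(\sigma)\to R^-_m(\sigma)\oplus R^-_{m+1}(\sigma)$. Starting from $g$ supported on the single sphere $k+d$, I track the support of $T^i g$: it lies on spheres $[\,k+d-i,\ k+d+i\,]$, so for $i<d$ the term $a_iT^ig$ contributes nothing to sphere $k$, and the only way to descend from sphere $k+d$ to sphere $k$ in $d$ steps is to take the down-step $T^-$ every time. Hence the $R^-_k(\sigma)$-component of $P(T)g$ equals $a_d(T^-)^d g$, with no contribution from the ``stay'' or ``up'' parts of $T^+$. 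Since $T^-$ is surjective at each of the spheres $k+d,\dots,k+1$ (all $\geq 1$ because $k\geq 0$), the composite $(T^-)^d\colon R^-_{k+d}(\sigma)\to R^-_k(\sigma)$ is surjective, and as $a_d\neq 0$ I can choose $g$ with $a_d(T^-)^d g=f_k$.

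The delicate point, and the only place where $k=0$ matters, is to confirm that $P(T)g$ has no $R^+$-component and nothing below sphere $k$. Both potential failures originate at the boundary of the tree: by Proposition \ref{image of the tree under T}$(3)$, applying $T$ to a sphere-$0$ function in $R^-_0(\sigma)$ leaks a component into $R^-_{-1}(\sigma)=R^+_0(\sigma)$, and this is the only mechanism that could create an $R^+$-part or push support below sphere $0$. The key observation is that for $1\leq i\leq d$ the support of $T^{i-1}g$ lies on spheres $\geq k+d-(i-1)\geq k+1\geq 1$, so $T$ is never applied to a sphere-$0$ component; the sphere $0$ (reached only when $k=0$) appears only after the final application and is then left untouched. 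Consequently $P(T)g\in\oplus_{n\geq k}R^-_n(\sigma)$ with bottom component $f_k$, the cancellation $f'=f-P(T)g\in\oplus_{n\geq k+1}R^-_n(\sigma)$ holds, and we are done. I expect this boundary bookkeeping to be the main obstacle; everything else reduces to the iterated surjectivity of $T^-$.
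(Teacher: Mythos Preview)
Your argument is correct. The boundary check is handled cleanly: since $g\in R^-_{k+d}(\sigma)$ and each application of $T$ lowers the minimal supporting sphere by at most one, for $1\le i\le d$ the function $T^{i-1}g$ is supported on spheres $\ge k+d-(i-1)\ge k+1\ge 1$, so the leak from $R^-_0(\sigma)$ into $R^-_{-1}(\sigma)=R^+_0(\sigma)$ never occurs. Hence $P(T)g\in\bigoplus_{n\ge k}R^-_n(\sigma)$, and in the expansion $T^d=(T^-+T^+)^d$ the unique length-$d$ word reaching sphere $k$ is $(T^-)^d$, giving the bottom component $a_d(T^-)^dg=f_k$. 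The same reasoning goes through verbatim for part $(2)$, using that the splitting $T=T^-+T^+$ on $R^+_m(\sigma)$ is available for all $m\ge 1$ and that intermediate supports stay $\ge k+1$.

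Your route differs from the paper's. The paper argues by induction on $\deg P$: it factors $P(x)=(x-\lambda)P_1(x)$, lifts the \emph{entire} function $f$ one sphere up via the surjectivity of $T^-$ to some $g_1\in\bigoplus_{n\ge k+1}R^-_n(\sigma)$ with $T^-g_1=f$, observes that $(T-\lambda)g_1-f=T^+g_1-\lambda g_1\in\bigoplus_{n\ge k+1}R^-_n(\sigma)$, and then invokes the induction hypothesis for $P_1$ applied to $g_1$. Your argument instead lifts only the bottom component $f_k$ by $d$ spheres in one shot and kills it with a single application of $P(T)$, trading the induction for a combinatorial analysis of the word expansion of $T^d$. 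Your version is a bit more direct and avoids the recursive bookkeeping; the paper's version never needs to think about iterated powers of $T$ or about which word reaches the bottom, and its boundary analysis is trivially one step at a time. Both rest on the same engine, namely the surjectivity of $T^-$ from Corollary~\ref{injectivity of T_- and surjectivity of T+}.
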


\begin{proof}
We only prove $(1)$ in detail, and the argument may be slightly modified to work for $(2)$.

We write $P(x)= (x-\lambda)P_1 (x)$ for some polynomial $P_1 (x)$ of degree strictly smaller than that of $P(x)$, and for
some $\lambda \in \overline{\mathbf{F}}_p$. By Corollary \ref{injectivity of T_- and surjectivity of T+}, we find some $g_1\in \oplus_{n\geq k+1}R^{-}_{n}(\sigma)$, such that $T^{-} (g_1)= f$.
If $P(x)$ is linear, the function $-T^{+} (g_1)+ \lambda g_1$ is as desired. If not, we do induction on the degree of $P(x)$. The induction hypothesis gives $g_2, g_3\in \oplus_{n\geq k+2}R^{-}_{n}(\sigma)$, such that $g_1- g_2= P_1 (T) (g_3)$. Now, the function
\begin{center}
 $-T^{+} (g_1)+ \lambda g_1 +(T-\lambda)g_2$
\end{center}
lies in $\oplus_{n\geq k+1}R^{-}_{n}(\sigma)$ and satisfies the requirement.
\end{proof}

\subsection{The action of $G$ on the spheres $C_{n, \sigma}$}

In this subsection, we carry out certain computations in full on the Bruhat--Tits tree of $G$. More specifically, we estimate the group action of $G$ on the pro-$p$-Iwahori invariants of a maximal compact induction. As we will see, it helps us to simplify and unify many later arguments.

We start with a simple and useful lemma:

\begin{lemma}\label{action of G on tree}

$(1)$.~For a function $[u \alpha^{-k}, v]\in R^+ _k (\sigma)$, where $u\in N_{n_K}, v\in \sigma$, and for $n\geq 0$, we have,

\begin{center}$\alpha^n \cdot[u \alpha^{-k}, v]=[\alpha^n \cdot u \alpha^{-k}, v]$
\end{center}
\begin{center}
$\begin{cases}
  \in R^+ _{k-n} (\sigma)\subset I^{+}(\sigma), ~~~~~\text{for} ~u\in N_{n_K +2n}, n\leq k;\\
  \in R^- _{k-n+t-1}\subset I^{-}(\sigma), ~~\text{for} ~u\in N_{n_K +2n-t}\setminus N_{n_K + 2n -t+1}, 1\leq t \leq 2n, n\leq k; \\
  \in R^- _{n-k+t-1}\subset I^{-}(\sigma), ~~~~~~\text{for} ~u\in N_{n_K +2k-t}\setminus N_{n_K+2k-t+1}, 0\leq t \leq 2k, ~k< n.
\end{cases}$
 \end{center}

$(2)$.~For a function $[u'\alpha^k, v]\in R^- _{k-1} (\sigma)$, where $u'\in N'_{m_K}, v\in \sigma$, and for $n\geq 0$, we have

\begin{center}$\alpha^{-n} \cdot [u' \alpha^k, v]=[\alpha^{-n} \cdot u' \alpha^k, v]$
\end{center}
\begin{center}
$\begin{cases}
  \in R^- _{k-n-1}(\sigma)\subset I^-(\sigma), ~~~~~~\text{for} ~u'\in N'_{2n+ m_K}, n< k;\\
  \in R^+ _{k-n+t-1} (\sigma)\subset I^+(\sigma), ~~\text{for} ~u'\in N'_{2n+m_K-t}\setminus N'_{2n+ m_K -t+1}, 1\leq t\leq 2n, n< k; \\
  \in R^+ _{n-k+t-1}(\sigma)\subset I^+(\sigma), ~~\text{for} ~u'\in N'_{2k+m_K-t}\setminus N'_{2k+ m_K -t+1}, 1\leq t \leq 2k, k\leq n; \\
  \in R^+ _{n-k}(\sigma)\subset I^+(\sigma), ~~~~~~~~~\text{for} ~u'\in N'_{2k+m_K}, k\leq n.
\end{cases}$
 \end{center}

$(3)$. The action of $\beta_K$ is given by:

$(a)$.~$\beta_K \cdot R^+ _0 (\sigma) = R^+ _0 (\sigma)$,~~$\beta_K \cdot R^+ _n (\sigma) \subseteq R^+ _n (\sigma)\oplus R^- _{n-1} (\sigma), n\geq 1$.

$(b)$.~$\beta_K \cdot R^- _{n-1} (\sigma) \subseteq R^+ _n (\sigma), n\geq 1$.

\end{lemma}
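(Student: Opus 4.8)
The plan is to reduce everything to two ingredients: the defining rules $g'\cdot[g,v]=[g'g,v]$ and $[gk,v]=[g,\sigma(k)v]$ for the compact induction, and the explicit conjugation of the torus element $\alpha$ on the root groups. A direct matrix computation gives $\alpha^{n}n(x,y)\alpha^{-n}=n(\varpi_E^{-n}x,\varpi_E^{-2n}y)$ and, symmetrically, $\alpha^{-n}n'(x,y)\alpha^{n}=n'(\varpi_E^{-n}x,\varpi_E^{-2n}y)$; I will also record $\beta\alpha\beta=\alpha^{-1}$ and $\beta\,n'(x,y)\,\beta=n(-\bar x,y)$. With these in hand, computing $\alpha^{\pm n}\cdot[g,v]$ amounts to transporting the single factor $\alpha^{\pm n}$ across the unipotent and then deciding, \emph{from the valuation of the resulting $y$-coordinate}, whether the transported unipotent still lies in $I_{1,K}$ (so the support stays on the same side of the central vertex) or whether it has left $K$ and must be flipped across the vertex by the Weyl element --- which is exactly what the identity \eqref{useful identity} accomplishes.

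For part $(1)$ I would write $\alpha^{n}u\alpha^{-k}=(\alpha^{n}u\alpha^{-n})\,\alpha^{n-k}$ and set $w=n(\varpi_E^{-n}x,\varpi_E^{-2n}y)$ for $u=n(x,y)\in N_{n_K}$. When $u\in N_{n_K+2n}$ the $y$-coordinate of $w$ still has valuation $\ge n_K$, so $w\in N_{n_K}\subseteq I_{1,K}$ and, for $n\le k$, the function $[\,w\,\alpha^{-(k-n)},v\,]$ is by definition a generator of $R^{+}_{k-n}(\sigma)$; this gives the first line. When instead $u\in N_{n_K+2n-t}\setminus N_{n_K+2n-t+1}$ with $1\le t\le 2n$, the $y$-coordinate of $w$ acquires valuation $n_K-t<n_K$, so $w\notin I_{1,K}$; here I apply \eqref{useful identity} to write $w=\beta\,n_1 h_1 n'_1$, where the new $y$-coordinate is $(\varpi_E^{-2n}y)^{-1}$ of valuation $t-n_K$. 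Using the Hermitian relation $x\bar x+y+\bar y=0$ to bound the valuation of the $x$-coordinate, one checks that $n_1$ and $n'_1\in N'_{m_K}$ stay in $I_K$ while $h_1$ contributes a power of $\alpha$; collecting these factors with the leftover $\alpha^{n-k}$, the support lands in the precise double coset $K\alpha^{-(k-n+t)}I_K$, i.e. in $R^{-}_{k-n+t-1}(\sigma)$, as claimed. The sub-case $k<n$ is identical except that the leftover $\alpha$-power has positive exponent, producing the index $n-k+t-1$.

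Part $(2)$ is the mirror image of $(1)$ under $N\leftrightarrow N'$, $\alpha\leftrightarrow\alpha^{-1}$ and $n_K\leftrightarrow m_K$: I would run the same argument from $\alpha^{-n}u'\alpha^{k}=(\alpha^{-n}u'\alpha^{n})\alpha^{k-n}$ with $\alpha^{-n}u'\alpha^{n}=n'(\varpi_E^{-n}x,\varpi_E^{-2n}y)$, casing on the valuation of its $y$-coordinate against $m_K$ and flipping via \eqref{useful identity} whenever it drops below $m_K$. For part $(3)$ I treat $\beta_K=\beta$ first and read off $\beta'=\beta\alpha^{-1}$ afterwards. Statement $(3a)$ for $R^{+}_0(\sigma)$ is immediate, since $R^{+}_0(\sigma)$ is exactly the space of functions supported on $K$ and $\beta_K\in K$ preserves it; for $n\ge1$ I apply \eqref{useful identity} to $\beta\,n(x,y)\,\alpha^{-n}$, separating the degenerate case $y=0$ (where $x\bar x+y+\bar y=0$ forces $u=1$ and $\beta\alpha^{-n}=\alpha^{n}\beta$ sends the function into $R^{-}_{n-1}(\sigma)$) from $y\ne0$ (where the support stays in $R^{+}_n(\sigma)$), giving $\beta_K R^{+}_n(\sigma)\subseteq R^{+}_n(\sigma)\oplus R^{-}_{n-1}(\sigma)$. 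For $(3b)$ I use $\beta\,n'(x,y)\,\beta=n(-\bar x,y)$ and $\beta\alpha^{n}=\alpha^{-n}\beta$ to get $\beta\cdot[u'\alpha^{n},v]=[\,n(-\bar x,y)\,\alpha^{-n},\sigma(\beta)v\,]$; for $K=K_0$ one has $m_K\ge n_K$ (cf. Remark \ref{n_K and m_K}) so $n(-\bar x,y)\in N_{m_K}\subseteq N_{n_K}$ is already a generator of $R^{+}_n(\sigma)$, and for $K=K_1$ the extra factor $\alpha^{-1}$ in $\beta'$ together with the swapped values of $n_K,m_K$ yields the same conclusion.

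The main obstacle is the valuation bookkeeping in the flipped cases of $(1)$ and $(2)$: after applying \eqref{useful identity} one must verify that the torus factor $h_1$ and the opposite-unipotent factor can be absorbed into $I_K$ up to an explicit power of $\alpha$, so that the support is the exact double coset $K\alpha^{m}I_K$ rather than merely $K\alpha^{m}K$, and that the index carries the precise shift by $t-1$. The Hermitian constraint $x\bar x+y+\bar y=0$, tying $v(x)$ to $v(y)$, is what makes this bookkeeping close up.
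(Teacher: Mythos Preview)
Your approach matches the paper's: the authors also say parts $(1)$ and $(2)$ reduce to the conjugation formulae for $\alpha$ on $N$ and $N'$ together with identity~\eqref{useful identity}, and that part $(3)$ is read off from the definitions. Your write-up is in fact considerably more detailed than the paper's one-paragraph sketch.

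There is, however, one genuine slip in your treatment of $(3a)$ for $n\ge 1$. You split according to $y=0$ versus $y\ne 0$ and claim that whenever $y\ne 0$ the support stays in $R^{+}_{n}(\sigma)$. That is not correct: the right dichotomy is $u\in N_{n_K+1}$ versus $u\in N_{n_K}\setminus N_{n_K+1}$, i.e.\ $v(y)\ge n_K+1$ versus $v(y)=n_K$. Indeed, for $u=n(x,y)\in N_{n_K+1}$ one uses $\beta_K n(x,y)\beta_K^{-1}=n'(-\bar x,y)$ and $\beta_K\alpha^{-n}=\alpha^{n}\beta_K$ to get
\[
\beta_K\cdot[u\alpha^{-n},v]=[\,n'(-\bar x,y)\,\alpha^{n},\,\sigma(\beta_K)v\,],
\]
and $v(y)\ge n_K+1=m_K$ gives $n'(-\bar x,y)\in N'_{m_K}$, so this lands in $R^{-}_{n-1}(\sigma)$, \emph{not} in $R^{+}_{n}(\sigma)$. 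Your own method of applying~\eqref{useful identity} would have flagged this: for $v(y)>n_K$ the resulting $n_1=n(\bar y^{-1}x,y^{-1})$ has $v(y^{-1})<n_K$ and hence $n_1\notin N_{n_K}$, so the element cannot be absorbed on the $R^{+}$ side. Conversely, for $u\in N_{n_K}\setminus N_{n_K+1}$ (so $v(y)=n_K$) the factors produced by~\eqref{useful identity} do lie in $I_K$ and one stays in $R^{+}_{n}(\sigma)$. This is exactly the split the paper records. Once you replace your $y=0$/$y\ne 0$ casing by the $N_{n_K+1}$/$N_{n_K}\setminus N_{n_K+1}$ casing, the rest of your argument goes through unchanged.
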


\begin{proof}
The statements in $(3)$ are straightforward from the definitions of $R^+ _n (\sigma)$ and $R^- _{n-1} (\sigma)$. We only say a few words for the second statement in $(a)$: for a function $f\in R^+ _n (\sigma)$ ($n\geq 1$), the matrix $\beta_K$ maps the part of $f$ supported in $K\alpha^n N_{n_K +1}$ into $R^- _{n-1} (\sigma)$, and the remaining part of $f$ into $R^+ _n (\sigma)$.

In the lists $(1)$ and $(2)$ above, only the second and third statements are not obvious, and they essentially follow from some explicit computation using the equality \eqref{useful identity}.
\end{proof}

\medskip

The main results of this part are summarized in the following two lemmas:
\begin{lemma}\label{transition}
$(1)$.~For $k\geq 0$, $n\geq 0$,
\begin{center}
$\alpha^n f_{-k}\in\begin{cases}
    f_{-(k-n)}+ \bigoplus_{k-n \leq m \leq k+n-1} R^- _m (\sigma), ~~~~n\leq k.\\
   \bigoplus_{n-k-1 \leq m \leq n+k-1} R^- _m (\sigma), ~~~~~n> k.
\end{cases}$
 \end{center}

$(2)$.~For $k\geq 1$, $n\geq 0$,
\begin{center}
$\alpha^{-n} f_k \in\begin{cases}
    f_{k-n}+ \bigoplus_{k-n \leq m \leq k+n-1} R^+ _m (\sigma) , ~~~~n< k. \\
   \bigoplus_{n-k \leq m \leq n+k-1} R^+_m (\sigma), ~~~~~n\geq k.
\end{cases}$
 \end{center}

\end{lemma}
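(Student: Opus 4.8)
The two statements are parallel, so the plan is to carry out part $(1)$ in detail and obtain part $(2)$ by the symmetric argument with the roles of $N$ and $N'$ (equivalently $\alpha$ and $\alpha^{-1}$) interchanged, using part $(2)$ of Lemma \ref{action of G on tree} in place of part $(1)$. The starting observation is that $f_{-k}$ is, up to scalar, the unique $I_{1,K}$-invariant function in the sphere $R^+_k(\sigma)$ (by its support and Remark \ref{Question}), normalized by $f_{-k}(\alpha^k)=v_0$, and likewise $f_k$ is the $I_{1,K}$-invariant function in $R^-_{k-1}(\sigma)$ with $f_k(\alpha^{-k})=\beta_K v_0$. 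I would therefore expand $f_{-k}=\sum_u [u\alpha^{-k},v_u]$ with $u$ running over representatives of $N_{n_K}/N_{n_K+2k}$, sort these representatives by the filtration stratum they occupy, and apply $\alpha^n$ to each term via part $(1)$ of Lemma \ref{action of G on tree}.

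First I would read off the ranges. Substituting the strata $u\in N_{n_K+2n}$ and $u\in N_{n_K+2n-t}\setminus N_{n_K+2n-t+1}$ (for $n\leq k$), respectively $u\in N_{n_K+2k-t}\setminus N_{n_K+2k-t+1}$ (for $n>k$), into Lemma \ref{action of G on tree}$(1)$, the image terms land in $R^+_{k-n}(\sigma)$ together with $R^-_{k-n+t-1}(\sigma)$ for $1\leq t\leq 2n$ when $n\leq k$, and in $R^-_{n-k+t-1}(\sigma)$ for $0\leq t\leq 2k$ when $n>k$. Computing the endpoints of $t$ shows that the negative-sphere indices fill out exactly $k-n\leq m\leq k+n-1$ in the first regime and $n-k-1\leq m\leq n+k-1$ in the second, matching the asserted ranges; in the regime $n>k$ no positive-sphere term survives, while for $n\leq k$ a single summand remains in $R^+_{k-n}(\sigma)$. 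The identical substitution into Lemma \ref{action of G on tree}$(2)$ yields the ranges of part $(2)$, where for $n<k$ the surviving leading term instead stays in the negative sphere $R^-_{k-n-1}(\sigma)$.

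It remains to identify the surviving leading summand with $f_{-(k-n)}$ (resp. $f_{k-n}$), and this is the step I expect to be the main obstacle. Write $A$ for the $R^+_{k-n}(\sigma)$-component of $\alpha^n f_{-k}$. Two facts pin it down. For the normalization, evaluation at $\alpha^{k-n}$ gives $A(\alpha^{k-n})=(\alpha^n f_{-k})(\alpha^{k-n})=f_{-k}(\alpha^k)=v_0$, because the remaining summands lie in the $R^-$ spheres and are supported away from $K\alpha^{k-n}$. For invariance, since each sphere is $I_K$-stable, projection onto $R^+_{k-n}(\sigma)$ is $I_{1,K}$-equivariant, while $\alpha^n f_{-k}$ is invariant under $\alpha^n I_{1,K}\alpha^{-n}$; combined with the Iwahori factorization $I_{1,K}=N'_{m_K}\cdot(I_{1,K}\cap H)\cdot N_{n_K}$ (Remark \ref{n_K and m_K}) and the elementary relations $\alpha^n N_{n_K}\alpha^{-n}=N_{n_K-2n}\supseteq N_{n_K}$ and $\alpha^n N'_{m_K}\alpha^{-n}=N'_{m_K+2n}$, this shows $A$ is automatically invariant under $N_{n_K}$, under $I_{1,K}\cap H$, and under $N'_{m_K+2n}$.

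The genuine difficulty will be the residual invariance of $A$ under $N'_{m_K}/N'_{m_K+2n}$, which does not come for free precisely because $\alpha$ does not normalize $I_{1,K}$. I see two ways to close this gap: either verify the missing invariance by hand, computing the action of a representative $n'(a,b)\in N'_{m_K}$ on the leading stratum by means of the identity \eqref{useful identity} and checking that it fixes $A$; or, more cleanly, invoke the explicit description of the invariant functions $f_{-k}$ from \cite{X2016} and observe that conjugation by $\alpha^n$ carries the deepest stratum $N_{n_K+2n}/N_{n_K+2k}$ bijectively onto $N_{n_K}/N_{n_K+2(k-n)}$ while preserving the coefficient pattern, so that the transported sum is literally the defining expansion of $f_{-(k-n)}$. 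Once $A$ is known to be $I_{1,K}$-invariant with $A(\alpha^{k-n})=v_0$, the one-dimensionality of the $I_{1,K}$-invariants of $R^+_{k-n}(\sigma)$ forces $A=f_{-(k-n)}$, completing part $(1)$; part $(2)$ then follows verbatim, with $\beta_K v_0$ in place of $v_0$ and the leading summand read off in the negative sphere.
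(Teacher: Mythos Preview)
Your approach is correct and is exactly what the paper's one-line proof (``by the definitions of the functions $f_n$ and the first two lists in Lemma \ref{action of G on tree}'') is compressing: expand $f_{-k}$ over the strata of $N_{n_K}/N_{n_K+2k}$, apply Lemma \ref{action of G on tree}$(1)$ term by term to read off the ranges, and identify the surviving $R^+_{k-n}$-piece as $f_{-(k-n)}$ from its explicit description.

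One remark on the step you call the ``main obstacle'': the worry about $N'_{m_K}/N'_{m_K+2n}$-invariance of the leading component $A$ is unnecessary, and your second route is the intended one. Since $f_{-k}$ is determined by $f_{-k}(\alpha^k u)=v_0$ for every $u\in N_{n_K}$ (this is its defining property as the $I_{1,K}$-invariant function on $K\alpha^k N_{n_K}$), one simply computes $A(\alpha^{k-n}u)=(\alpha^n f_{-k})(\alpha^{k-n}u)=f_{-k}(\alpha^k u)=v_0$ for all $u\in N_{n_K}$, which is already the full defining data of $f_{-(k-n)}$; no separate invariance check is needed. So the ``difficulty'' dissolves once you evaluate on the whole support rather than at the single point $\alpha^{k-n}$.
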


\begin{proof}
 Mainly by the definitions of the functions $f_n$ ($n\in \mathbb{Z}$) and the first two lists in Lemma \ref{action of G on tree}.
\end{proof}

\begin{lemma}\label{transition2}
$(1)$.~For $k\geq 0$, $n\geq 0$,
\begin{center}
$\beta_K\alpha^n f_{-k}\in\begin{cases}
    f_{k-n}+\bigoplus_{k-n \leq m \leq k+n} R^+ _m (\sigma), ~~~~n< k.\\
    \bigoplus_{0 \leq m \leq 2n} R^+ _m (\sigma),            ~~~~n=k. \\
   \bigoplus_{n-k \leq m \leq n+k} R^+ _m (\sigma), ~~~~~n> k.
\end{cases}$
 \end{center}

$(2)$.~For $k\geq 1$, $n\geq 0$,
\begin{center}
$\beta_K\alpha^{-n} f_k \in\begin{cases}
   R^+ _k (\sigma), ~~~~n=0.\\
   f_{-(k-n)} +\bigoplus_{k-n \leq m \leq k+n-1} R^- _m (\sigma), ~~~~1\leq n\leq k.\\
   \bigoplus_{n-k-1 \leq m \leq n+k-1} R^- _m (\sigma), ~~~~~n> k.
\end{cases}$
 \end{center}

\end{lemma}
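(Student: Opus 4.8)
\section*{Proof proposal}

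The plan is to deduce this lemma from Lemma \ref{transition} by applying the extra element $\beta_K$, using only the sphere-shifting rules recorded in part $(3)$ of Lemma \ref{action of G on tree}, together with the relation $\beta_K\alpha\beta_K=\alpha^{-1}$. Since $\beta_K^2=1$, the latter yields $\beta_K\alpha^n=\alpha^{-n}\beta_K$ and $\beta_K\alpha^{-n}=\alpha^n\beta_K$. In this way both parts reduce to material already in hand, and the only genuinely new point is the identification of the explicit leading function.

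For part $(1)$ I would argue directly, since here the conjugation relation is inconvenient (it would require letting $\alpha^{-n}$ act on a positive sphere, which is not covered by the lists). I insert the decomposition of $\alpha^n f_{-k}$ furnished by Lemma \ref{transition}$(1)$ and apply $\beta_K$ to each summand via $(3)$ of Lemma \ref{action of G on tree}. Each term in $R^-_m(\sigma)$ is shifted to $R^+_{m+1}(\sigma)$, and the leading $f_{-(k-n)}\in R^+_{k-n}(\sigma)$ is shifted by $(3)(a)$ into $R^+_{k-n}(\sigma)\oplus R^-_{k-n-1}(\sigma)$; collecting these gives precisely the stated range $k-n\le m\le k+n$ of positive spheres, with the sole negative contribution sitting in $R^-_{k-n-1}(\sigma)$. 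The split into the three cases $n<k$, $n=k$, $n>k$ mirrors the two cases of Lemma \ref{transition}$(1)$ refined by the dichotomy $\beta_K R^+_0(\sigma)=R^+_0(\sigma)$ versus $\beta_K R^+_j(\sigma)\subseteq R^+_j(\sigma)\oplus R^-_{j-1}(\sigma)$ for $j\ge 1$: when $n=k$ the leading sphere is $R^+_0(\sigma)$, which produces no negative overflow, so no explicit leading function appears.

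For part $(2)$ I would instead rewrite $\beta_K\alpha^{-n}f_k=\alpha^n(\beta_K f_k)$. By $(3)(b)$ of Lemma \ref{action of G on tree} the function $\beta_K f_k$ lies in $R^+_k(\sigma)$, so I may feed it into part $(1)$ of Lemma \ref{action of G on tree} and let $\alpha^n$ act: the deep part of $\beta_K f_k$ is carried into the single positive sphere $R^+_{k-n}(\sigma)$, while the remaining layers spill into the negative spheres $R^-_m(\sigma)$ with $k-n\le m\le k+n-1$, which is exactly the asserted shape (the case $n=0$ being simply $\beta_K f_k\in R^+_k(\sigma)$). I emphasise that one must \emph{not} instead expand $\alpha^{-n}f_k$ by Lemma \ref{transition}$(2)$ and apply $\beta_K$ term by term: because $\beta_K$ spreads each $R^+_m(\sigma)$ into $R^+_m(\sigma)\oplus R^-_{m-1}(\sigma)$, this badly over-counts the positive spheres, whereas the true function $\alpha^n(\beta_K f_k)$ recombines into a single positive sphere. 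Routing through the conjugation relation is what avoids this.

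The step I expect to be the main obstacle is pinning down the explicit leading functions, $f_{k-n}$ in part $(1)$ and $f_{-(k-n)}$ in part $(2)$, since the coarse containments of Lemma \ref{action of G on tree}$(3)$ only locate the correct spheres and do not by themselves evaluate the invariant coefficient. One cannot shortcut this via a naive symmetry $\beta_K f_k=f_{-k}$, which in fact fails: $\beta_K f_k$ is supported on $K\alpha^k I'_{1,K}$ rather than on the support $K\alpha^k I_{1,K}$ of $f_{-k}$, and these cosets differ in general once $2k$ exceeds $m_K$. Instead I would isolate the single innermost contribution of each function and compute its $\beta_K$-image explicitly from the definitions of the $f_n$ and the identity \eqref{useful identity}, exactly as in the proof of Lemma \ref{transition}, verifying that $\beta_K$ carries the distinguished invariant generator of the innermost sphere to the distinguished generator of the opposite innermost sphere with the normalisation built into the definition of $f_{\pm(k-n)}$.
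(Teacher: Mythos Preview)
Your approach is essentially the same as the paper's: for part $(1)$ you feed Lemma~\ref{transition}$(1)$ through $(3)$ of Lemma~\ref{action of G on tree}, and for part $(2)$ you use the conjugation $\beta_K\alpha^{-n}=\alpha^n\beta_K$, observe $\beta_K f_k\in R^+_k(\sigma)$, and then apply $(1)$ of Lemma~\ref{action of G on tree}. The paper adds one concrete ingredient you only allude to: it writes out $\beta_K f_k$ explicitly as $\sum_{u\in N_{n_K+1}/N_{n_K+2k}}[u\alpha^{-k},v_0]$, which is what makes the leading term $f_{-(k-n)}$ in part $(2)$ immediate rather than an ``obstacle'' --- the deep layer $u\in N_{n_K+2n}$ of this sum, after translation by $\alpha^n$, is literally the defining sum for $f_{-(k-n)}$.
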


\begin{proof}
The lists in $(1)$ are straightforward from $(1)$ of Lemma \ref{transition} and $(3)$ of Lemma \ref{action of G on tree}, but notice here the subscript changes.

For $(2)$, note that $\beta_K\alpha^{-n}= \alpha^n \beta_K$. The function $\beta_K f_k$ ($k\geq 1$) is given by:
\begin{center}
$ \sum _{u'\in N'_{m_K} /N'_{m_K +2k-1}}~ [\beta_K u'\alpha^k, \beta_K v_0]= \sum_{u\in N_{n_K +1}/ N_{n_K +2k}} ~[u\alpha^{-k}, v_0].$
\end{center}
Especially, it lies in $R^+ _k (\sigma)$. Now the statements in $(2)$  results from $(1)$ of Lemma \ref{action of G on tree}.
\end{proof}

\section{The $I_K$-subrepresentation $I^+ (\sigma, \pi) \cap I^-(\sigma, \pi)$}\label{section: initial results on the I-reps}
We follow Hu's work (\cite{Hu12}) on canonical diagrams of $GL_2$ in this section. Roughly speaking, for a smooth representation $\pi$ of $G$ and a weight $\sigma$ of $K$ contained in $\pi$, we attach to $\pi$ an $I_K$-subrepresentation, and we verify some properties of such representation when $\pi$ satisfies some further conditions. We are very interested in such an $I_K$-subrepresentation, as in some sense it inherits important information from $\pi$. We remark that, in the case of $GL_2 (F)$, the analogue of such Iwahori subrepresentation is the key ingredient in Hu's canonical diagram.

\subsection{$I^+ (\sigma, \pi) \cap I^-(\sigma, \pi)\neq 0$ for certain $\pi$}\label{subsection 3.2}

Assume $\pi$ is a smooth representation of $G$, containing a weight $\sigma$ of $K$. By Frobenius reciprocity, there is an induced $G$-map $\iota$ from $\text{ind}^G _K \sigma$ to $\pi$.

By the Cartan decomposition $G= \bigcup_{n\geq 0} K\alpha^n K$, we have a decomposition of $\text{ind}^G _K \sigma$ into $K$-representations:
\begin{center}
$\textnormal{ind}^G _K \sigma= \oplus_{n\geq 0} R_n (\sigma)$
\end{center}

Also we have the $I_K$-decomposition of $\text{ind}^G _K \sigma$ as follows:
\begin{center}
$\text{ind}^G _K \sigma= I^+ (\sigma) \oplus I^- (\sigma)$
\end{center}
where $I^+ (\sigma)= \oplus_{n\geq 0} R^+ _n (\sigma)$, $I^- (\sigma)= \oplus_{n\geq 1} R^- _{n-1} (\sigma)$.

For an $f\in \textnormal{ind}^G _K \sigma$, denote by $\overline{f}$ the image of $f$ in $\pi$. Denote by $I^+ (\sigma, \pi)$ (resp, $I^- (\sigma, \pi), R^+ _n (\sigma, \pi), R^- _{n-1} (\sigma, \pi), R_n (\sigma, \pi)$) the image of $I^+ (\sigma)$ (resp, $I^- (\sigma), R^+ _n (\sigma), R^- _{n-1} (\sigma), R_n (\sigma)$) in $\pi$.

\begin{proposition}\label{non-vanishing of f_0}
Assume further that the $G$-map $\iota$ from $\textnormal{ind}^{G}_K \sigma$ to $\pi$ is surjective and factors through a quotient $\textnormal{ind}^{G}_K \sigma/(P(T))$ for some polynomial $P$ of degree $\geq 1$. Then

$(1)$.~$\overline{f_0}\in \sum_{n\geq 0} R^- _n (\sigma, \pi)$;

$(2)$.~$\overline{f_1}\in \sum_{n\geq 0} R^+ _n (\sigma, \pi)$.
\end{proposition}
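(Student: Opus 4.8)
The plan is to prove both parts by working modulo $P(T)(\textnormal{ind}^G_K\sigma)$: since $\iota$ factors through $\textnormal{ind}^G_K\sigma/(P(T))$, we have $P(T)(\textnormal{ind}^G_K\sigma)\subseteq\ker\iota$, so for $(1)$ it suffices to show $f_0\in I^-(\sigma)+P(T)(\textnormal{ind}^G_K\sigma)$, and for $(2)$ that $f_1\in I^+(\sigma)+P(T)(\textnormal{ind}^G_K\sigma)$. Both will be obtained by realizing $f_0$ (resp. $f_1$) as a value of the ``inward'' operator $T^-$ and then using the reduction Lemma \ref{reduction lemma on negative part of tree} to confine the leakage across the center of the tree. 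Throughout I use that $T$ is a $G$-endomorphism, so it commutes with $P(T)$ and with the group action.

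For $(1)$, the key input is that $T^-\colon R^-_0(\sigma)\to R_0(\sigma)=R^-_{-1}(\sigma)$ is surjective (Corollary \ref{injectivity of T_- and surjectivity of T+}). Since $f_0\in R_0(\sigma)$, choose $w\in R^-_0(\sigma)$ with $T^-w=f_0$. Writing $T|_{R^-_0}=T^-+T^+$ as after Proposition \ref{image of the tree under T}, we get $Tw=f_0+T^+w$ with $T^+w\in R^-_0(\sigma)\oplus R^-_1(\sigma)\subseteq I^-(\sigma)$, hence $f_0=Tw-T^+w$. Now apply Lemma \ref{reduction lemma on negative part of tree}$(1)$ with $k=0$ to $w\in R^-_0(\sigma)$: there is $w'\in\oplus_{n\geq 1}R^-_n(\sigma)$ and $z\in\oplus_{n\geq 1}R^-_n(\sigma)$ with $w=w'+P(T)z$. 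Then $f_0=Tw'+P(T)(Tz)-T^+w$, and by Proposition \ref{image of the tree under T}$(3)$ the operator $T$ maps $\oplus_{n\geq 1}R^-_n(\sigma)$ into $\oplus_{m\geq 0}R^-_m(\sigma)=I^-(\sigma)$, so $Tw'\in I^-(\sigma)$. Therefore $f_0\equiv Tw'-T^+w\in I^-(\sigma)\pmod{P(T)(\textnormal{ind}^G_K\sigma)}$, which gives $\overline{f_0}\in\sum_{n\geq 0}R^-_n(\sigma,\pi)$.

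For $(2)$ the aim is the mirror construction, now crossing the center through Proposition \ref{image of the tree under T}$(1)$, which gives $T(R^+_0(\sigma))\subseteq R^+_1(\sigma)\oplus R^-_0(\sigma)$. If one can produce $w\in R^+_0(\sigma)$ whose $R^-_0(\sigma)$-component of $Tw$ equals $f_1$, then $f_1=Tw-(Tw)_{R^+_1}$ with $(Tw)_{R^+_1}\in R^+_1(\sigma)\subseteq I^+(\sigma)$; pushing $w$ outward by Lemma \ref{reduction lemma on negative part of tree}$(2)$ (so $w=w'+P(T)z$ with $w'\in\oplus_{n\geq 1}R^+_n(\sigma)$) and using Proposition \ref{image of the tree under T}$(2)$ to see $Tw'\in I^+(\sigma)$, one concludes $f_1\in I^+(\sigma)+P(T)(\textnormal{ind}^G_K\sigma)$ exactly as above. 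When $\sigma$ is a character this is immediate: by Remark \ref{value of lambda} the crossing sends $f_0$ to $\lambda_{\beta_K,\sigma}f_1$ with $\lambda_{\beta_K,\sigma}\neq 0$, so $w=\lambda_{\beta_K,\sigma}^{-1}f_0$ works.

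The main obstacle is precisely this last point in the generic case. The construction for $(1)$ relied on the surjectivity of $T^-\colon R^-_0(\sigma)\to R_0(\sigma)$, whereas the analogous crossing map $T^-\colon R^+_0(\sigma)\to R^-_0(\sigma)$ is \emph{not} surjective (the remark following Corollary \ref{injectivity of T_- and surjectivity of T+}); this asymmetry reflects the fact that the central sphere $R_0(\sigma)=R^+_0(\sigma)$ belongs to $I^+(\sigma)$ but not to $I^-(\sigma)$. When $\lambda_{\beta_K,\sigma}=0$ (i.e. $\dim\sigma>1$) the vector $f_0$ maps to $0$ under the crossing, so one must instead show directly that the $I_{1,K}$-invariant line $\langle f_1\rangle\subseteq R^-_0(\sigma)$ still lies in the image of $T^-\colon R^+_0(\sigma)\to R^-_0(\sigma)$; I expect this to be the crux, to be settled using the explicit formula for $T$ (\cite[(4)]{X2016}) or, failing that, by an independent argument reducing $(2)$ to a statement of the same shape as $(1)$ via the clean crossing $\beta_K\colon R^-_0(\sigma)\to R^+_1(\sigma)$ of Lemma \ref{transition2}.
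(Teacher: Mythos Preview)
Your argument for $(1)$ is correct and is in fact a mild streamlining of the paper's proof. The paper chooses the explicit element $w=\alpha f_0\in R^-_0(\sigma)$, which satisfies $T^-w=f_0$ by the formula for $T^-$ (since $j_\sigma v_0=v_0$), and then factors $P(x)=(x-\lambda)P_1(x)$ before invoking Lemma~\ref{reduction lemma on negative part of tree}(1) with $P_1$. Your version avoids picking a root: you apply Lemma~\ref{reduction lemma on negative part of tree}(1) with $P$ directly to $w$ and then use $Tw=f_0+T^+w$. One small caveat: you quote Corollary~\ref{injectivity of T_- and surjectivity of T+} for the surjectivity of $T^-\colon R^-_0(\sigma)\to R^-_{-1}(\sigma)$, but you only need that $f_0$ lies in the image, which is immediate from $T^-[\alpha,v_0]=[Id,v_0]=f_0$; this is all the paper actually uses as well.

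For $(2)$ there is a genuine gap, and you have correctly located it. When $\dim\sigma>1$ the vector $f_1$ is \emph{not} in the image of the crossing map $R^+_0(\sigma)\to R^-_0(\sigma)$ induced by $T$: that map lands in $\sigma^{I_{1,K}}\hookrightarrow R^-_0(\sigma)$ up to translation by $\alpha$, but on the $I_{1,K}$-line it is governed by $\lambda_{\beta_K,\sigma}$, which vanishes (Remark~\ref{value of lambda}); concretely $(Tf_0)^-=\lambda_{\beta_K,\sigma}f_1=0$. So the mirror of your argument for $(1)$ cannot be made to work by tweaking the choice of $w\in R^+_0(\sigma)$. The fix is exactly your last suggestion, and it is what the paper does: apply $\beta_K$ rather than $T$ to cross the center. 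From $(T-\lambda)f_0=f_{-1}-\lambda f_0+\lambda_{\beta_K,\sigma}f_1$ one gets
\[
(T-\lambda)\,\beta_K f_0=\beta_K f_{-1}-\lambda\,\beta_K f_0+\lambda_{\beta_K,\sigma}\,\beta_K f_1,
\]
and now Lemma~\ref{transition2} gives $\beta_K f_{-1}\in f_1+I^+(\sigma)$ while $\beta_K f_0,\ \beta_K f_1\in I^+(\sigma)$. Hence $f_1\in (T-\lambda)(\textnormal{ind}^G_K\sigma)+I^+(\sigma)$, and Lemma~\ref{reduction lemma on negative part of tree}(2) (applied to $\beta_K f_0\in R^+_0(\sigma)$ with the cofactor $P_1$) upgrades this to general $P$, just as in $(1)$.
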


\begin{proof}
It suffices to prove the following two statements:

$(1)'$.~~~$f_0\in P(T)(\textnormal{ind}^{G}_K \sigma)+ I^- (\sigma)$.

$(2)'$.~~~$f_1\in P(T)(\textnormal{ind}^{G}_K \sigma)+ I^+ (\sigma)$.

We start to prove $(1)'$. We pick a root $\lambda$ of $P(x)$ and write $P(x)= (x-\lambda)P_1 (x)$ for some polynomial $P_1 (x)$. Recall from $(1)$ of Proposition \ref{hecke operator formula}:
\begin{center}
$(T-\lambda) f_0= f_{-1} - \lambda f_0 +\lambda_{\beta, \sigma}f_1$.
\end{center}
We multiply both sides of above equality by $\alpha$, and we get:
\begin{center}
$(T-\lambda)\alpha f_0= \alpha f_{-1}- \lambda \alpha f_0 +\lambda_{\beta, \sigma}\alpha f_1$
\end{center}
Note that $\alpha f_0$ and $\alpha f_1$ lie in $I^- (\sigma)$. By (1) of Lemma \ref{transition}, the function $\alpha f_{-1} \in f_0 + I^- (\sigma)$. In all, we get
\begin{center}
$(T-\lambda)\alpha f_0 = f_0 +g_1$
\end{center}
for some function $g_1 \in I^- (\sigma)$.

 If $P_1 (x)$ is a constant, the preceding identity already gives us $(1)'$. Otherwise, using $(1)$ of Lemma \ref{reduction lemma on negative part of tree}, we find some
 $g_2\in \oplus_{n\geq 1}R^{-}_{n}(\sigma)$ such that
\begin{center}
 $\alpha f_0- g_2 \in P_1 (T) (\oplus_{n\geq 1}R^- _n (\sigma)),$
\end{center}
which gives that $f_0= (T-\lambda)g_2 -g_1 +P(T)f'$ for some $f'\in \oplus_{n\geq 1}R^- _n (\sigma)$, as desired for $(1)'$. We are done for $(1)$ .

We proceed to prove $(2)'$. Here we only need to prove in detail when $P$ is of degree one, and the general case follows from the same argument we have just done for $(1)'$, using $(2)$ of Lemma \ref{reduction lemma on negative part of tree}. Recall again that:
\begin{center}
$(T-\lambda)f_0= f_{-1}+ \lambda_{\beta_K, \sigma}f_1- \lambda f_0$.
\end{center}

By multiplying both sides of above equation by $\beta_K$, we get
\begin{center}
$(T-\lambda)\beta_K f_0= \beta_K f_{-1}+\lambda_{\beta_K, \sigma}\beta_K f_1 -\lambda \beta_K f_0$
\end{center}
Note that $\beta_K f_0 \in I^{+}(\sigma)$. By the first row in $(1)$ of Lemma \ref{transition2}, we have that $\beta_K f_{-1} \in f_1 +I^{+}(\sigma)$, whereas by the first row in $(2)$ of the same Lemma we have $\beta_K f_1 \in I^+ (\sigma)$. In summary, we get that
\begin{center}
$f_1\in (T-\lambda)+ I^+ (\sigma)$,
\end{center}
as desired.
\end{proof}

\begin{remark}\label{assumption on irreducibility}
The assumption on $\pi$ in the Proposition is at least satisfied in two cases: either $\pi$ is irreducible (\cite[Theorem 1.1]{X2018}) or is itself a spherical universal Hecke module.
\end{remark}

\begin{remark}
The Proposition says that the images of both functions $f_0$ and $f_1$ lie in $I^+ (\sigma, \pi)\cap I^- (\sigma, \pi)$. As the function $f_0$ generates $\textnormal{ind}^G _K \sigma$, under our assumption its image $\overline{f_0}$ in $\pi$ is non-zero, thus we have proved the representation $I^+ (\sigma, \pi)\cap I^- (\sigma, \pi)\neq 0$. However, for the function $f_1$, we can not say much about $\overline{f_1}$ at this stage, and we will address it elsewhere.
\end{remark}

Let $\phi_\sigma$ be the following $I_K$-homomorphism:
\begin{center}
$\phi_\sigma: \textnormal{ind}^{G}_K \sigma\twoheadrightarrow I^{-}(\sigma)\twoheadrightarrow I^{-}(\sigma,\pi)\hookrightarrow \pi,$
\end{center}
where the first surjection on the left is the natural projection from $\textnormal{ind}^{G}_K \sigma$ to $I^{-}(\sigma)$.

Denote by $R(\sigma, \pi)$ the kernel of $\iota$. Then, one has
\begin{lemma}\label{the image of phi_sigma}
 $I^{+}(\sigma, \pi)\cap I^{-}(\sigma, \pi)$ is the image of $R(\sigma, \pi)$ under $\phi_\sigma$.
\end{lemma}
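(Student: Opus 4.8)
The plan is to exploit the direct sum decomposition $\textnormal{ind}^G_K \sigma = I^+(\sigma) \oplus I^-(\sigma)$ and to chase elements through the definition of $\phi_\sigma$. First I would unwind $\phi_\sigma$: writing any $f \in \textnormal{ind}^G_K \sigma$ uniquely as $f = f^+ + f^-$ with $f^+ \in I^+(\sigma)$ and $f^- \in I^-(\sigma)$, the projection appearing in the definition of $\phi_\sigma$ sends $f$ to $f^-$, so that $\phi_\sigma(f) = \overline{f^-}$. This identification is the only structural input needed; everything else is a two-sided inclusion of subspaces of $\pi$.

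For the inclusion $\phi_\sigma(R(\sigma,\pi)) \subseteq I^+(\sigma,\pi) \cap I^-(\sigma,\pi)$, I would take $f \in R(\sigma, \pi) = \ker \iota$ and use $\overline{f} = 0$, that is, $\overline{f^+} + \overline{f^-} = 0$. On the one hand $\phi_\sigma(f) = \overline{f^-}$ lies in $I^-(\sigma, \pi)$ by definition; on the other hand the relation gives $\overline{f^-} = -\overline{f^+}$, which lies in $I^+(\sigma, \pi)$. Hence $\phi_\sigma(f)$ lies in the intersection.

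For the reverse inclusion, I would start from an arbitrary $w \in I^+(\sigma, \pi) \cap I^-(\sigma, \pi)$ and use the two membership conditions to write $w = \overline{g^+}$ with $g^+ \in I^+(\sigma)$ and $w = \overline{h^-}$ with $h^- \in I^-(\sigma)$. Setting $f = h^- - g^+$, one has $\overline{f} = w - w = 0$, so $f \in R(\sigma, \pi)$; moreover the $I^-(\sigma)$-component of $f$ is exactly $h^-$, whence $\phi_\sigma(f) = \overline{h^-} = w$. This exhibits $w$ as an element of $\phi_\sigma(R(\sigma,\pi))$ and closes the argument.

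I do not anticipate a genuine obstacle here: the statement is essentially a linear-algebra identity encoding that the intersection of the two images records precisely the relations in $R(\sigma,\pi)$ as seen by the projection onto the negative part. The only point requiring care is the bookkeeping of the direct-sum components and the sign in the middle step; no input beyond the decomposition $\textnormal{ind}^G_K \sigma = I^+(\sigma) \oplus I^-(\sigma)$ and the $I_K$-equivariance of $\iota$ is required.
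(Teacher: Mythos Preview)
Your proof is correct and is precisely the formal element-chase one expects. The paper does not actually spell out a proof: it simply notes that the observation is ``formal to check'' and attributes it to Hu \cite[Lemma 3.11]{Hu12}, so your argument is exactly the verification the paper omits.
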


\begin{proof}
 This key observation, even formal to check, is due to Y.Hu (\cite[Lemma 3.11]{Hu12}).
\end{proof}

\subsection{Finiteness of $R(\sigma, \pi) \Rightarrow \textnormal{dim}_{\overline{\mathbf{F}}_p}~I^+ (\sigma, \pi) \cap I^-(\sigma, \pi)< \infty$}
In this subsection we prove the following, which is the counterpart in our case of one side of Hu's criteria for finite presentation of smooth representations of $GL_2 (F)$.

Recall that we say $\pi$ is \emph{finitely presented}, if the $G$-representation $R(\sigma, \pi)$ is a finitely generated over $\overline{\mathbf{F}}_p [G]$.

\begin{proposition}\label{Hu's criteria}
Let $\pi$ be a smooth representation of $G$ and is a $G$-quotient of $\textnormal{ind}^{G}_K \sigma$.  Then the following condition $(2)$ implies $(1)$~$:$

$(1).$~~$I^{+}(\sigma, \pi)\cap I^{-}(\sigma, \pi)$ is of finite dimension~$;$

$(2).$~~$R(\sigma, \pi)$ is of finite type as a $\overline{\mathbf{F}}_p[G]$-module.
\end{proposition}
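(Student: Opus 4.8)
The plan is to reduce the statement to a finiteness assertion about the $I_K$-map $\phi_\sigma$ and then settle that assertion by a counting argument on the tree. By Lemma \ref{the image of phi_sigma} the space $I^{+}(\sigma,\pi)\cap I^{-}(\sigma,\pi)$ is exactly $\phi_\sigma(R(\sigma,\pi))$, so it suffices to prove $\dim_{\overline{\mathbf{F}}_p}\phi_\sigma(R(\sigma,\pi))<\infty$. Assuming $(2)$, choose generators $r_1,\dots,r_s$ with $R(\sigma,\pi)=\sum_{i=1}^s \overline{\mathbf{F}}_p[G]\,r_i$; since each $r_i$ is a finite sum of functions on spheres, there is an $N$ with $r_i\in\bigoplus_{0\le n\le N}R_n(\sigma)$ for all $i$. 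As $\phi_\sigma$ is $\overline{\mathbf{F}}_p$-linear and $R(\sigma,\pi)$ is spanned by the translates $\{g\cdot r_i: g\in G,\ 1\le i\le s\}$, we have $\phi_\sigma(R(\sigma,\pi))=\langle\,\phi_\sigma(g\cdot r_i)\,\rangle_{g,i}$, and the whole problem becomes: show that only boundedly many translates contribute and that those which do span a finite-dimensional space.

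The first key step is a vanishing criterion. Write $\mathrm{pr}^{\pm}$ for the projections of $\mathrm{ind}^G_K\sigma=I^{+}(\sigma)\oplus I^{-}(\sigma)$ onto the two summands, so that $\phi_\sigma=\iota\circ\mathrm{pr}^{-}$. For $f\in R(\sigma,\pi)=\ker\iota$ one has $0=\iota(f)=\iota(\mathrm{pr}^{+}f)+\iota(\mathrm{pr}^{-}f)$, whence $\phi_\sigma(f)=\iota(\mathrm{pr}^{-}f)=-\iota(\mathrm{pr}^{+}f)$; in particular $\phi_\sigma(f)\neq 0$ forces $\mathrm{pr}^{+}f\neq 0$ \emph{and} $\mathrm{pr}^{-}f\neq 0$. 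Geometrically, from the definitions of $R^{\pm}_n(\sigma)$ together with Lemma \ref{action of G on tree}, the summands $I^{+}(\sigma)$ and $I^{-}(\sigma)$ are precisely the functions supported on the two half-trees $\mathcal{T}^{+}\ni v_K$ and $\mathcal{T}^{-}$ obtained by deleting the edge $e$ adjacent to $v_K$ that is fixed by $I_K$. Thus the criterion reads: $\phi_\sigma(f)\neq 0$ only if $\mathrm{supp}(f)$ meets both $\mathcal{T}^{+}$ and $\mathcal{T}^{-}$, i.e. only if the support of $f$ straddles $e$.

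The second key step applies this to $f=g\cdot r_i$ and exploits local finiteness. Since $r_i$ is supported in the ball of radius $2N$ about $v_K$ and $G$ acts on the tree by isometries, the translate $g\cdot r_i$ is supported in a ball of radius $2N$ about some vertex $x_g$ of the same type as $v_K$; for this ball to straddle $e$ the vertex $x_g$ must lie within distance $2N$ of $e$. Because the residue field is finite the tree is locally finite, so only finitely many vertices satisfy this. For each admissible $x$, the fibre $\{g\in G : x_g=x\}$ is a coset of the compact stabiliser of a vertex (a conjugate of a maximal compact), and by smoothness $r_i$ is fixed by an open subgroup of finite index in that stabiliser; hence $\{g\cdot r_i : x_g=x\}$ is a finite set. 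Running over the finitely many admissible $x$ and the finitely many indices $i$ produces a finite-dimensional space $V_0$ containing every translate $g\cdot r_i$ with $\phi_\sigma(g\cdot r_i)\neq 0$. Therefore $\phi_\sigma(R(\sigma,\pi))=\phi_\sigma(V_0)$ is finite-dimensional, which is $(1)$.

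I expect the main obstacle to be the geometric bookkeeping of the second paragraph: identifying $I^{\pm}(\sigma)$ with the two half-trees cut out by $e$ and verifying that $\phi_\sigma(f)=0$ whenever $\mathrm{supp}(f)$ fails to straddle $e$. This is exactly where Lemma \ref{action of G on tree}, which records how $\alpha^{\pm n}$ and $\beta_K$ shuffle the pieces $R^{\pm}_n(\sigma)$ between the two halves, carries the argument, and it is the analogue of the decisive step in Hu's proof of his finite-presentation criterion for $GL_2$ (\cite{Hu12}). The remaining inputs—local finiteness of the tree (coming from $q<\infty$) and smoothness of $\pi$—are routine once the straddling criterion is in place.
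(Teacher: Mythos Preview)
Your argument is correct and reaches the same conclusion as the paper, but by a genuinely different route. Both proofs start from the identity $\phi_\sigma(f)=-\iota(\mathrm{pr}^{+}f)$ for $f\in R(\sigma,\pi)$ and reduce to controlling $\phi_\sigma(g\cdot r_i)$ for a finite generating set. From there the paper proceeds algebraically: it uses the Iwahori decomposition $G=\bigcup I_K g I_K$ to reduce to $g\in\{\alpha^{\pm n},\beta_K\alpha^{\pm n}\}$, and then invokes the explicit bookkeeping of Lemma~\ref{action of G on tree} to show that $(\alpha^n f)^{+}$ (respectively $(\alpha^{-n}f)^{-}$) remains inside the fixed ball $\bigoplus_{k\le m}R_k(\sigma)$, so that $\phi_\sigma(g\cdot r_i)$ always lands in the single finite-dimensional space $M=\iota\bigl(\bigoplus_{k\le m}R_k(\sigma)\bigr)$. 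Your approach is geometric: you interpret $I^{\pm}(\sigma)$ as the functions supported on the two half-trees cut out by the $I_K$-fixed edge, deduce that $\phi_\sigma(g\cdot r_i)\neq 0$ forces the support of $g\cdot r_i$ to straddle that edge, and then use local finiteness of the tree together with smoothness to bound the set of contributing translates. What your method buys is conceptual clarity and independence from the detailed coset calculations of Lemma~\ref{action of G on tree}; what the paper's method buys is a sharper target, namely the explicit space $M$, which it records separately as Remark~\ref{observation from finiteness argument} for later use. The identification of $I^{\pm}(\sigma)$ with the half-trees, which you flag as the main obstacle, is indeed standard and is exactly what the $I_K$-orbit description $R^{+}_n(\sigma)=[N_{n_K}\alpha^{-n},\sigma]$, $R^{-}_{n-1}(\sigma)=[N'_{m_K}\alpha^{n},\sigma]$ encodes.
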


\begin{proof}

Assume $\{h_1, h_2, \cdots, h_l\}$ is a finite set in $R(\sigma, \pi)$ which generates it over $\overline{\mathbf{F}}_p [G]$.
For a large enough $m\geq 1$, all the $h_i$ lie in $\oplus_{0\leq k \leq m} R_k (\sigma)$. Let $M$ be the image of $\oplus_{0\leq k \leq m} R_k (\sigma)$ in $\pi$. By Lemma \ref{the image of phi_sigma}, we only need to show $\phi_{\sigma}(gh_i)\in M$ for all $g\in G$, as $M$ is of finite dimension.

Recall the Iwahori decomposition of $G$:
\begin{center}
$G= \bigcup_{g\in \mathcal{M}} I_K g I_K$
\end{center}
where $\mathcal{M}= \{\alpha^n, \beta_K \alpha^n\}_{n\in \mathbb{Z}}$. As the map $\phi_\sigma$ is an $I_K$-map, and the spaces $\oplus_{0\leq k \leq m} R_k (\sigma)\cap R(\sigma, \pi)$ and $M$ are both $I_K$-stable, we reduce us to the following Lemma:

\begin{lemma}\label{phi_sigma(alpha^n f)in M}
 For any $n\in \mathbb{Z}$, and any $f\in \oplus_{0\leq k \leq m} R_k (\sigma)\cap R(\sigma, \pi)$, both $\phi_\sigma (\alpha^n f)$ and $\phi_\sigma (\beta_K \alpha^n f)$ lie in $M$.
 \end{lemma}

\begin{proof}
We deal with case $n\geq 1$ in detail, and the remaining case $n< 0$ can be done in the same manner.

Note firstly that for $f\in I^{-}(\sigma)$, we have $\alpha^n f\in I^{-}(\sigma)$. By the first list of Lemma \ref{action of G on tree}, we see that for $f\in \oplus_{0\leq k \leq m}R_k (\sigma)$ and $n> m$, we have $(\alpha^n f^{+})^+ =0$, which gives $(\alpha^n f)^+=0$. When $n\leq m$, we also have $(\alpha^n f)^+ \in \oplus_{0\leq k \leq m}R_k (\sigma)$ by the same list. If $f$ is furthermore in $R(\sigma, \pi)$, we get $\phi_\sigma (\alpha^n f)= -\overline{(\alpha^n f)^+}\in M$ immediately.

We proceed to consider $\phi_\sigma (\beta_K \alpha^n f)$. Note that $\beta_K \alpha^n= \alpha^{-n}\beta_K$ and the matrix $\beta_K$ stabilizes the space $\oplus_{0\leq k \leq m} R_k (\sigma)\cap R(\sigma, \pi)$. We only need to consider $\phi_\sigma (\alpha^{-n}f)$. Similarly for $f\in I^+(\sigma)$, we have $\alpha^{-n} f\in I^+ (\sigma)$. By the second list of Lemma \ref{action of G on tree}, for $f\in \oplus_{0\leq k \leq m}R_k (\sigma)$ and $n\geq m$, we have $(\alpha^{-n} f^-)^- =0$, which gives $(\alpha^{-n} f)^-=0$. When $n< m$, we have $(\alpha^{-n} f)^- \in \oplus_{0\leq k \leq m}R_k (\sigma)$ by the same list. If $f$ is also in $R(\sigma, \pi)$, we get $\phi_\sigma (\alpha^{-n} f)= \overline{(\alpha^{-n} f)^-}\in M$.
\end{proof}

The argument of the proposition is now complete.
\end{proof}

\begin{remark}\label{observation from finiteness argument}
We have indeed proved the following: given an $f\in R(\sigma, \pi)$, let $m$ be the least integer such that
\begin{center}
$f\in \oplus_{0\leq k \leq m} R_k (\sigma)$.
\end{center}
Denote by $M$ the image of the space $\oplus_{0\leq k \leq m} R_k (\sigma)$ in $\pi$. Then, we have
\begin{center}
$\phi_\sigma (g\cdot f) \in M$, for any $g\in G$.
\end{center}
\end{remark}

\begin{remark}
  For the group $GL_2 (F)$, \textnormal{Hu} has indeed proved that the two conditions in the Proposition are equivalent (\cite[Theorem 4.3]{Hu12}), but in our case we are not able to prove $(1)$ implies $(2)$.
\end{remark}

\subsection{The $I_{1, K}$-invariant linear maps $S_K$ and $S_-$}
The purpose of this part is to study some partial linear operators on a smooth representation $\pi$, motivated by the Hecke operator $T$ (Proposition \ref{hecke operator formula}). We show that they satisfy some invariant properties, which will become useful in our later applications. We then study in detail how the $I_K$-morphism $\phi_\sigma$ (subsection \ref{subsection 3.2}) behaves with respect to such invariant linear operators.
\begin{definition}
Let $\pi$ be a smooth representation of $G$. We define:
\begin{center}
$S_K: \pi^{N'_{m_K}} \rightarrow \pi^{N_{n_K}}$,

$v \mapsto \sum_{u \in N_{n_K}/N_{n_K +1}}~u \beta_K v$.
\end{center}

\begin{center}
$S_-: \pi^{N_{n_K}} \rightarrow \pi^{N'_{m_K}}$,

$v\mapsto \sum_{u'\in N' _{m_K}/N' _{m_K +1}}~u'\beta_K\alpha^{-1}v$
\end{center}

\end{definition}

A simple check shows that both $S_K$ and $S_-$ are well-defined. We summarize the main properties of $S_K$ and $S_-$ as follows:
\begin{proposition}\label{S_1 and S_2}
We have:

$(1)$.~~Let $h\in H_1 =I_{1, K}\cap H$. Then $S_K (hv)= h^s\cdot S_K v$, for $v\in \pi^{N'_{m_K}}$, and $S_- (hv)= h^s\cdot S_- v$, , for $v\in \pi^{N_{n_K}}$, where $h^s$ is short for $\beta_K h \beta_K$.

$(2)$.~~If $v$ is fixed by $I_{1, K}$, the same is true for $S_K \cdot v$ and $S_- \cdot v$.
\end{proposition}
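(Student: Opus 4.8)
The plan is to prove (1) by pushing the torus element across $\beta_K$ (resp.\ $\beta_K\alpha^{-1}$) and then reindexing the defining sum, and to deduce (2) from (1) together with two direct invariance checks. Throughout I would use the elementary facts that $\beta_K^2=1$, that conjugation by $\beta_K$ preserves $H$ (so $h^s=\beta_K h\beta_K\in H$) and preserves $H_1=H\cap I_{1,K}$, that $\alpha$ commutes with every element of $H$, and that conjugation by an element of $H_1$ scales the ``$y$-coordinate'' of $n(x,y)$ (resp.\ $n'(x,y)$) by a unit of $\mathfrak{o}_F$, hence preserves each filtration subgroup $N_k$ and $N'_k$. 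I would also use the two facts underlying the well-definedness of $S_K$ and $S_-$: for $v\in\pi^{N'_{m_K}}$ one has $\beta_K v\in\pi^{N_{n_K+1}}$, and for $v\in\pi^{N_{n_K}}$ one has $\beta_K\alpha^{-1}v\in\pi^{N'_{m_K+1}}$; these follow from $\beta_K N_{n_K+1}\beta_K=N'_{m_K}$ and $\beta_K\alpha^{-1}N_{n_K}\alpha\beta_K=N'_{m_K+1}$, which in turn rest on $n_K+m_K=1$ (Remark \ref{n_K and m_K}).

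For (1), since $\beta_K h=h^s\beta_K$ and $\beta_K\alpha^{-1}h=h^s\beta_K\alpha^{-1}$ (the latter because $\alpha^{-1}h=h\alpha^{-1}$), I would write
\[
S_K(hv)=\sum_{u\in N_{n_K}/N_{n_K+1}}u\,h^s\,\beta_K v=h^s\sum_{u}\big((h^s)^{-1}u\,h^s\big)\beta_K v,
\]
and similarly for $S_-$. Because $h^s\in H_1$ normalises both $N_{n_K}$ and $N_{n_K+1}$, the assignment $u\mapsto (h^s)^{-1}u\,h^s$ is a bijection of $N_{n_K}/N_{n_K+1}$; since $\beta_K v$ is fixed by $N_{n_K+1}$, each summand depends only on the class of its index, so the reindexed sum is again $S_K v$, giving $S_K(hv)=h^s S_K v$. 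The statement for $S_-$ is identical, using that $h^s$ preserves the filtration on $N'$ and that $\beta_K\alpha^{-1}v$ is fixed by $N'_{m_K+1}$.

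For (2) it suffices to check that $S_K v$ (resp.\ $S_- v$) is fixed by each of $N_{n_K}$, $H_1$ and $N'_{m_K}$, which together generate $I_{1,K}$. Invariance under the \emph{parallel} unipotent --- $N_{n_K}$ for $S_K$, $N'_{m_K}$ for $S_-$ --- is immediate from the averaging: left translation by such an element permutes the cosets indexing the sum, while $\beta_K v$ (resp.\ $\beta_K\alpha^{-1}v$) is fixed by the next filtration step, so the sum is unchanged (this is exactly why the codomain is $\pi^{N_{n_K}}$, resp.\ $\pi^{N'_{m_K}}$, and needs only $v\in\pi^{N'_{m_K}}$). Invariance under $H_1$ follows from part (1): if $v$ is $I_{1,K}$-fixed then $S_K v=S_K(hv)=h^s S_K v$ for every $h\in H_1$, and as $h$ ranges over $H_1$ so does $h^s$.

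The main obstacle is the remaining \emph{opposite} invariance --- of $S_K v$ under $N'_{m_K}$ and of $S_- v$ under $N_{n_K}$ --- since there the translating element does not commute with the summation variable. I would handle it by an Iwahori factorisation on the big cell: for $n'\in N'_{m_K}$ and $u\in N_{n_K}$ write $n'u=u_1 h_1 n'_1$ with $u_1\in N_{n_K}$, $h_1\in H$, $n'_1\in N'$ (computable from \eqref{useful identity} and the explicit matrix product). The heart of the matter is a valuation count, using $n_K+m_K=1$, showing that $h_1\in H_1$ and $n'_1\in N'_{m_K}$ --- so that $h_1 n'_1\in I'_{1,K}=\beta_K I_{1,K}\beta_K$ fixes $\beta_K v$ --- while $u_1\equiv u \pmod{N_{n_K+1}}$. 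Granting this, each term satisfies $n'u\,\beta_K v=u_1 h_1 n'_1\,\beta_K v=u_1\,\beta_K v=u\,\beta_K v$, so $n'$ fixes $S_K v$ term by term; the case of $S_-$ under $N_{n_K}$ is the mirror computation with $\beta_K\alpha^{-1}v$ in place of $\beta_K v$. The delicate book-keeping of which congruence subgroup each factor lands in, \emph{uniformly} for $K_0$ (where $\beta_K=\beta$) and $K_1$ (where $\beta_K=\beta'$), is where the real work lies; everything else is formal.
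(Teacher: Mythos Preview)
Your proposal is correct and follows essentially the same route as the paper. Part (1) is handled identically by both. For part (2), the paper also reduces to the ``opposite'' unipotent and uses the Iwahori factorisation $u'u=u_1 h u'_1$ (packaged as a separate Lemma~\ref{exchange lemma}, with its mirror Lemma~\ref{exchange lemma 2}), verifying via the explicit matrix formulae that $h\in H_1$ and $u'_1\in N'_{m_K}$. The only minor difference is that the paper concludes by showing the map $u\mapsto u_1$ is a \emph{bijection} of $N_{n_K}/N_{n_K+1}$ and then reindexes the sum, whereas you assert the sharper term-by-term congruence $u_1\equiv u\pmod{N_{n_K+1}}$; both claims follow from the same valuation estimates on the explicit formulae for $(x_2,y_2)$, and the paper's version is later reused to compute $R^{\pm}_n(\sigma)^{N_{n_K}}$ in Proposition~\ref{N-invariants of tree}.
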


\begin{proof}
For $(1)$, we note that the group $H_1$ acts on $\pi^{N_{n_K}}$ and $\pi^{N'_{m_K}}$, as it normalizes $N_{n_K}$ and $N'_{m_K}$. The statement then follows from the definitions.

For $(2)$, we need some preparation, and we sort them out as two lemmas:

\begin{lemma}\label{exchange lemma}
For a $u'\in N'_{m_K}, u\in N_{n_K}$, we have:

$(1)$. ~The following identity
\begin{center}
$u'u= u_1 h u'_1$
\end{center}
holds for a unique $u_1\in N_{n_K}, h\in H_1, u'_1 \in N'_{m_K}$.

$(2)$. When $u$ goes through $N_{n_K}/ N_{n_K +m}$, the element $u_1$ also goes through $N_{n_K}/ N_{n_K +m}$, for any $m \geq 1$.
\end{lemma}

\begin{proof}
The uniqueness statement is clear, and only the existence needs to be proved.

Assume $u =n(x_1, y_1) \in N, u' \in n'(x, y) \in N'$. Then, if $1+x x_1 + \overline{y y_1}\in E^\times$, we have
 \begin{center}
 $u'u= u_1 h u'_1$，
\end{center}
where $h u'_1$ is the following lower triangular matrix:
\begin{center}
$\begin{pmatrix}  \frac{1}{1+x x_1 + \overline{y y_1}} & 0 & 0  \\ \frac{x-\overline{x_1 y}}{1+\overline{x x_1} +y y_1 } & \frac{1+x x_1+ \overline{y y_1}}{1+ \overline{x x_1} +y y_1} & 0\\
y & yx_1-\bar{x} & 1+ \overline{x x_1}+ y y_1
\end{pmatrix},$
\end{center}
and $u_1= n(x_2, y_2) \in N$, in which $x_2, y_2$ are given by:
\begin{center}
$x_2= \frac{x_1- \overline{y_1 x}}{1+x x_1+\overline{y y_1}}, y_2= \frac{y_1}{1+\overline{x x_1}+y y_1}.$
\end{center}

Under our assumption here, the condition $1+x x_1 + \overline{y y_1}\in E^\times$ holds automatically. The existence is established.

\medskip
We continue to prove $(2)$. We start by the following observation: from the formula of $y_2$ given in the argument of Lemma \ref{exchange lemma}, we see
\begin{center}
$y_2 =y_1 +$ higher valuation terms,
\end{center}
as $u=n(x_1, y_1)\in N_{n_K}, u'= n'(x, y)\in N'_{m_K}$. That is to say $u\in N_{n_K +m} \Leftrightarrow u_1\in N_{n_K +m}$ for any integer $m\geq 0$.

Assume now for an another $w\in N_{n_K}$, we have a decomposition $u'w= u_2 b''$ for $u_2\in N_{n_K}$ and $b''\in B'$. We have to prove:
\begin{center}
 $u_2\in u_1 N_{n_K +m}$ implies $w\in u N_{n_K +m}$.
\end{center}
 Write $u^{-1}_1 u_2$ as $u_3$. A little algebraic transform gives:
\begin{center}
$w= u \cdot b'^{-1}u_3 b''$
\end{center}
We need to check that the element $b'^{-1}u_3 b'' \in N_{n_K}$, denoted by $u_4$, lies in $N_{n_K +m}$. The element $b'$ can be written as $h\cdot u'_1$, for a diagonal matrix $h\in H_1$ and $u'_1 \in N'_{m_K}$. We therefore get
\begin{center}
$u'_1 u_4= (h^{-1}u_3 h)\cdot h^{-1}b''$,
\end{center}
where the right hand side is a decomposition of $u'_1 u_4$ given in last Lemma. The uniqueness of such a decomposition implies our observation at the beginning can be applied: we have
$u_4\in N_{n_K +m}$ iffy $h^{-1}u_3 h\in N_{n_K +m}$ for any $m\geq 0$. Our assumption is that $u_3= u^{-1}_1 u_2\in N_{n_K +m}$, which is the same as $h^{-1}u_3 h\in N_{n_K +m}$ ($h\in H_1$). We are done.
\end{proof}

\begin{lemma}\label{exchange lemma 2}
For a $u'\in N'_{m_K}, u\in N_{n_K}$, we have

$(1)$.~The following identity
\begin{center}
$u u'= u'_1 h u_1$
\end{center}
holds for a unique $u'_1 \in N'_{m_K}, h\in H_1, u_1\in N_{n_K}$.

$(2)$.~When $u'$ goes through $N'_{m_K}/ N'_{m_K +m}$, the element $u'_1$ also goes through $N'_{m_K}/ N'_{m_K +m}$, for any $m \geq 1$.
\end{lemma}

\begin{proof}
The argument of last Lemma can be slightly modified to work for the current case.
\end{proof}

\medskip
We proceed to complete the argument of $(2)$ of the Proposition.

By $(1)$ and the decomposition of $I_{1, K}= N'_{m_K} \times H_1 \times N_{n_K}$, it suffices to check that, for $u'= n'(x, y)\in N'_{m_K}$, the element $u'\cdot S_K v$
\begin{center}
$u'\cdot S_K v= \sum_{u\in N_{n_K}/N_{n_K +1}}~u'u\beta_K v$
\end{center}
is still equal to $S_K v=\sum_{u\in N_{n_K}/N_{n_K +1}}~u\beta_K v$. By $(1)$ of Lemma \ref{exchange lemma}, the right hand side of above sum is equal to:
\begin{center}
$\sum_{u\in N_{n_K}/N_{n_K +1}}~u_1 h u'_1 \beta_K v$.
\end{center}
 We get:
\begin{center}
$u'\cdot S_+ v=\sum_{u\in N_{n_K}/N_{n_K +1}}~u_1\beta_K (\beta_K h u'_1 \beta_K) v =\sum_{u\in N_{n_K}/N_{n_K +1}}~u_1\beta_K v$,
\end{center}
which is just the same as $\sum_{u_1\in N_{n_K}/N_{n_K +1}}~u_1\beta_K v$, by $(2)$ of Lemma \ref{exchange lemma}. The argument for the statement $S_K v \in \pi^{I_{1, K}}$ for $v\in \pi^{I_{1, K}}$ is complete now.

\medskip
Using Lemma \ref{exchange lemma 2}, the previous argument can be slightly modified to work for the statement $S_- v \in \pi^{I_{1, K}}$ for $v\in \pi^{I_{1, K}}$.

We are done for the Proposition.
\end{proof}

\medskip
Recall that the space $R^+ _n (\sigma)^{I_{1, K}}$ ($n\geq 0$) and $R^- _{n-1} (\sigma)^{I_{1, K}}$ ($n\geq 1$) are both one-dimensional (\cite[Remark 3.8]{X2016}), and we will improve it slightly here, as an application of the stuff we have just carried out:

\begin{proposition}\label{N-invariants of tree}
We have:

 $(1)$.~For $n\geq 0$, $R^+ _n (\sigma)^{N_{n_K}}= \langle f_{-n}\rangle_{\overline{\mathbf{F}}_p}$.

$(2)$.~For $n\geq 1$, $R^- _{n-1} (\sigma)^{N'_{m_K}}= \langle f_n \rangle_{\overline{\mathbf{F}}_p}$.
\end{proposition}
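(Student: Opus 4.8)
The plan is to realize each $I_K$-stable space as a representation induced from a smaller compact subgroup and to compute its invariants by Frobenius reciprocity. I will prove $(1)$ in detail; $(2)$ follows by the symmetric argument with $N$ and $N'$ interchanged. Since $N_{n_K} = N\cap I_{1,K}\subseteq I_{1,K}$ and $f_{-n}$ lies in the line $R^+_n(\sigma)^{I_{1,K}}$ (part of the basis $\{f_{-n}, f_n\}$ of $R_n(\sigma)^{I_{1,K}}$ recalled in Remark \ref{Question}), the inclusion $\langle f_{-n}\rangle_{\overline{\mathbf{F}}_p}\subseteq R^+_n(\sigma)^{N_{n_K}}$ is immediate. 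The whole content is the reverse inclusion, for which it suffices to show $\dim_{\overline{\mathbf{F}}_p} R^+_n(\sigma)^{N_{n_K}} = 1$.

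For this I use the explicit description $R^+_n(\sigma) = [N_{n_K}\alpha^{-n}, \sigma]$. Set $J = N_{n_K}\cap\alpha^{-n}K\alpha^n$. The functions $[u\alpha^{-n}, v]$ (for $u\in N_{n_K}$) are supported on the left cosets $K\alpha^n u^{-1}$, two of which coincide exactly when the corresponding elements $u$ agree modulo $J$; since $G$ acts by left translation, $g'\cdot[g,v]=[g'g,v]$, the group $N_{n_K}$ permutes these supports by $u\mapsto u_0 u$, transitively, with stabiliser of the base coset $K\alpha^n$ equal to $J$. Hence $R^+_n(\sigma)\cong\mathrm{Ind}_J^{N_{n_K}}(\sigma_\ast)$ as $N_{n_K}$-representations, where $\sigma_\ast$ is the fibre $\{[\alpha^{-n}, v] : v\in\sigma\}\cong\sigma$ on which $j\in J$ acts by $[\alpha^{-n},v]\mapsto[j\alpha^{-n},v] = [\alpha^{-n},\sigma(\alpha^n j\alpha^{-n})v]$, the matrix $\alpha^n j\alpha^{-n}$ lying in $K$ by the definition of $J$. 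Frobenius reciprocity for invariants of an induced representation then gives
\begin{center}
$R^+_n(\sigma)^{N_{n_K}}\cong(\sigma_\ast)^J = \sigma^{\alpha^n J\alpha^{-n}}$.
\end{center}
It remains to identify $\alpha^n J\alpha^{-n}$. Using $\alpha^n n(x,y)\alpha^{-n} = n(\varpi_E^{-n}x, \varpi_E^{-2n}y)$ and intersecting $\alpha^n N_{n_K}\alpha^{-n}$ with $K$, a short valuation computation yields $\alpha^n J\alpha^{-n} = N\cap K$. Since $\sigma$ is inflated from $\Gamma_K$ and $N\cap K$ maps onto the unipotent radical $\mathbb{U}$ of the Borel $\mathbb{B}$, we obtain $\sigma^{\alpha^n J\alpha^{-n}} = \sigma^{\mathbb{U}} = \sigma^{I_{1,K}}$, which is one-dimensional. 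This proves $\dim_{\overline{\mathbf{F}}_p} R^+_n(\sigma)^{N_{n_K}} = 1$ and hence $(1)$.

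For $(2)$, the same scheme applied to $R^-_{n-1}(\sigma) = [N'_{m_K}\alpha^n,\sigma]$ realizes it as $\mathrm{Ind}_{J'}^{N'_{m_K}}(\sigma_\ast)$ with $J' = N'_{m_K}\cap\alpha^nK\alpha^{-n}$, and the analogous computation gives $\alpha^{-n}J'\alpha^n = N'\cap K$, which maps onto the opposite unipotent $\mathbb{U}'$; thus $R^-_{n-1}(\sigma)^{N'_{m_K}} = \sigma^{\mathbb{U}'} = \sigma^{I'_{1,K}}$, again one-dimensional, and $f_n$ (being $I_{1,K}$-invariant, hence $N'_{m_K}$-invariant) spans it. The main obstacle I anticipate is the bookkeeping in the conjugation/intersection computation $\alpha^n J\alpha^{-n} = N\cap K$: one must treat $K_0$ and $K_1$ separately, keep track of the Hermitian-form constraint $x\bar{x} + y + \bar{y} = 0$ relating the two coordinates of $n(x,y)$, and check the small values of $n$ (in particular $n=0$, where $J = N_{n_K}$ already maps onto $\mathbb{U}$). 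I would also need to confirm, as for $\sigma^{I_{1,K}}$ in \cite[Theorem 6.12]{C-E2004}, that $\sigma^{I'_{1,K}}$ is one-dimensional, which follows by applying the same result to the opposite Borel.
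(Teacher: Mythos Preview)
Your proof is correct, but it follows a genuinely different route from the paper's. The paper does not pass through Frobenius reciprocity at all: instead it shows directly that any $N_{n_K}$-invariant function in $R^+_n(\sigma)$ is automatically $I_{1,K}$-invariant. Concretely, since such a function is determined by its values $f(\alpha^n u)$ for $u\in N_{n_K}$, the paper checks invariance under $b'\in N'_{m_K}\times H_1$ by writing $ub' = b'_1 u_1$ via the exchange identity of Lemma~\ref{exchange lemma 2} and observing that $\alpha^n b'_1 \alpha^{-n}\in K$ acts trivially on the value (because $\sigma$ is inflated from $\Gamma_K$); the $N_{n_K}$-invariance then absorbs $u_1$. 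This reduces to the known one-dimensionality of $R^+_n(\sigma)^{I_{1,K}}$.

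Your approach, identifying $R^+_n(\sigma)\cong \mathrm{Ind}_{N_{n_K+2n}}^{N_{n_K}}\sigma_\ast$ and invoking Frobenius reciprocity to get $R^+_n(\sigma)^{N_{n_K}}\cong \sigma^{N_{n_K}}=\sigma^{\mathbb U}$, is cleaner in that it bypasses the exchange lemmas entirely and lands directly on the standard one-dimensionality of $\mathbb U$-invariants for weights. The paper's argument, on the other hand, proves the slightly sharper intermediate statement that $N_{n_K}$-invariance already forces $I_{1,K}$-invariance on this space, and it reuses machinery (Lemmas~\ref{exchange lemma} and~\ref{exchange lemma 2}) developed elsewhere for the operators $S_K$, $S_-$. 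Your worry about the ``bookkeeping'' in $\alpha^n J\alpha^{-n}=N\cap K$ is unfounded: since $J=N_{n_K}\cap\alpha^{-n}K\alpha^n$ lies in $N$ and $N\cap K=N_{n_K}$, one gets $J=N_{n_K+2n}$ and hence $\alpha^n J\alpha^{-n}=N_{n_K}$ uniformly for both $K_0$ and $K_1$ and all $n\ge 0$, with no case analysis needed.
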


\begin{proof}
We only prove $(1)$ in detail, and the argument for $(2)$ is completely parallel.

Note that the group $N_{n_K}$ is only a closed subgroup of $I_{1, K}$. Let $f$ be a non-zero function in the space $R^+ _n (\sigma)^{N_{n_K}}$. We claim that $f$ in indeed fixed by the group $I_{1, K}$. Note that $K\alpha^n I_K= K\alpha^n N_{n_K}$ for $n\geq 0$, hence the function $f$ is determined by $f(\alpha^n u)$ for all $u\in N_{n_K}$. For a $b'\in N'_{m_k}\times H_1$, we have
\begin{center}
$b'\cdot f (\alpha^n u)= f(\alpha^n u b')= f(\alpha^n b'_1 u_1)$,
\end{center}
for some $b'_1 \in N'_{m_k}\times H_1, u_1\in N_{n_K}$, where we have used $(1)$ of Lemma \ref{exchange lemma 2} for the second equality. We now simply have that (by definition and the assumption on $f$):
\begin{center}
$f(\alpha^n b'_1 u_1)=f(\alpha^n)=f(\alpha^n u)$
\end{center}
We have proved $f$ is fixed by the group $I_{1, K}\cap B'$, hence the claim. We are done.
\end{proof}

\subsection{Is $I^{+}(\sigma, \pi)\cap I^{-}(\sigma, \pi)$ is canonical ?}\label{subsection: canonical}

Let $\pi$ be an irreducible smooth representation of $G$. For a weight $\sigma$ of $K$ contained in $\pi$, we have attached to $\pi$ an $I_K$-subrepresentation $I^{+}(\sigma, \pi)\cap I^{-}(\sigma, \pi)$ and proved it is non-zero (Proposition \ref{non-vanishing of f_0}). In this subsection, we prove the following conditional result:

\begin{proposition}\label{canonicalness}
 Assume $\pi^{I_{1, K}}\subseteq I^{+}(\sigma, \pi)\cap I^{-}(\sigma, \pi)$ holds. Then the $I_K$-subrepresentation $I^{+}(\sigma, \pi)\cap I^{-}(\sigma, \pi)$ of $\pi$ does not depend on the choice of $\sigma$.
\end{proposition}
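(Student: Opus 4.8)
The plan is to show that the $I_K$-subrepresentation $I^{+}(\sigma, \pi)\cap I^{-}(\sigma, \pi)$ coincides with $\pi^{I_{1,K}}$ itself, from which independence of $\sigma$ is immediate since $\pi^{I_{1,K}}$ is intrinsic to $\pi$ and does not reference $\sigma$ at all. One inclusion is exactly the hypothesis $\pi^{I_{1,K}}\subseteq I^{+}(\sigma, \pi)\cap I^{-}(\sigma, \pi)$, so the whole burden is the reverse inclusion $I^{+}(\sigma, \pi)\cap I^{-}(\sigma, \pi)\subseteq \pi^{I_{1,K}}$. Granting this, the argument closes in one line: both sides equal $\pi^{I_{1,K}}$, which is manifestly independent of the weight $\sigma$ chosen.

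First I would set up the reverse inclusion. By Lemma \ref{the image of phi_sigma}, every element of $I^{+}(\sigma, \pi)\cap I^{-}(\sigma, \pi)$ has the form $\phi_\sigma(\overline{h})$ for some $h\in R(\sigma, \pi)$, and by construction $\phi_\sigma(\overline{h})=\overline{(h)^{-}}$, the image in $\pi$ of the $I^{-}(\sigma)$-component of $h$. The goal is thus to show that such an element is $I_{1,K}$-fixed. The natural tool is Proposition \ref{N-invariants of tree}: since the relevant projections land in the spaces $R^{-}_{n-1}(\sigma)$, and since $R^{-}_{n-1}(\sigma)^{N'_{m_K}}=\langle f_n\rangle$, I would try to argue that any vector in $I^{+}(\sigma, \pi)\cap I^{-}(\sigma, \pi)$ is built from the $I_{1,K}$-invariant functions $f_n$ modulo the kernel $R(\sigma,\pi)$. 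Concretely, Corollary \ref{f_0 and f_1 are l.d in quotient} tells us that, for a Hecke-module quotient, the image of $(\textnormal{ind}^G_K\sigma)^{I_{1,K}}$ is two-dimensional; combined with Proposition \ref{non-vanishing of f_0}, which places $\overline{f_0}$ and $\overline{f_1}$ inside $I^{+}(\sigma, \pi)\cap I^{-}(\sigma, \pi)$, one expects the intersection to be no larger than the $I_{1,K}$-fixed subspace it already contains.

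The main obstacle I anticipate is controlling the size of $I^{+}(\sigma, \pi)\cap I^{-}(\sigma, \pi)$ from above without already knowing the explicit structure of $\pi$. The inclusion $\pi^{I_{1,K}}\subseteq I^{+}(\sigma, \pi)\cap I^{-}(\sigma, \pi)$ is the hypothesis, but the reverse requires ruling out that $\phi_\sigma(\overline{h})$ can be $I_K$-stable yet fail to be $I_{1,K}$-fixed. Here I would exploit that $I^{-}(\sigma,\pi)$ is the image of $I^{-}(\sigma)$, an $I_K$-stable space, and that the $N'_{m_K}$-invariants of each $R^{-}_{n-1}(\sigma)$ are one-dimensional and spanned by $f_n\in(\textnormal{ind}^G_K\sigma)^{I_{1,K}}$ (Proposition \ref{N-invariants of tree}). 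The strategy is: any $I_K$-stable subspace of $I^{-}(\sigma,\pi)$ whose image is simultaneously in $I^{+}(\sigma,\pi)$ must, after taking $N'_{m_K}$-invariants, be spanned by images of the $f_n$, hence consist of $I_{1,K}$-invariant vectors of $\pi$; the irreducibility of $\pi$ and smoothness then force the whole intersection into $\pi^{I_{1,K}}$.

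Finally I would assemble the two inclusions. Since the reverse inclusion places $I^{+}(\sigma, \pi)\cap I^{-}(\sigma, \pi)$ inside $\pi^{I_{1,K}}$, and the hypothesis gives the opposite containment, we conclude
\begin{center}
$I^{+}(\sigma, \pi)\cap I^{-}(\sigma, \pi)=\pi^{I_{1,K}}$.
\end{center}
The right-hand side is defined purely in terms of $\pi$ and $K$, with no reference to $\sigma$, so the $I_K$-subrepresentation is independent of the choice of weight $\sigma$ contained in $\pi$, as claimed. I expect the delicate point to be justifying the upper bound via Proposition \ref{N-invariants of tree} cleanly, and I would keep Corollary \ref{f_0 and f_1 are l.d in quotient} in reserve in case the general smooth $\pi$ needs to be reduced to its spherical Hecke-module quotients first.
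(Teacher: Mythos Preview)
Your plan has a genuine gap: you are trying to prove the reverse inclusion $I^{+}(\sigma,\pi)\cap I^{-}(\sigma,\pi)\subseteq \pi^{I_{1,K}}$, but this is \emph{not} a consequence of the hypothesis and is not expected to hold for a general irreducible $\pi$. Indeed, Proposition~\ref{Hu's criteria} says that finite-dimensionality of the intersection is implied by finite presentation of $\pi$; for a supersingular $\pi$ that is not finitely presented (the expected generic situation), the intersection can be infinite-dimensional while $\pi^{I_{1,K}}$ is finite-dimensional whenever $\pi$ is admissible. So the equality $I^{+}(\sigma,\pi)\cap I^{-}(\sigma,\pi)=\pi^{I_{1,K}}$ you are aiming for simply fails in general, and the tools you invoke (Proposition~\ref{N-invariants of tree}, Corollary~\ref{f_0 and f_1 are l.d in quotient}) only control the situation for spherical Hecke modules, not for arbitrary irreducible $\pi$. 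Your argument that ``any $I_K$-stable subspace of $I^{-}(\sigma,\pi)$ \ldots\ must, after taking $N'_{m_K}$-invariants, be spanned by images of the $f_n$'' does not yield that every element of the intersection is $I_{1,K}$-fixed: being $I_K$-stable is far weaker than being pointwise $I_{1,K}$-invariant.

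The paper takes a completely different route that uses the hypothesis as a \emph{bridge between weights} rather than as one half of an equality. Via Lemma~\ref{unifom of I^+ and I^-} one has $I^{+}(\sigma,\pi)=\langle P^{+}\beta_K\,\overline{[Id,v_0]}\rangle$ and $I^{-}(\sigma,\pi)=\langle \beta_K P^{-}\beta_K\,\overline{[Id,v_0]}\rangle$ for explicit semigroups $P^{-}\subsetneq P^{+}$. Given a second weight $\sigma'$ with highest-weight vector $w_0$, the hypothesis places $\overline{[Id,w_0]}\in\pi^{I_{1,K}}$ inside $I^{+}(\sigma,\pi)\cap I^{-}(\sigma,\pi)$, so one can write $\overline{[Id,w_0]}=\beta_K Q_2\beta_K\,\overline{[Id,v_0]}$ with $Q_2\in\overline{\mathbf{F}}_p[P^{-}]$. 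Since $\beta_K^{2}=Id$ and $P^{-}\subset P^{+}$, this gives $I^{+}(\sigma',\pi)\subseteq I^{+}(\sigma,\pi)$; symmetry yields equality, and likewise for $I^{-}$. Thus the paper proves the stronger fact that $I^{\pm}(\sigma,\pi)$ themselves are independent of $\sigma$, without ever computing or bounding the intersection.
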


\begin{remark}
Our assumption on $\pi$ made in the Proposition is quite awkward. Actually in the case of $GL_2$, it is the major input Hu has arrived to prove his diagram is canonical (\cite[Proposition 3.16]{Hu12}). In our case, due to some technical reason, we are not able to prove it at this stage.
\end{remark}

Denote by $P^+$ and $P^-$ respectively the following semigroups in $G$:
\begin{center}
$P^+ := N_{n_K}\alpha^{-\mathbf{\mathbb{Z}}_{\geq 0}}, ~P^- := N_{n_K +1}\alpha^{-\mathbf{\mathbb{N}}}.$
\end{center}
Note that the semigroup $P^-$ does not contain $Id$, and it is \emph{properly contained} in $P^+$.

A simple computation using Lemma \ref{generators of sigma} on the spaces $I^+ (\sigma)= \oplus_{n\geq 0} R^+ _n (\sigma)$ and $I^- (\sigma)= \oplus_{n\geq 1} R^- _{n-1} (\sigma)$ gives that:
\begin{lemma}\label{unifom of I^+ and I^-}
$I^+ (\sigma)= [P^+ \beta_K, v_0],~ I^- (\sigma)= [\beta_K P^- \beta_K, v_0]$.
\end{lemma}

\begin{proof}[Proof of Proposition \ref{canonicalness}]

By Lemma \ref{unifom of I^+ and I^-}, we have that
\begin{center}
$I^+ (\sigma, \pi)= \langle P^+ \beta_K \overline{[Id, v_0]}\rangle_{\overline{\mathbf{F}}_p}, ~I^- (\sigma, \pi)= \langle \beta_K P^- \beta_K \overline{[Id, v_0]}\rangle_{\overline{\mathbf{F}}_p}$,
\end{center}
that is $v\in I^+ (\sigma, \pi)$ if and only if there is a $Q\in \overline{\mathbf{F}}_p [P^+]$ such that
\begin{center}
$v= Q \beta_K \overline{[Id, v_0]}$
\end{center}
Similarly, we have $v\in I^- (\sigma, \pi)$ if and only if there is a $Q\in \overline{\mathbf{F}}_p [P^-]$ such that:
\begin{center}
$v= \beta_K Q \beta_K \overline{[Id, v_0]}$
\end{center}

Now for another $\sigma'$ contained in $\pi$, let $w_0$ be a non-zero vector in the line $\sigma'^{I_{1, K}}$. Note that $\overline{[Id, w_0]} \in \pi^{I_{1, K}} \subseteq I^{+}(\sigma, \pi)\cap I^{-}(\sigma, \pi)$ by the assumption. By the preceding remarks, we find $Q_1 \in \overline{\mathbf{F}}_p [P^+]$ and $Q_2 \in \overline{\mathbf{F}}_p [P^-]$ such that:
\begin{center}
$\overline{[Id, w_0]}= Q_1 \beta_K \overline{[Id, v_0]}= \beta_K Q_2 \beta_K \overline{[Id, v_0]}.$
\end{center}

By Lemma \ref{unifom of I^+ and I^-} again,
\begin{center}
$I^+ (\sigma', \pi)= \langle P^+ \beta_K \overline{[Id, w_0]}\rangle= \langle P^+ \beta^2 _K  Q_2 \beta_K \overline{[Id, v_0]}\rangle \subseteq \langle P^+ \beta_K \overline{[Id, v_0]}\rangle $,
\end{center}
where we note that $\beta^2 _K= Id$, and $Q_2 \in \overline{\mathbf{F}}_p [P^-]\subset \overline{\mathbf{F}}_p [P^+]$. We therefore have verified one side inclusion:
\begin{center}
$I^+ (\sigma', \pi) \subseteq I^+ (\sigma, \pi)$.
\end{center}
By exchanging the role of $\sigma$ and $\sigma'$, the same argument gives the other side inclusion:
\begin{center}
$I^+ (\sigma, \pi) \subseteq I^+ (\sigma', \pi)$.
\end{center}
Hence, we have $I^+ (\sigma, \pi) = I^+ (\sigma', \pi)$.

Almost the same argument gives $I^- (\sigma, \pi) = I^- (\sigma', \pi)$. We are done.
\end{proof}

\medskip

\section{Some computation on $I^{+}(\sigma, \pi)\cap I^{-}(\sigma, \pi)$: examples}\label{section: main results}
In principle, it is hard to determine the $I_K$-subrepresentation $I^{+}(\sigma, \pi)\cap I^{-}(\sigma, \pi)$ of $\pi$, for a general $\pi$ and an underlying $\sigma$. However, when the corresponding kernel $R(\sigma, \pi)$ is known in advance, it is possible to detect it via Lemma \ref{the image of phi_sigma}. We explore such a point in this final section.

\medskip

\subsection{The case that $\pi$ is a spherical universal Hecke module}

In this part, we study the $I_K$-subrepresentation attached to a spherical universal Hecke module, i.e., a $G$-representation of the form $\textnormal{ind}^G _K \sigma /(P(T))$ for some polynomial $P$ of degree $\geq 1$. Such a $G$-representation plays a central role in the $p$-modular representation theory of $G$, but we don't know much about it in general\footnote{We do know such a representation is always infinite dimensional (\cite[Corollary 4.6]{X2016}).}. We prove an analogue of Hu's result on $GL_2 (F)$ (\cite[Proposition 3.13]{Hu12}), but our argument here is almost formal, based on the computation carried out in Section \ref{section: spheres}. We then manage to give it a canonical basis when the polynomial $P$ is linear.

\begin{proposition}\label{I-space contained in I_1 for coker(P(T))}
Let $\pi$ be the representation $\textnormal{ind}^G _K \sigma /(P(T))$, for some weight $\sigma$ of $K$, and some polynomial $P(x)$ of degree $\geq 1$. Then the inclusion $I^{+}(\sigma, \pi)\cap I^{-}(\sigma, \pi)\subseteq \pi^{I_{1,K}}$ holds.
\end{proposition}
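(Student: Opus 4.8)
The plan is to reduce the statement to a computation on the finitely many Iwahori double cosets and then read the answer off the transition formulas of Section~\ref{section: spheres}. Since $\pi=\textnormal{ind}^G_K\sigma/(P(T))$, the kernel $R(\sigma,\pi)$ of $\iota$ is exactly $P(T)(\textnormal{ind}^G_K\sigma)$, which is generated as a $\overline{\mathbf{F}}_p[G]$-module by the single function $w:=P(T)f_0$ (recall $f_0=[\mathrm{Id},v_0]$ generates $\textnormal{ind}^G_K\sigma$ over $G$, and $T$ is $G$-equivariant). By Lemma~\ref{the image of phi_sigma} it suffices to show $\phi_\sigma(gw)\in\pi^{I_{1,K}}$ for every $g\in G$. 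First I would invoke the Iwahori decomposition $G=\bigcup_{m\in\mathcal{M}}I_K m I_K$ with $\mathcal{M}=\{\alpha^n,\beta_K\alpha^n\}_{n\in\mathbb{Z}}$, write $g=i_1 m i_2$, and use that $\phi_\sigma$ is an $I_K$-map and that $\pi^{I_{1,K}}$ is $I_K$-stable (as $I_{1,K}\trianglelefteq I_K$). Since $f_0$ spans the line $R_0(\sigma)^{I_{1,K}}$ on which $I_K$ acts through a character $\chi$, one has $i_2 w=\chi(i_2)\,w$, so the whole problem collapses to proving
\[
\phi_\sigma(mw)\in\pi^{I_{1,K}}\qquad\text{for each }m\in\mathcal{M}.
\]

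For the computation I would use two facts recorded earlier: the $I_K$-stable splitting $\textnormal{ind}^G_K\sigma=I^+(\sigma)\oplus I^-(\sigma)$ makes the projection $f\mapsto f^{-}$ an $I_K$-morphism, so $\phi_\sigma(mw)=\overline{(mw)^{-}}$; and since $mw\in R(\sigma,\pi)$ we also have $\overline{(mw)^{+}}=-\overline{(mw)^{-}}$, so I am free to use whichever of $(mw)^{\pm}$ is more convenient. Writing $w=\sum_j b_j f_j$ (a finite sum, by Proposition~\ref{hecke operator formula}), I would feed each $m f_j$ into the transition Lemmas~\ref{transition} and~\ref{transition2}. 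The key structural observation is that in each of those formulas exactly one of the two components is a single function $f_{j'}$ while the other lands in the higher spheres. Concretely, for $m=\alpha^{-n}$ ($n\ge 1$) and for $m=\beta_K\alpha^{n}$ ($n\ge 0$) the component $(mw)^{-}$ is a linear combination of the $f_j$, so $\phi_\sigma(mw)=\overline{(mw)^{-}}\in\sum_j\overline{\mathbf{F}}_p\,\overline{f_j}\subseteq\pi^{I_{1,K}}$; and for $m=\alpha^{n}$ ($n\ge 0$) it is instead $(mw)^{+}$ that is such a combination, whence $\phi_\sigma(mw)=-\overline{(mw)^{+}}$ again lies in $\sum_j\overline{\mathbf{F}}_p\,\overline{f_j}\subseteq\pi^{I_{1,K}}$ by the kernel relation. (That the remaining same-sign translates $\alpha^{\pm n}f_{\pm k}$ stay inside a single half is the easy input already used in the proof of Proposition~\ref{non-vanishing of f_0}.)

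The one family not directly covered by the transition formulas, and which I expect to be the main obstacle, is $m=\beta_K\alpha^{-n}$ with $n\ge 1$. Geometrically $\beta_K\alpha^{-n}$ reflects a spread-out horoball across the base vertex, so $\beta_K\alpha^{-n}f_{-k}$ acquires genuine contributions in \emph{both} $I^+(\sigma)$ and $I^-(\sigma)$, and neither $(mw)^{+}$ nor $(mw)^{-}$ is visibly a combination of the $f_j$. Here I would abandon the ``read off a pure $f$'' strategy and prove $I_{1,K}$-invariance of $\phi_\sigma(\beta_K\alpha^{-n}w)$ directly: for $u\in I_{1,K}$ one has $u\,\phi_\sigma(\beta_K\alpha^{-n}w)=\phi_\sigma(u\,\beta_K\alpha^{-n}w)$, and writing $u=u'hu_{+}$ along the factorization $I_{1,K}=N'_{m_K}\,H_1\,N_{n_K}$, the contracting factors $u'\in N'_{m_K}$ and $h\in H_1$ are carried by $\beta_K\alpha^{-n}$ back into $I_{1,K}$ (indeed into $N_{m_K+2n}\subseteq N_{n_K}$ and $H_1$ respectively) and hence fix $w$, giving invariance for free.

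The expanding factors $u_{+}\in N_{n_K}$ are the crux, since $\beta_K\alpha^{-n}$ carries $N_{n_K}$ out of $I_{1,K}$. To control them I would exploit the invariance operators $S_K,S_-$ of Proposition~\ref{S_1 and S_2}: these are built precisely to turn a sum of $\beta_K$-translates over $N_{n_K}/N_{n_K+1}$ (resp. $N'_{m_K}/N'_{m_K+1}$) of an $I_{1,K}$-fixed vector back into an $I_{1,K}$-fixed vector, and I would express $\phi_\sigma(\beta_K\alpha^{-n}w)$ through iterated applications of $S_K$ and $S_-$ to the invariant images $\overline{f_j}$, thereby keeping everything inside $\pi^{I_{1,K}}$. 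This expanding-direction step is where the real content lies; once it is in place, combining the four families of $m$ yields $\phi_\sigma(R(\sigma,\pi))\subseteq\pi^{I_{1,K}}$, which is the asserted inclusion.
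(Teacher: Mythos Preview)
Your reduction via Lemma~\ref{the image of phi_sigma} and the Iwahori decomposition, together with the use of Lemmas~\ref{transition} and~\ref{transition2} for the families $\alpha^{n}$, $\alpha^{-n}$, and $\beta_K\alpha^{n}$, is exactly the paper's argument. The issue is your treatment of the remaining family $m=\beta_K\alpha^{-n}$ with $n\ge 1$.

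You assert that this family is ``not directly covered by the transition formulas'' and then propose a separate invariance argument via $S_K,S_-$. This is a misreading. Lemma~\ref{transition2}(2) \emph{does} cover $\beta_K\alpha^{-n}f_k$ for all $k\ge 1$, showing that $(\beta_K\alpha^{-n}f_k)^{+}$ is either $f_{-(k-n)}$ or $0$. The only thing not tabulated is $\beta_K\alpha^{-n}f_{-m}$ for $m\ge 0$, and the paper disposes of this in one line: write $\beta_K\alpha^{-n}=\alpha^{n}\beta_K$, note that $\beta_K f_{-m}$ splits along $R^{+}_{m}(\sigma)\oplus R^{-}_{m-1}(\sigma)$ with the $R^{+}_{m}$-piece supported on $u\in N_{n_K}\setminus N_{n_K+1}$ (see the proof of Lemma~\ref{action of G on tree}(3)(a)), and then apply the list in Lemma~\ref{action of G on tree}(1) to see that $\alpha^{n}$ sends both pieces into $I^{-}(\sigma)$. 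Hence $(\beta_K\alpha^{-n}f_{-m})^{+}=0$, and $(\beta_K\alpha^{-n}w)^{+}$ is again a finite combination of the $f_j$, giving $\phi_\sigma(\beta_K\alpha^{-n}w)=-\overline{(\beta_K\alpha^{-n}w)^{+}}\in\pi^{I_{1,K}}$ by exactly the same mechanism as in your other three cases. This is the content of Remark~\ref{new observation}: for every $g\in G\setminus I_K$, one of $(gw)^{\pm}$ is already a combination of the $f_j$.

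Your alternative route through $S_K,S_-$ is therefore unnecessary, and as written it is not a proof: you say you ``would express $\phi_\sigma(\beta_K\alpha^{-n}w)$ through iterated applications of $S_K$ and $S_-$'' but give no such expression, and it is not clear one exists in the form you need (those operators average over a \emph{single} layer $N_{n_K}/N_{n_K+1}$, while the expanding conjugate $\alpha^{n}\beta_K N_{n_K}\beta_K\alpha^{-n}$ sits $2n$ layers deep). The gap is real, but the fix is the one-line observation above, not a new machine.
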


\begin{proof}
As $P(T)\textnormal{ind}^G _K \sigma= \langle P(T)f_0 \rangle_G$, by Lemma \ref{the image of phi_sigma} it suffices to prove that $\phi_\sigma (g\cdot P(T)f_0)\in \pi^{I_{1,K}}$, for any $g\in G$. Recall that $\phi_\sigma$ is $I_K$-linear, and the group $I_K$ acts as a character on the function $f_0$. By the Iwahori decomposition of $G$:
\begin{center}
$G= \bigcup_{g\in \mathcal{M}} I_K g I_K$
\end{center}
where $\mathcal{M}= \{\alpha^n, \beta_K \alpha^n\}_{n\in \mathbb{Z}}$, it is enough to verify the former statement for all $g \in \mathcal{M}$.

Assume $P(x)$ is of degree $\geq 1$. By Proposition \ref{hecke operator formula}, $P(T)f_0$ is just a linear combination of the functions $\{f_k\}_{k\in \mathbb{Z}}$. Note that $P(T)f_0=  (P(T)f_0)^+ +(P(T)f_0)^-$. For $n\geq 1$, we have $\alpha^{-n} (P(T)f_0)^+\in I^+(\sigma)$. By the second list in Lemma \ref{transition}, we have
\begin{center}
$\alpha^{-n} (P(T)f_0)^- =\sum_{k\geq 1} c_k f_k  +f$
\end{center}
for some $f\in I^+ (\sigma)$. Hence, we have that
\begin{center}
$\phi_\sigma (\alpha^{-n}P(T)f_0) = \overline{\sum_{k\geq 1} c_k f_k}$,
\end{center}
which is certainly in $\pi^{I_{1, K}}$, as required. Almost the same argument using the first list in Lemma \ref{transition} gives that: for $n\geq 1$
\begin{center}
$\phi_\sigma (\alpha^n P(T)f_0)= -\overline{\sum_{k\geq 0} c_k f_{-k}}$.
\end{center}

It remains to verify that $\phi_\sigma(\beta_K\alpha^n P(T)f_0)\in \pi^{I_{1, K}}$, for any $n\in \mathbb{Z}$. But this follows from the same idea where we just need to apply the lists in Lemma \ref{transition2}. Here, we record the key details as follows:
\begin{center}
For $n\geq 0$,  we have $\phi_\sigma (\beta_K \alpha^n P(T)f_0)= \overline{\sum_{k\geq 1} c_k f_k}$.

For $n\geq 1$,  we have $\phi_\sigma (\beta_K \alpha^{-n} P(T)f_0)= -\overline{\sum_{k\geq 0} c_k f_{-k}}$.
\end{center}
Note that $\beta_K \alpha^{-n} f_{-m} \in I^- (\sigma)$ ($m\geq 0$) in the second case above (see the argument of Lemma \ref{action of G on tree}).
\end{proof}

\begin{remark}\label{new observation}
The observation underlying our argument is that, for a function $f\in (\textnormal{ind}^G _K \sigma)^{I_{1, K}}$ and a $g\in G\setminus I_K$, one of the two functions $(g\cdot f)^+$ and $(g\cdot f)^-$ (possibly zero function) is still $I_{1, K}$-invariant, even $g\cdot f$ is \textbf{not}.
\end{remark}

\begin{remark}
The representation $\pi$ considered in the Proposition is certainly finitely presented, and Proposition \ref{Hu's criteria} tells that the space $I^{+}(\sigma, \pi)\cap I^{-}(\sigma, \pi)$ is finite dimensional. When $P(T)$ satisfies some further condition, the representation $\pi$ might not be admissible. That is to say, for an arbitrary $\pi$, the space $I^{+}(\sigma, \pi)\cap I^{-}(\sigma, \pi)$ being finite dimensional does not imply the admissibility of $\pi$, which, however, might be true under the further assumption that $\pi$ is irreducible (as in the case $GL_2 (F)$, see \cite[Proposition 3.16]{Hu12}).
\end{remark}

When the polynomial $P$ is linear, we may say a little more:
\begin{theorem}\label{images of f_0 and f_1 give that of I_K}
Assume $\pi$ is the representation $\textnormal{ind}^G _K \sigma / (T-\lambda)$. Then the $I_K$-subrepresentation $I^{+}(\sigma, \pi)\cap I^{-}(\sigma, \pi)$ is two dimensional, with a basis $\{\overline{f_0}, \overline{f_1}\}$.
\end{theorem}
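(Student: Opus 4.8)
The plan is to prove the two inclusions $\langle\overline{f_0},\overline{f_1}\rangle\subseteq I^{+}(\sigma,\pi)\cap I^{-}(\sigma,\pi)$ and $I^{+}(\sigma,\pi)\cap I^{-}(\sigma,\pi)\subseteq\langle\overline{f_0},\overline{f_1}\rangle$ separately, the whole argument resting on the identification $I^{+}(\sigma,\pi)\cap I^{-}(\sigma,\pi)=\phi_\sigma(R(\sigma,\pi))$ from Lemma \ref{the image of phi_sigma}, where here $R(\sigma,\pi)=(T-\lambda)\,\textnormal{ind}^G_K\sigma$.

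First I would record that both $\overline{f_0}$ and $\overline{f_1}$ lie in the intersection and are linearly independent. From the definitions, $f_0\in R^+_0(\sigma)\subseteq I^+(\sigma)$ and $f_1\in R^-_0(\sigma)\subseteq I^-(\sigma)$, so $\overline{f_0}\in I^+(\sigma,\pi)$ and $\overline{f_1}\in I^-(\sigma,\pi)$; the complementary memberships $\overline{f_0}\in I^-(\sigma,\pi)$ and $\overline{f_1}\in I^+(\sigma,\pi)$ are precisely Proposition \ref{non-vanishing of f_0} applied with $P=T-\lambda$. Linear independence, and the fact that these two classes span the image of $(\textnormal{ind}^G_K\sigma)^{I_{1,K}}$ in $\pi$, are given by Corollary \ref{f_0 and f_1 are l.d in quotient}. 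This already yields $\langle\overline{f_0},\overline{f_1}\rangle\subseteq I^{+}(\sigma,\pi)\cap I^{-}(\sigma,\pi)$ with the left side of dimension exactly two.

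For the reverse inclusion I would exploit that $R(\sigma,\pi)=\langle(T-\lambda)f_0\rangle_G$, since $f_0$ generates $\textnormal{ind}^G_K\sigma$ and $T$ is $G$-equivariant. A general element of $R(\sigma,\pi)$ is then a linear combination of translates $g\cdot(T-\lambda)f_0$, so by $\overline{\mathbf{F}}_p$-linearity of $\phi_\sigma$ it suffices to check $\phi_\sigma\bigl(g\cdot(T-\lambda)f_0\bigr)\in\langle\overline{f_0},\overline{f_1}\rangle$ for every $g\in G$. Writing $g=i_1 m i_2$ with $m\in\mathcal{M}=\{\alpha^n,\beta_K\alpha^n\}$ and $i_1,i_2\in I_K$ via the Iwahori decomposition, and using that $f_0$ is an $I_K$-eigenvector (so that $i_2\cdot(T-\lambda)f_0$ is a scalar multiple of $(T-\lambda)f_0$) together with the $I_K$-linearity of $\phi_\sigma$, I reduce to the case $g=m\in\mathcal{M}$. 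But the values $\phi_\sigma\bigl(m(T-\lambda)f_0\bigr)$ were already computed in the proof of Proposition \ref{I-space contained in I_1 for coker(P(T))}: each is of the shape $\overline{\sum_k c_k f_k}$, hence lies in $\langle\overline{f_0},\overline{f_1}\rangle$ by Corollary \ref{f_0 and f_1 are l.d in quotient}. Since $\langle\overline{f_0},\overline{f_1}\rangle$ is the image of the $I_K$-stable space $(\textnormal{ind}^G_K\sigma)^{I_{1,K}}$ (using that $I_K$ normalizes $I_{1,K}$), it is itself $I_K$-stable, so the residual action of $i_1$ keeps us inside it; this gives the reverse inclusion.

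With the computations of $\phi_\sigma$ on $\mathcal{M}$ already available from the previous proposition, the argument is essentially formal. The one place demanding genuine care is the bookkeeping in the reduction $g=i_1 m i_2\mapsto m$: one must really invoke both that $f_0$ is an $I_K$-eigenvector and that the target span $\langle\overline{f_0},\overline{f_1}\rangle$ is $I_K$-stable, since otherwise the $I_K$-translates created by the decomposition could a priori escape the two-dimensional space. This is the main, if modest, obstacle.
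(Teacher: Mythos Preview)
Your proposal is correct and follows essentially the same route as the paper: establish $\{\overline{f_0},\overline{f_1}\}\subseteq I^+(\sigma,\pi)\cap I^-(\sigma,\pi)$ via Proposition~\ref{non-vanishing of f_0} and Corollary~\ref{f_0 and f_1 are l.d in quotient}, then for the reverse inclusion reduce via the Iwahori decomposition and the $I_K$-equivariance of $\phi_\sigma$ to computing $\phi_\sigma(m\cdot(T-\lambda)f_0)$ for $m\in\mathcal{M}$, invoking the calculations already carried out in the proof of Proposition~\ref{I-space contained in I_1 for coker(P(T))} and finishing with Corollary~\ref{f_0 and f_1 are l.d in quotient}. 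Your explicit justification of the $I_K$-stability of $\langle\overline{f_0},\overline{f_1}\rangle$ is a point the paper leaves implicit but is indeed needed, and your remark about it is apt.
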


\begin{proof}
Recall that $\overline{f_0}$ and $\overline{f_1}$ are linearly independent in $\pi$ (Corollary \ref{f_0 and f_1 are l.d in quotient}). By Proposition \ref{non-vanishing of f_0}, both $\overline{f_0}$ and $\overline{f_1}$ lie in $I^+ (\sigma, \pi)\cap I^- (\sigma, \pi)$.

As $\phi_\sigma$ is an $I_K$-map, and the group $I_K$ acts on the functions $f_k$ ($k\in \mathbb{Z}$) as characters,  to complete the proof we again reduce us to verify that, for the function $f= (T-\lambda)f_0$, the following
\begin{center}
$\phi_\sigma (g\cdot f)\in \langle \overline{f_0}, \overline{f_1}\rangle_{\overline{\mathbf{F}}_p}$
\end{center}
holds for all $g\in \mathcal{M}= \{\alpha^n, \beta_K \alpha^n\}_{n\in \mathbb{Z}}$. But the argument of last Proposition works here, and from that we only need Corollary \ref{f_0 and f_1 are l.d in quotient} to get the above claim. We have proved the other side inclusion of the statement.
\end{proof}

\medskip
\subsection{The case that $\pi$ is an irreducible principal series}\label{subsection: irreducible principal series}

For an irreducible smooth representation $\pi$, and an underlying irreducible smooth representation $\sigma$ of $K$, we have proved \emph{conditional} in subsection \ref{subsection: canonical} that the $I_K$-subrepresentation $I^+ (\sigma, \pi) \cap I^-(\sigma, \pi)$ does not depend on the choice of $\sigma$.

Nevertheless, we may determine it with ease when $\pi$ is an irreducible principal series.

\medskip
When we say $\pi$ is an \emph{irreducible principal series}, we mean it is in one of the following three cases.

$(i)$.~~$\chi\circ \text{det}$ for any character $\chi$ of $E^1$;

$(ii)$.~~$\chi\circ \text{det}\otimes St$ for any character of $E^1$, where $St$ is the Steinberg representation $\text{ind}^G _B 1 /1$.

$(iii)$.~~$\text{ind}^G _B \varepsilon$, for any character $\varepsilon$ of $B$ which does not factor through the determinant.

\begin{theorem}\label{main}
Assume $\pi$ is an irreducible principal series, containing a weight $\sigma$ of $K$. Then the $I_K$-subrepresentation $I^{+}(\sigma, \pi)\cap I^{-}(\sigma, \pi)$ is equal to $\pi^{I_{1, K}}$.
\end{theorem}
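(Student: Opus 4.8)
The plan is to establish the two inclusions $I^{+}(\sigma,\pi)\cap I^{-}(\sigma,\pi)\subseteq\pi^{I_{1,K}}$ and $\pi^{I_{1,K}}\subseteq I^{+}(\sigma,\pi)\cap I^{-}(\sigma,\pi)$ separately and then squeeze them together using the functions $f_0,f_1$. Since $\pi$ is irreducible and contains $\sigma$ with multiplicity one, $T$ acts on the line $\textnormal{Hom}_K(\sigma,\pi)$ by a scalar $\lambda$, so $\iota$ factors through a surjection $q\colon M:=\textnormal{ind}^G_K\sigma/(T-\lambda)\twoheadrightarrow\pi$ and $R(\sigma,\pi)\supseteq(T-\lambda)\textnormal{ind}^G_K\sigma$ (this is the content of Remark \ref{assumption on irreducibility}). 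Recall also that $f_0=[Id,v_0]$, so $\overline{f_0}\in\pi^{I_{1,K}}$, and that by Proposition \ref{non-vanishing of f_0} both $\overline{f_0}$ and $\overline{f_1}$ already lie in $I^{+}(\sigma,\pi)\cap I^{-}(\sigma,\pi)$. Thus $\langle\overline{f_0},\overline{f_1}\rangle_{\overline{\mathbf{F}}_p}$ is contained in both sides, and the theorem will follow once I show that this span exhausts each side.

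For the inclusion $I^{+}(\sigma,\pi)\cap I^{-}(\sigma,\pi)\subseteq\pi^{I_{1,K}}$ I would repeat the argument of Proposition \ref{I-space contained in I_1 for coker(P(T))}. By Lemma \ref{the image of phi_sigma} the left-hand side is $\phi_\sigma(R(\sigma,\pi))$, so it suffices to check $\phi_\sigma(g\cdot h)\in\pi^{I_{1,K}}$ for all $g\in G$ and all $h$ in a generating set of $R(\sigma,\pi)$. The clean reduction here is that if $h$ is $I_{1,K}$-invariant, then, because $I_{1,K}\trianglelefteq I_K$, so is $k\cdot h$ for every $k\in I_K$; combining the $I_K$-linearity of $\phi_\sigma$ with the Iwahori decomposition $G=\bigcup_{g\in\mathcal{M}}I_KgI_K$ reduces us to evaluating $\phi_\sigma(g\cdot h')$ with $g\in\mathcal{M}=\{\alpha^n,\beta_K\alpha^n\}$ and $h'$ again $I_{1,K}$-invariant, which lands in $\langle\overline{f_n}\rangle\subseteq\pi^{I_{1,K}}$ by Lemmas \ref{transition}, \ref{transition2} and Remark \ref{new observation}, verbatim as before. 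Hence everything comes down to showing that $R(\sigma,\pi)$ is generated over $\overline{\mathbf{F}}_p[G]$ by $I_{1,K}$-invariant functions. Since the generator $(T-\lambda)f_0$ is already invariant, this reduces in turn to showing that $\ker(q)\subseteq M$ is generated by its $I_{1,K}$-invariants, which by Theorem \ref{images of f_0 and f_1 give that of I_K} lie in the two-dimensional space spanned by the images of $f_0,f_1$ in $M$ and therefore lift to $\overline{\mathbf{F}}_p$-combinations of $f_0$ and $f_1$.

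For the reverse inclusion I would compute $\pi^{I_{1,K}}$ directly, case by case. In case $(i)$ the representation $\pi$ is one-dimensional, so $\pi^{I_{1,K}}=\pi$ is the line spanned by the nonzero vector $\overline{f_0}$, and the statement is immediate. In cases $(ii)$ and $(iii)$ I would use the Iwasawa decomposition $G=BK$ to get $\pi|_K$ from $\textnormal{Ind}^K_{B\cap K}\varepsilon$, whence $\pi^{I_{1,K}}$ is read off from the finite group $\Gamma_K$ via $(\textnormal{Ind}^{\Gamma_K}_{\mathbb{B}}\bar\varepsilon)^{\mathbb{U}}$ and the Bruhat decomposition $\Gamma_K=\mathbb{B}\sqcup\mathbb{B}w\mathbb{B}$; this yields dimension $2$ for a genuine principal series and dimension $1$ for the Steinberg twist. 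Matching these with $\dim_{\overline{\mathbf{F}}_p}\langle\overline{f_0},\overline{f_1}\rangle$ — which is $2$ in case $(iii)$ by Corollary \ref{f_0 and f_1 are l.d in quotient} (there $\pi\cong M$, so $\overline{f_0},\overline{f_1}$ remain independent), and $1$ in case $(ii)$ because the computed one-dimensionality forces $\overline{f_1}$ to be a multiple of $\overline{f_0}$ (possibly $\overline{f_1}=0$) — gives $\pi^{I_{1,K}}=\langle\overline{f_0},\overline{f_1}\rangle$ in every case. Together with the first inclusion this produces the chain $\langle\overline{f_0},\overline{f_1}\rangle\subseteq I^{+}(\sigma,\pi)\cap I^{-}(\sigma,\pi)\subseteq\pi^{I_{1,K}}=\langle\overline{f_0},\overline{f_1}\rangle$, forcing all three to coincide.

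The main obstacle is the generation statement underpinning the first inclusion: proving that $\ker(q)$, i.e.\ the non-generic part of the reducible spherical Hecke module $M$ arising in cases $(i)$ and $(ii)$, is cyclic on an $I_{1,K}$-fixed vector. In case $(iii)$ this is vacuous, since there $\pi\cong M$ and the inclusion is already Theorem \ref{images of f_0 and f_1 give that of I_K}; it is precisely the degenerate cases, where $R(\sigma,\pi)$ is strictly larger than $(T-\lambda)\textnormal{ind}^G_K\sigma$, that require the explicit constituent structure of $M$ (via \cite{X2016}, \cite{X2018}). A secondary, more bookkeeping-level obstacle is the finite-group computation of $\pi^{I_{1,K}}$ in the Steinberg-twist case together with the accompanying determination of whether $\overline{f_1}$ vanishes or is proportional to $\overline{f_0}$, which is exactly what makes the dimension count in the second inclusion close up.
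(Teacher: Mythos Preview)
Your strategy is the same as the paper's --- compute $\phi_\sigma$ on $I_{1,K}$-invariant generators of $R(\sigma,\pi)$ using Lemmas~\ref{transition} and~\ref{transition2} --- but the justification you give for the existence of such generators contains a genuine gap. You invoke Theorem~\ref{images of f_0 and f_1 give that of I_K} to conclude that $(\ker q)^{I_{1,K}}$ lies in the span of the images of $f_0,f_1$ in $M$; that theorem, however, computes $I^{+}(\sigma,M)\cap I^{-}(\sigma,M)$, not $M^{I_{1,K}}$, and there is no reason for these to agree (cf.\ Remark~\ref{Question}: already $R_n(\sigma)^{I_{1,K}}$ need not generate $R_n(\sigma)$, and $M$ is not known to be admissible). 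So even granting your ``main obstacle'' that $\ker q$ is generated by its $I_{1,K}$-invariants, you have not shown that those invariants lift to linear combinations of the $f_n$, which is precisely what the transition lemmas require. Your final paragraph correctly diagnoses the difficulty but does not close it.

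The paper bypasses this entirely by citing the explicit description of $R(\sigma,\pi)$ from \cite{AHHV17}: in case~$(iii)$ the kernel is exactly $(T-\lambda)\,\textnormal{ind}^G_K\sigma$, so Theorem~\ref{images of f_0 and f_1 give that of I_K} applies verbatim; in case~$(ii)$ (reducing to $\pi=St$, $\sigma=\mathrm{st}$) the kernel is $(T)\,\textnormal{ind}^G_K\sigma \oplus \langle f_0+f_1\rangle_G$, so the generators $Tf_0=f_{-1}$ and $f_0+f_1$ are \emph{already} explicit combinations of the $f_n$, and the argument of Proposition~\ref{I-space contained in I_1 for coker(P(T))} runs directly on $c_{-1}f_{-1}+c_1(f_0+f_1)$. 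This makes both of your obstacles disappear at once, and incidentally settles your secondary bookkeeping point: one finds $\overline{f_0}=-\overline{f_1}\neq 0$ in $St$. Your reverse inclusion and dimension count for $\pi^{I_{1,K}}$ are fine and match the paper.
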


\begin{proof}
The statement is trivial when $\pi$ is in case $(i)$. Assume $\pi$ is in case $(iii)$. There is a unique (up to a scalar) and explicit $G$-homomorphism $P$ from $\textnormal{ind}^G _K \sigma$ to $\pi$ with kernel $(T-\lambda)$ for some scalar $\lambda$ (\cite{AHHV17}). Therefore, we only need to understand $\phi_\sigma ((T-\lambda))$. Now the argument of Theorem \ref{images of f_0 and f_1 give that of I_K} works completely the same here, so the former space is spanned by the vectors $P(\overline{f_0})$ and $P(\overline{f_1})$, which is nothing but the two dimensional subspace of $I_{1, K}$-invariants of $\pi$. Note that the $\sigma$ is chosen arbitrarily underlying $\pi$.

When $\pi$ is in case $(ii)$, we may assume $\chi$ is trivial, i.e., $\pi$ is $St$. There is also an explicit and unique (up to scalar) $G$-homomorphism $P$:
\begin{center}
$P: \textnormal{ind}^G _K  \text{st} \rightarrow St$,
\end{center}
with kernel $(T)\oplus \langle f_0+ f_1 \rangle_{\overline{\mathbf{F}}_p}$(\cite{AHHV17}), where $st$ is the Steinberg weight of $K$. We know the non-zero vector $P(\overline{f_0})$ generates the unique line $St^{I_{1, K}}$. Certainly we have that $P(\overline{f_0})\in I^+(st, St)\cap I^-(st, St)$. It suffices to verify the following, where we note that $Tf_0= f_{-1}$ ($(1)$ of Proposition \ref{hecke operator formula} and Remark \ref{value of lambda}) in this case:
\begin{center}
$\phi_{st}(g (c_{-1}f_{-1}+c_1 (f_0+f_1)))\in \langle P(\overline{f_0})\rangle_{\overline{\mathbf{F}}_p},$
\end{center}
for any $c_{-1}, c_1\in \overline{\mathbf{F}}_p$, and for all $g\in \mathcal{M}= \{\alpha^n, \beta_K \alpha^n\}_{n\in \mathbb{Z}}$. As in Theorem \ref{images of f_0 and f_1 give that of I_K}, we may apply the argument of Proposition \ref{I-space contained in I_1 for coker(P(T))}, and the claim follows from Corollary \ref{f_0 and f_1 are l.d in quotient}, where we note $\overline{f_0}= -\overline{f_1}$ in the current case. We are done.
\end{proof}

\begin{remark}\label{diagram: definition}
Let $\pi$ be a smooth representation of $G$. Take an irreducible smooth representation $\sigma$ of $K_0$ underlying $\pi$. We may then attach a diagram $\mathcal{D}(\pi, \sigma)$ (\cite[6.2]{Karol-Peng2012}) to $\pi$ as follows:
\begin{center}
$(D_0, D_1, I^+ (\sigma, \pi)\cap I^- (\sigma, \pi), r_0, r_1),$
\end{center}
in which $D_i$ is the $K_i$-subrepresentation of $\pi$ generated by $I^+ (\sigma, \pi)\cap I^- (\sigma, \pi)$, and $r_i$ is the inclusion map from $I^+ (\sigma, \pi)\cap I^- (\sigma, \pi)$ to $D_i$ ($i=0, 1$).

When $\pi$ is an irreducible principal series, based on Theorem \ref{main} we may prove:
\begin{center}
$\mathcal{D}(\pi, \sigma)= (\pi^{K^1 _0}, \pi^{K^1 _1}, \pi^{I_{1, K_0}}, r_0, r_1)$.
\end{center}

\end{remark}

\section*{Acknowledgements}
Part of this paper was initiated from the author's PhD thesis (\cite{X2014}) at University of East Anglia, and our debt owned to the beautiful work of Yongquan Hu (\cite{Hu12}) should be very clear to the readers. The author was supported by a Postdoc grant from Leverhulme Trust RPG-2014-106 and European Research Council project 669655.
\medskip

\bibliographystyle{amsalpha}
\bibliography{new}

\texttt{Einstein Institute of Mathematics, HUJI, Jerusalem, 9190401, Israel}

\emph{E-mail address}: \texttt{Peng.Xu@mail.huji.ac.il}

\end{document}